\newcommand{\ds}{\displaystyle}
\let\subset\subseteq
\let\supset\supseteq
\def\cc#1{\overline{#1}}
\def\oV{\overline{V}}
\def\oW{\overline{W}}
\def\one{\overline{1}}
\def\Aplus{A^{+}}
\def\Bplus{B^{+}}
\def\Astar{A^{\star}}
\def\Bstar{B^\star}
\def\Aomega{A^{\omega}}
\def\Bomega{B^{\omega}}
\def\Finw{F^{-1}_{w_{[0,n)}}}
\def\er{\mathbb{R}}
\def\zet{\mathbb{Z}}
\def\en{\mathbb{N}}
\def\Er{\overline{\mathbb{R}}}
\def\ex{\mathbb{X}}
\def\ce{\mathbb{C}}
\def\Ce{\overline{\mathbb{C}}}
\def\We{{\mathcal W}}
\def\Ve{{\mathcal V}}
\def\Te{{\mathbb{T}}}
\def\De{{\mathbb{D}}}
\def\id{\mathop{\mathrm{id}}}
\def\Int{\mathop{\mathrm{Int}}}
\def\Tr{\mathop{\mathrm{Tr}}}
\def\el{{\mathcal{L}}}
\def\dd{{\mathrm{d}}}
\def\dot#1{{#1}^{\bullet}}
\let\epsilon\varepsilon
\def\picture#1#2#3{\begin{figure}
\begin{center}
\includegraphics{#2}
\caption{#1}
\def\tmp{#3}
\ifx\tmp\empty\else\label{#3}\fi
\end{center}
\end{figure}}
\def\zoomedpicture#1#2#3#4{\begin{figure}
\begin{center}
\includegraphics[width=#4]{#2}
\caption{#1}
\def\tmp{#3}
\ifx\tmp\empty\else\label{#3}\fi
\end{center}
\end{figure}}
\newtheorem{theorem}{Theorem}
\newtheorem*{thmCl}{Theorem \ref{thmClosedCover1}}
\newtheorem{lemma}[theorem]{Lemma}
\newtheorem{corollary}[theorem]{Corollary}
\newtheorem{observation}[theorem]{Observation}
\theoremstyle{definition}
\newtheorem{definition}[theorem]{Definition}
\newtheorem{example}[theorem]{Example}
\newenvironment{remark}{\bgroup\smallskip\noindent{\bf Remark.}\rm
}{\smallskip\egroup}
\author{Alexandr Kazda}
\title{Properties of Möbius number systems}
\begin{document}
\maketitle
\begin{abstract}
Möbius number systems represent points using sequences of Möbius
transformations. Thorough the paper, we are mainly interested in representing
the unit circle (which is equivalent to representing $\er\cup\{\infty\}$).

The main aim of the paper is to improve already known tools for proving that
a given subshift--iterative system pair is in fact a Möbius number
system. We also study the existence problem: How to describe iterative systems
resp. subshifts for which there exists a subshift resp. iterative system such
that the resulting pair forms a Möbius number system. While we were unable to
provide a complete answer to this question, we present both positive and
negative partial results. 

As Möbius number systems are also subshifts, we can ask when a given Möbius
number system is sofic. We give this problem a short treatment at the end of
our paper.

\noindent {\bf Keywords:} Möbius transformation, numeral system, subshift

\end{abstract}

\section*{Introduction}
\addcontentsline{toc}{section}{Introduction}

Numeral systems are recipes for expressing numbers in symbols. The most common
are positional systems (usually with base ten, two, eight or sixteen). However, these
systems are by no means the only possibility. Various historical systems used
different approaches (consider for example the Roman numerals). The reason for
positional systems' eventual dominance is the ease with which we can perform
basic arithmetic operations on numbers by manipulating their symbolic
representations in a positional system (compared to the tedious task that is
arithmetic done in, say, Roman numerals).

Modern numeration theorists typically study positional systems with real base
(such as the golden mean or $-2$) or various modifications of continued
fractions. Contemporary numeration theory has connections to various other fields,
namely the study of fractals and tilings, symbolic dynamics, ergodic theory,
computability theory and even cryptography.

In this paper, we study M\"obius number systems as introduced in \cite{Kurka}.
A Möbius number system represents numbers as sequences of Möbius transformations
obtained by composing a finite starting set of Möbius transformations. 

Möbius number systems display complicated dynamical properties and have
connections to other kinds of numeral systems. In particular,
Möbius number systems can generalize continued fractions (see \cite{Kurka2}).

The paper is organized as follows: 
First, Section~\ref{chapPrel} prepares the ground for our study, introducing
known results on disc preserving Möbius transformations.

In Section~\ref{chapConv}, we present conditions for deciding whether a given
sequence of Möbius transformations represents a number. Originally,
representation was defined by convergence of measures, but it turns out that
there are several other definitions. We collect these definitions in
Section~\ref{secDefinitions}.

Section~\ref{chapMNS} introduces Möbius number systems and contains
most of this paper' results:
In Section~\ref{secIntervalSystems} we offer tools to prove (or disprove) that
a given iterative system with a given subshift is a Möbius number system. While
these tools are far from universal, they are sufficient for  practical
purposes, as we show on three example number systems. 

In Section~\ref{secQ}, we look at one particular tool, the numbers
$Q_n(\We,\Sigma)$, and ask what can these numbers tell us about Möbius number
systems. It turns out that if $Q_n(\We,\Sigma)$ are small enough then the
corresponding shift $\Sigma_\We$ either is not a Möbius number system or is
rather badly behaved.
 
An interesting question is whether a given iterative system admits any Möbius
number system at all. It would be quite useful to have a complete characterization of
such systems, unfortunately there is still a gap between the sufficient and the
necessary conditions that are available. In Section~\ref{secExistence} we offer
a slight improvement of an already known sufficient condition and observe a new
necessary condition. 

In Section~\ref{secData}, we present the findings of a computer experiment
based on the theory of Section~\ref{secExistence}. The results of this
experiment suggest that the set of Möbius iterative systems admitting a Möbius
number system is, up to a small error, equal to the complement of the set of
iterative systems having nontrivial inward set. We conjecture that this
is true in general, but can not offer a proof.

In Section~\ref{secSubshifts} we consider the question ``Which subshifts can be
Möbius number systems for a suitable Möbius iterative system?'' While we don't
know the full answer, we show that there is a nontrivial class of subshifts
that can not be Möbius number systems.

Proposition 5 in~\cite{Kurka2} offers a sufficient condition for a number
system to be a subshift of finite type. We take off in a similar direction in
Section~\ref{secSofic} and
conclude Section~\ref{chapMNS} by giving conditions for a Möbius number system
to be sofic. In particular, under some reasonable assumptions on the system
$\Sigma_\We$, we have a sufficient and necessary condition for $\Sigma_\We$ to
be sofic.

Finally, in the Appendix one can find various proofs of results belonging in
the folklore of the theory of Möbius transformations, symbolic dynamics or (in
one case) the theory of measure.

\section{Preliminaries}\label{chapPrel}

\subsection{Metric spaces and words}
Denote by $\Te$ the unit circle and by $\De$ the closed unit disc in the
complex plane. For $x,y\in \Te$, $x\neq y$ denote by $(x,y)$ resp. $[x,y]$ 
the open resp. closed interval obtained by
going from $x$ to $y$ along $\Te$ in the positive (counterclockwise) direction. To make notation more convenient, we define the sum $x+l$ for
$x\in \Te$ and $l\in\er$ as the point on $\Te$ whose argument is equal to $l+\arg x$ modulo $2\pi$.

Let $A$ be a finite alphabet. Any sequence of elements of $A$ is a \emph{word}
over $A$. Let $\lambda$ be the empty word. Denote by $\Astar$ the monoid of all
finite words over $A$, by $\Aplus$ the set $\Astar\setminus\{\lambda\}$ and by
$\Aomega$ the set of all one-sided infinite words over $A$. Let $|w|$ denote
the length of the word $w$. If $n$ is finite, let $A^n$ be the set of all words
over $A$ of length precisely $n$. We use the notation $w=w_0w_1w_2\cdots$ and
$w_{[i,j]}=w_iw_{i+1}\cdots w_{j}$. When $u$ is a finite word and $v$ any word
we can define the \emph{concatenation} of $u$ and $v$ as $uv=u_0u_1\dots
u_{|u|-1}v_0v_1\dots$

Let
$v\in\Astar$ be a word of length $n$. Then we write
$[v]=\{w\in\Aomega:w_{[0,n-1]}=v\}$ and call the resulting subset of $\Aomega$ the
\emph{cylinder} of $v$.
A word $u$ is a \emph{factor} of a word $v$ if there exist $i,j$ such that
$u=v_{[i,j]}$.

Let $X$ be a metric space. We denote by $\rho$ the metric function of
$X$, by $\Int(V)$ the interior of the set $V$ and by $B_r(x)$ the open ball of radius
$r$ centered at $x$. If $I$ is an interval, denote by $|I|$ the length of $I$.

We equip $\ce$ with the metric $\rho(x,y)=|x-y|$ and $\Te$ with the circle
distance metric (i.e. metric measuring distances along the circle). The shift
space $\Aomega$ of one-sided infinite words comes equipped with the metric
$\rho(u,v)=\max\left(\{2^{-k}:u_k\neq v_k\}\cup \{0\}\right)$. It is easy to
see that the topology of $\Aomega$ is the product topology. A \emph{subshift}
$\Sigma\subset\Aomega$ is a set that is both topologically closed and invariant
under the \emph{shift map} $\sigma(w)_{i}=w_{i+1}$ (i.e.
$\sigma(\Sigma)\subset\Sigma$).

As shown in \cite[pages 5 and 179]{Marcus}, subshifts of $\Aomega$ are precisely the subsets of
$\Aomega$ that can be defined by some set of forbidden factors. More precisely,
$\Sigma$ is a subshift iff there exists $F\subset \Aplus$ such that
\[
\Sigma=\{w\in\Aomega:\forall v\in F,\, \hbox{$v$ is not a factor of $w$}\}.
\]
We are going to occasionally define shifts using some such set of forbidden
factors.

The \emph{language of a subshift} $\el(\Sigma)$ is the set of all the words
$v\in\Astar$ for which there exists $w\in\Sigma$ such that $v$ is a factor of
$w$. See \cite{Marcus} for a more detailed treatment of this topic.
 
We will mainly consider symbolic representations of $\Te$, although 
representations of the 
extended real line $\Er=\er\cup \{\infty\}$ will make an appearance as
well. Note that $\Er$ 
is homeomorphic to $\Te$ via the stereographic projection (see
Figure~\ref{fig-stereo}):
\picture{The stereographic projection of $\Te$ onto
$\Er$}{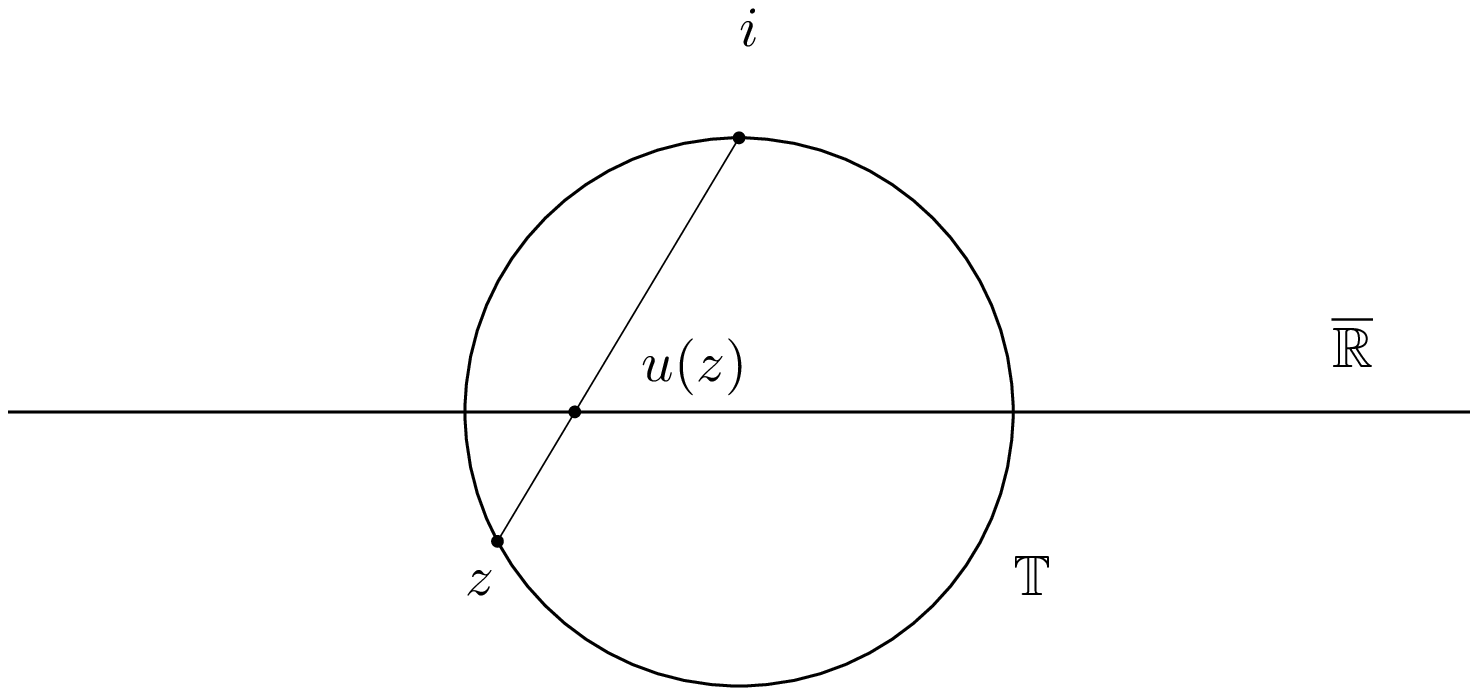}{fig-stereo}
$$
u:\Te \to \Er,\quad u:z \mapsto \frac{-iz+1}{z-i}.
$$
Therefore, as long as we are not interested in arithmetics,
representing $\Te$ is equivalent to representing the extended real line.

\subsection{Möbius transformations}

A Möbius transformation (MT for short) of the complex sphere
$\Ce=\ce\cup\{\infty\}$ is any map of the
form 
$$
F:z\mapsto\frac{az+b}{cz+d}
$$
where $(a,b),(c,d)$ are linearly independent vectors from $\ce^2$.

Note that the stereographic projection $u$ as defined above is 
actually a Möbius transformation. Therefore, if we represent $\Te$ using the
system $\{F_a:a\in A\}$ of MTs, we can represent $\Er$ in the same way with the
system $\{u\circ F_a\circ u^{-1}:a\in A\}$.

To every
regular $2\times 2$ complex matrix 
$
A=\left(\begin{smallmatrix}
a&b\\
c&d\\
\end{smallmatrix}\right)
$ we can associate the MT defined by $F_A(z)=\frac{az+b}{cz+d}$. While the map $A\mapsto
F_A$ is surjective, every MT has many preimages: if $A$ is a
matrix for $F$ then so is $c A$ for any $c\in\ce,\,c\neq 0$. Even
normalizing the matrices by demanding $\det A=1$ is not enough, as it leaves
two preimages $A$ and $-A$ for each $F$. 

This ambiguity is, however, a small price to pay: An easy calculation 
shows that composition of MTs corresponds to multiplying their respective matrices: 
$F_A\circ F_B=F_{A\cdot B}$. This is why we will often think of MTs as of
matrices. It
follows that the set of all MTs together with the operation of composition is a
group (isomorphic to $SL(2,\ce)/\{E,-E\}$). In particular, MTs are bijective on
$\ce\cup\{\infty\}$.

Usually, we will consider \emph{disc preserving} Möbius transformations, i.e.
transformations that map $\De$ onto itself.
Obviously, disc preserving transformations form a subgroup of the group of all MTs. It
turns out that $F$ is disc preserving iff it has the form
$$F=\begin{pmatrix}
\alpha&\beta\\
\overline\beta&\overline\alpha\\
\end{pmatrix}$$
with the normalizing condition $|\alpha|^2-|\beta|^2=1$. We will provide
the proof in the next section as Lemma~\ref{lemCPform}.

The geometrical theory of MTs is quite rich and has a strong link to
hyperbolic geometry (see \cite{Katok}). In this paper, we will need only a
handful of basic fragments of this theory. We will use the fact that MTs take
circles and lines to circles and lines (possibly turning a circle into a line
or vice versa) and the observation that disc preserving transformations also
preserve orientation of intervals on the circle (clockwise versus
counterclockwise), so the image of the interval $[x,y]$ is the interval
$[F(x),F(y)]$ (as compared to $[F(y),F(x)]$).

We can establish a taxonomy of disc preserving MTs by considering the trace
of the normalized matrix representing $F$. While $\Tr F$ does not have a well
defined sign, the number $(\Tr F)^2$ is unique and real for each disc preserving $F$.

\begin{definition}\label{defMT}
Let $F\neq \id$ be a disc preserving MT. We call $F$:
\begin{enumerate}
\item \emph{elliptic} if $(\Tr F)^2<4$,
\item \emph{parabolic} if $(\Tr F)^2=4$,
\item \emph{hyperbolic} if $(\Tr F)^2>4.$
\end{enumerate}
\end{definition}

To better understand this classification, consider the fixed points of $F$. We
claim (Lemma~\ref{lemFixedpoints} in the Appendix) that:
\begin{enumerate}
\item $F$ is elliptic iff it has one fixed point inside and one fixed point
outside of $\Te$ (the outside point might be $\infty$),
\item $F$ is parabolic iff it has a single fixed point which lies on $\Te$,
\item $F$ is hyperbolic iff it has two distinct fixed points, both lying on $\Te$.
\end{enumerate}

\begin{remark}
Let $F$ be a hyperbolic transformation with fixed points $x_1,x_2$. Then one of
these points (say, $x_1$) is \emph{stable} and the other is \emph{unstable}. It
is $F'(x_1)<1<F'(x_2)$ and for every $z\in\Ce, z\neq x_2$, we have 
$\lim_{n\to\infty} F^n(z)=x_1$.

Similarly, when $F$ is parabolic with the fixed point $x$, we have $F^n(z)\to x$
for all $z\in\Ce$ and $F'(x)=1$. See Lemmas~\ref{lemFixedPoints1} and
\ref{lemFixedPoints2} in the Appendix for proofs of these facts.
\end{remark}

We will show the significance of this classification in the following sections. 

\subsection{Number representation}

Möbius number systems assign numbers to sequences of mappings. This principle
is actually less exotic than it appears to be. Consider the usual binary
representation of the interval $[0,1]$. Let $A=\{0,1\}$ be our alphabet.
We want to assign to each word $w\in\Aomega$ the number
$\Phi(w)=0.w$ and so obtain the map $\Phi:\Aomega\to[0,1]$. We need to use
some sort of limit process: Taking longer and longer
prefixes of $w$, we obtain better and better approximations, ending with the
unique number $0.w$. 

The usual construction of the binary system involves letting 
$\Phi(w)$ to be equal to the limit of the sequence
$\{0.w_{[0,k)}\}_{k=1}^\infty$. However, we can also
define binary numbers in the language of mappings.

Consider the two maps 
\begin{eqnarray*}
F_0:x&\mapsto& x/2\\
F_1:x&\mapsto& (x+1)/2.
\end{eqnarray*}
For
$v\in A^n$ let $F_v=F_{v_0}\circ F_{v_1}\circ\dots\circ F_{v_{n-1}}$. Both maps
$F_0, F_1$ are continuous and, more importantly, contractions on the interval
$[0,1]$: For each $x,y\in[0,1]$ and each $i=0,1$ we have
$|F_i(x) - F_i(y)|=\frac12 |F_i(x)-F_i(y)|$. Therefore, for any $w\in\Aomega$,
the set $\bigcap_{k=1}^\infty F_{w_{[0,k)}}[0,1]$ is a singleton. What is more,
a proof by induction reveals that $F_{w_{[0,k)}}[0,1]$ is actually
precisely the set of all the real numbers whose binary expansion begins with
$0.w_{[0,k)}$. We have obtained that $\bigcap_{k=1}^\infty
F_{w_{[0,k)}}[0,1]=\{\Phi(w)\}=\{0.w\}$. If we wished, we could go on
to prove that $\Phi$ is continuous and surjective, both very desirable
properties for a number system.

We would like to do the same for Möbius transformations in place of $F_0,F_1$
and call the result a Möbius number system. However, as MTs are bijective on
the complex sphere, we cannot use the contraction property like we did above.
To fix this, \cite{Kurka} defined $\Phi$ using convergence of measures. We will
see that there are other (equivalent) definitions in
Theorem~\ref{thmCharacteriseConvergence} but let us give the original
definition first.

Denote $m(\Te)$ the set of all Borel probability measures on $\Te$.
If $\nu$ is a Borel
measure on $\Te$ and $F:\Te\to\Te$ an MT, we define the measure $F\nu$ by
$F\nu(E)=\nu(F^{-1}(E))$ for all measurable sets $E$ on $\Te$. The Dirac
measure centered at point $x$ is the measure $\delta_x$ such that 
\[
\delta_x(E)=
\begin{cases}
1&\text{if $x\in E$}\\
0&\text{otherwise}
\end{cases}
\]
for any $E$ measurable subset of $\Te$. It is a quite straightforward idea to identify
$\delta_x$ with the point $x$ itself.

Before we define what does it mean for a sequence of MTs to represent a point,
let us give some brief background. Denote by $C(\Te,\er)$ the vector space of all
continuous functions from $\Te$ to $\er$ (with the supremum norm). Finite Borel
measures act on $C(\Te,\er)$ as continuous linear functionals: Measure $\nu$
assigns to $f\in C(\Te,\er)$ the number $\int f \dd\nu$ and if $\nu\neq\nu'$
then the two measures define different functionals by the Riesz representation
theorem (see \cite[page 184]{ash}). 

We have the embedding $m(\Te)\subset C(\Te,\er)^*$, where
$C(\Te,\er)^*$ is the dual space to $C(\Te,\er)$. 
There are three usual topologies on $C(\Te,\er)^*$ (listed in the order of
strength): The norm
topology, the weak topology and the weak* topology. 
\begin{definition}\label{defRep}
Denote by $\mu$ the uniform probability measure on $\Te$. 
Let $\{F_n\}_{n=1}^\infty$ be a sequence of Möbius transformations. We say that
the sequence $\{F_n\}_{n=1}^\infty$ represents the point $x\in\Te$ if and only if
$\lim_{n\to\infty} F_n\mu=\delta_x$. Here $\mu$ is the uniform probability
measure on $\Te$ and the
convergence of measures is taken in the weak$^*$ topology, i.e. $\nu_n\to\nu$ if and only if
for all $f:\Te\to\er$ continuous we have $\int f\dd\nu_n\to \int f\dd\nu$.
\end{definition}

\begin{remark}
The reader might wonder why did we choose weak$^*$ topology here instead of any
the two other common topologies. 

One answer is that this is the usual way to
define convergence of measures in fields such as ergodic theory. Another answer
is that even weak topology is too strong to provide any representation of
points at all: Consider any sequence $\{F_n\}_{n=1}^\infty$ of MTs. To obtain
$\lim_{n\to\infty} F_n\mu=\delta_x$ in the weak topology, we would have to
satisfy $\alpha(F_n\mu)=\alpha(\delta_x)$ for any continuous linear functional
$\alpha\in C(\Te,\er)^{**}$.

By the Riesz representation theorem, the space $C(\Te,\er)^*$ can be identified
with the space of all Radon signed measures on $\Te$. For $\lambda$ Radon
signed measure on $\Te$, define $\alpha(\lambda)=\lambda(\{x\})$. This is
a continuous linear functional on $C(\Te,\er)^*$ (see Lemma~\ref{lemContinuity} in the Appendix).
 Obviously,
$\alpha(\delta_x)=1$, while $\alpha(F_n \mu)=\mu (\{F_n^{-1}(x)\})=0$, so the
sequence $\{F_n\}_{n=1}^\infty$ does not represent $x$. As this is true for all
sequences and all values of $x$, Definition~\ref{defRep} would be meaningless in
the weak topology and the same is true in the norm topology (which is even
stronger that the weak topology).
\end{remark}

\section{Representing points using Möbius transformations}
\label{chapConv}
\subsection{General properties of Möbius transformations}

In this subsection, we point out several useful properties of disc preserving MTs
as well as various equivalent descriptions of what does it mean for a sequence
of MTs to represent a point on $\Te$.

We begin by fulfilling a promise from Preliminaries:

\begin{lemma}\label{lemCPform}
A Möbius transformation $F$ is disc preserving (i.e. $F(\De)=\De$) iff it is of
the form 
\[
F=\begin{pmatrix}
\alpha&\beta\\
\cc\beta&\cc\alpha\\
\end{pmatrix},
\] where $|\alpha|^2-|\beta|^2=1$.
\end{lemma}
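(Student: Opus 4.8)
I would prove the two implications separately, beginning with the easy (sufficiency) direction. Suppose $F=\begin{pmatrix}\alpha&\beta\\ \overline\beta&\overline\alpha\end{pmatrix}$ with $|\alpha|^2-|\beta|^2=1$. First I would check that $F$ maps $\Te$ into $\Te$: for $|z|=1$ a direct computation of $|F(z)|^2=F(z)\overline{F(z)}$ using $z\overline z=1$ shows that the numerator $(\alpha z+\beta)(\overline\alpha\,\overline z+\overline\beta)$ and the denominator $(\overline\beta z+\overline\alpha)(\beta\overline z+\alpha)$ coincide (on $|z|=1$ both equal $|\alpha|^2+|\beta|^2+\alpha\overline\beta z+\overline\alpha\beta\overline z$, having the same constant part and the same cross terms), so $|F(z)|=1$. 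Since $|\alpha|^2-|\beta|^2=1>0$ forces $|\alpha|>|\beta|$, the point $F(0)=\beta/\overline\alpha$ satisfies $|F(0)|=|\beta|/|\alpha|<1$, so $F(0)\in\Int\De$. As $F$ is a homeomorphism of $\Ce$ carrying $\Te$ onto $\Te$, it either preserves or swaps the two components of $\Ce\setminus\Te$; the fact that the interior point $0$ maps to the interior point $F(0)$ pins down the former, whence $F(\De)=\De$.

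For the converse, assume $F(\De)=\De$; then $F(\Te)=\Te$. My plan is to exploit the reflection $j(z)=1/\overline z$, which fixes $\Te$ pointwise. Because $F$ maps $\Te$ to $\Te$, the map $jFj$ (a composition of two orientation-reversing maps, hence again a M\"obius transformation) agrees with $F$ on all of $\Te$, and two M\"obius transformations coinciding on infinitely many points are equal, so $jFj=F$. Writing $F=F_M$ for $M=\begin{pmatrix}a&b\\ c&d\end{pmatrix}$, a short calculation gives $jFj=F_N$ with $N=\begin{pmatrix}\overline d&\overline c\\ \overline b&\overline a\end{pmatrix}$. Since $M$ and $N$ represent the same transformation, they are proportional: $N=\lambda M$ for some $\lambda\in\ce\setminus\{0\}$. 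Comparing entries ($\overline d=\lambda a$, $\overline a=\lambda d$, $\overline b=\lambda c$, $\overline c=\lambda b$) and using that $M$ is invertible (so its rows do not vanish) yields $|\lambda|=1$, say $\lambda=e^{2i\phi}$.

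It then remains to normalize. Rescaling $M$ by $\mu=re^{i\phi}$ (any $r>0$) turns the relations $c=\overline\lambda\,\overline b$, $d=\overline\lambda\,\overline a$ into exactly the required symmetry $\mu c=\overline{\mu b}$, $\mu d=\overline{\mu a}$, so the rescaled matrix has the form $\begin{pmatrix}\alpha&\beta\\ \overline\beta&\overline\alpha\end{pmatrix}$ with $\alpha=\mu a$, $\beta=\mu b$, and its determinant $|\alpha|^2-|\beta|^2$ is automatically real. The one genuinely substantive point — and the place where the hypothesis $F(\De)=\De$ (rather than $F$ merely interchanging the disc with its exterior) is used — is fixing the \emph{sign} of this determinant: since $0\in\Int\De$ must map into $\Int\De$, we get $|F(0)|=|\beta|/|\alpha|<1$, hence $|\alpha|>|\beta|$ and $|\alpha|^2-|\beta|^2>0$. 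Finally, choosing $r$ appropriately (which scales $|\alpha|^2-|\beta|^2$ by $r^2$) makes $|\alpha|^2-|\beta|^2=1$. The main obstacle is thus not the algebra but this orientation/sign bookkeeping, including the degenerate configurations (e.g. $\alpha=0$, forcing $F(0)=\infty$) which correspond precisely to the disc-swapping maps and are excluded by the hypothesis.
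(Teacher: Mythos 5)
Your proof is correct, but your argument for the necessity direction takes a genuinely different route from the paper's. The sufficiency direction is essentially identical in both (check $|F(z)|=1$ on $\Te$, then use $|F(0)|=|\beta|/|\alpha|<1$ to pin down which component of $\Ce\setminus\Te$ is preserved). For the converse, the paper works directly with the functional equation $|ae^{i\phi}+b|=|ce^{i\phi}+d|$: it extracts $|a|+|b|=|c|+|d|$ and $\bigl||a|-|b|\bigr|=\bigl||c|-|d|\bigr|$ by maximizing and minimizing over $\phi$, matches arguments, and then rules out the singular configuration $|a|=|c|,|b|=|d|$ by a case analysis, finally using $\det F=1$ to force the phase $\psi\in\{0,\pi\}$ and disc preservation to exclude $\psi=\pi$. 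You instead conjugate by the circle inversion $j(z)=1/\overline{z}$, note that $jFj$ and $F$ agree on all of $\Te$ (hence everywhere, since a M\"obius transformation is determined by three points), and deduce that the matrix $N=\left(\begin{smallmatrix}\overline{d}&\overline{c}\\ \overline{b}&\overline{a}\end{smallmatrix}\right)$ of $jFj$ is proportional to $M$, from which the symmetric form drops out after a rescaling by $\mu=re^{i\phi}$. Your reflection argument is more conceptual and shorter: it avoids the paper's max/min analysis and its somewhat delicate argument-matching step, at the cost of invoking two standard facts (an MT agreeing with another on infinitely many points equals it; matrices inducing the same MT are scalar multiples), both of which are consistent with machinery the paper already uses. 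The paper's computation is more elementary and self-contained. Crucially, both proofs use the hypothesis $F(\De)=\De$ (rather than mere circle preservation) in exactly the same place and the same way --- to fix the sign of the real determinant via $|F(0)|<1$, excluding the disc-swapping maps --- and you identify this correctly as the substantive point. One cosmetic slip: $jFj$ is a composition of three maps, not two; what matters (and what your explicit computation of $N$ establishes) is that the two anti-holomorphic factors cancel, leaving a genuine M\"obius transformation.
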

\begin{proof}
Let $F$ have the given form. We prove that then $F(\De)=\De$. First,
consider $z=e^{i\phi}$. We have:
\[
\left|\cc\beta e^{i\phi}+\cc\alpha\right|=
\left|(\cc{\alpha e^{i\phi}+\beta}) e^{i\phi}\right|=
\left|\alpha e^{i\phi}+\beta\right|.
\]
And so 
\[
|F(e^{i\phi})|=\frac{\left|\alpha e^{i\phi}+\beta\right|}{\left|\cc\beta
e^{i\phi}+\cc\alpha\right|}=1.
\]
Therefore $F(\Te)\subset \Te$. Because $F$ is an MT, the image of $\Te$ must be a
circle, so $F(\Te)=\Te$. The unit circle divides $\Ce$ into two components: The
inside (containing zero) and the outside (containing $\infty$). As $F$ is a 
bijection on $\Ce$, all we have to do to obtain $F(\De)=\De$ is prove
that $F(0)$ lies inside $\De$ . But this is simple:
$F(0)=\frac{\beta}{\cc\alpha}$ and $|\beta|<|\alpha|$, so $|F(0)|<1$.

On the other hand, consider any disc preserving MT
$F=\left(\begin{smallmatrix}
a&b\\
c&d\\
\end{smallmatrix}\right)$ where $\det F=1$. Because $F$ is continuous, it must be $F(\Te)=\Te$.
Therefore, for every $\phi$, we must have
$\left|a e^{i\phi}+b\right|= \left|c e^{i\phi}+d\right|.$
A little thought gives us that if $a=0$ then $d=0$ and similarly $b=0$ implies
$c=0$; in both cases we are done. Assume $a,b,c,d\neq 0$ and continue.

Choose $\phi$ so that the quantity 
$\left|a e^{i\phi}+b\right|= \left|c e^{i\phi}+d\right|$
is maximal. The maximal value of the
function on the left side is $|a|+|b|$, on the right side $|c|+|d|$, thus
$|a|+|b|=|c|+|d|$.
Similarly, by choosing the minimal quantity, we obtain that
$||a|-|b||=||c|-|d||$. Moreover, as $\phi$ is the same 
on the right and left, we also have $\arg a-\arg b = \arg c-\arg d$. These
three equalities will be enough to complete the proof.

Assume for a moment that the equality $||a|-|b||=||c|-|d||$ actually means
$|a|-|b|=|c|-|d|$. Then, together with $|a|+|b|=|c|+|d|$, we have
$|a|=|c|$ and $|b|=|d|$, obtaining a matrix of the form
$F=\left(\begin{smallmatrix}
a&b\\
ae^{i\psi}&be^{i\psi}\\
\end{smallmatrix}\right)$. But this matrix is singular, a contradiction.

Therefore, we must have $|a|-|b|=|d|-|c|$, which implies $|a|=|d|,|b|=|c|$ and,
after a brief calculation,
\[F=\begin{pmatrix}
a&b\\
\cc b e^{i\psi}&\cc ae^{i\psi}\\
\end{pmatrix}
\] for a suitable $\psi$. 

Now it remains to use the normalization formula $\det F=1$ to see 
that $\psi$ is either $0$ or $\pi$. If $\psi=0$, we are done. Otherwise, we
would have $\det F=|b|^2-|a|^2=1$, so $|b|>|a|$. But then
$|F(0)|=\frac{|b|}{|a|}>1$, so $F$ would turn the disc inside out, a
contradiction.
\end{proof}

\begin{definition}
Let $\alpha\in[0,2\pi),\,r\in[1,\infty)$. Call the transformation $R_\alpha(z)=e^{i\alpha}
z$ a \emph{rotation} and the transformation
$$
C_r=\frac1{{2}}
\begin{pmatrix}
r+\frac1r&r-\frac1r\\
r-\frac1r&r+\frac1r
\end{pmatrix},
$$
a \emph{contraction to 1}.
\end{definition}
Obviously, the identity map is both a contraction to $1$ and a rotation.
Moreover, rotations are precisely those disc preserving MTs whose matrices are diagonal.

\begin{remark}
Observe that any contraction to $1$ fixes the points $\pm 1$. The name
``contraction to 1'' comes from the fact that for $r>1$, the map $C_r$ is a
contraction in a suitable neighborhood of $1$ as can be seen by computing the
derivative $C_r'(1)=\frac1{r^2}$. Similarly, $C_r$ expands some neighborhood of
$-1$ as $C_r'(-1)=r^2$. Such $C_r$ is hyperbolic and acts on $\Te$ by making all points (with the exception of $-1$) ``flow'' towards $1$.  As we show in the next section, the sequence $\{C_n\}_{n=1}^\infty$ represents the point 1.
\end{remark}

\begin{lemma}\label{lemUniForm}
Let $F$ be a Möbius transformation. If $F$ is disc preserving then there exist
$\phi_1,\phi_2$ and $r$ such that $F=R_{\phi_1}\circ C_r \circ
R_{\phi_2}$. Moreover, if $F$ is not a rotation then $R_{\phi_1},R_{\phi_2}, C_r$ are uniquely determined by $F$.
\end{lemma}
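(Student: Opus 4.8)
The plan is to turn the statement into an explicit matrix factorization, using the normal form supplied by Lemma~\ref{lemCPform}. That lemma lets me write $F=\left(\begin{smallmatrix}\alpha&\beta\\\cc\beta&\cc\alpha\end{smallmatrix}\right)$ with $|\alpha|^2-|\beta|^2=1$. Normalizing the rotation so that it has determinant $1$, the matrix of $R_\phi$ is $\mathrm{diag}(e^{i\phi/2},e^{-i\phi/2})$, while $C_r$ is already given in matrix form with real entries $c:=\tfrac12(r+r^{-1})$ and $s:=\tfrac12(r-r^{-1})$, which satisfy $c\ge 1$, $s\ge 0$ and $c^2-s^2=1$. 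Since composition of Möbius transformations is matrix multiplication ($F_A\circ F_B=F_{A\cdot B}$), the decomposition $F=R_{\phi_1}\circ C_r\circ R_{\phi_2}$ is exactly a factorization of the matrix of $F$ as diagonal times $C_r$ times diagonal.

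First I would compute this product directly. Setting $a=\phi_1/2$ and $b=\phi_2/2$, the multiplication gives
\[
R_{\phi_1}C_rR_{\phi_2}=\begin{pmatrix} c\,e^{i(a+b)} & s\,e^{i(a-b)} \\ s\,e^{-i(a-b)} & c\,e^{-i(a+b)} \end{pmatrix}.
\]
Comparing with $F$ yields $c=|\alpha|$, $s=|\beta|$, and, where the entries are nonzero, $a+b=\arg\alpha$ and $a-b=\arg\beta$. For existence, note that $|\alpha|\ge 1$ forces a unique $r\ge 1$ solving $\tfrac12(r+r^{-1})=|\alpha|$, and the identity $c^2-s^2=1=|\alpha|^2-|\beta|^2$ makes $s=|\beta|$ automatic; a short computation gives $r=|\alpha|+|\beta|$. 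The two angle equations then recover $a+b$ and $a-b$, hence $\phi_1$ and $\phi_2$, so the factorization exists for every disc preserving $F$.

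For uniqueness when $F$ is not a rotation, recall that rotations are exactly the diagonal matrices, so here $\beta\neq 0$; thus both $\arg\alpha$ and $\arg\beta$ are genuine angles and the system above has a unique solution, with $r=|\alpha|+|\beta|$ forced by $r\ge 1$ and $\phi_1=\arg\alpha+\arg\beta$, $\phi_2=\arg\alpha-\arg\beta$ determined modulo $2\pi$. The step I expect to need the most care is the sign ambiguity of the matrix representative: $F$ is represented equally by $\left(\begin{smallmatrix}\alpha&\beta\\\cc\beta&\cc\alpha\end{smallmatrix}\right)$ and its negative. I would verify that replacing $(\alpha,\beta)$ by $(-\alpha,-\beta)$ shifts both $\arg\alpha$ and $\arg\beta$ by $\pi$, hence changes $\phi_1$ by $2\pi$ and leaves $\phi_2$ unchanged, so the recovered angles are well defined modulo $2\pi$ regardless of the chosen representative.

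Finally, the degenerate case $\beta=0$ is precisely where uniqueness fails: then $s=0$, so $r=1$ and $C_r=\id$, giving $F=R_{\phi_1}\circ R_{\phi_2}=R_{\phi_1+\phi_2}$. Only the sum $\phi_1+\phi_2$ is then determined, which explains why the lemma restricts the uniqueness assertion to transformations that are not rotations.
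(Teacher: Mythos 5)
Your proof is correct and follows essentially the same route as the paper: write $F$, the rotations and $C_r$ as normalized matrices, multiply out, match the moduli of the entries to determine $r$ (via $\tfrac12(r+\tfrac1r)=|\alpha|$), and solve the linear system $\phi_1+\phi_2=2\arg\alpha$, $\phi_1-\phi_2=2\arg\beta$ for the angles, with the $\beta=0$ case explaining the rotation exclusion. Your explicit check that the $\pm$ ambiguity of the matrix representative only shifts $\phi_1$ by $2\pi$ is a small point the paper leaves implicit, but it does not change the substance of the argument.
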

\begin{proof}
We want to satisfy the equation
$$\begin{pmatrix}
\alpha&\beta\\
\cc\beta&\cc\alpha\\
\end{pmatrix}=F
=R_{\phi_1}\circ C_r \circ R_{\phi_2}=
\begin{pmatrix}
\frac12\left(r+\frac1r\right)e^{i\frac{\phi_1+\phi_2}2}&
\frac12
\left(r-\frac1r\right)e^{i\frac{\phi_1-\phi_2}2}\\
\frac12\left(r-\frac1r\right)e^{-i\frac{\phi_1-\phi_2}2}& 
\frac12\left(r+\frac1r\right)e^{-i\frac{\phi_1+\phi_2}2}\\
\end{pmatrix}.
$$
If $\beta=0$ then $F$ is a rotation and there are many solutions to the above
equation; for example
$r=1,\,\phi_1=2\arg\alpha,\,\phi_2=0$. Let us now assume $\beta\neq 0$.

Choose $r$ so that $\frac1{2}\left(r+\frac1r\right)=|\alpha|$.
It is easy to see that we will then have
$\frac1{2}\left(r-\frac1r\right)=|\beta|$ so it remains to get the arguments of
$\alpha$ and $\beta$ right. 

Obviously, we need to choose the parameters of the rotations $\phi_1$ and
$\phi_2$ so that 
we satisfy the conditions $\phi_1+\phi_2=2\arg \alpha$ and
$\phi_1-\phi_2=2\arg\beta$. But this is a linear system with a single solution,
therefore $\phi_1,\phi_2$ are unique (modulo $2\pi$, of course).
\end{proof}

Denote by $\dot F (x)$ the modulus of the derivative of $F$
at $x$. Direct calculation gives us that when
$F=\left(\begin{smallmatrix}\alpha&\beta\\\cc\beta&\cc\alpha\end{smallmatrix}\right)$,
$\det F=1$ then
\[
\dot F (x)=|F'(x)|=\frac1{\left|\cc \beta x+\cc \alpha\right|^2}.
\]
This number measures whether and how much $F$ expands or contracts the neighborhood of $x$.

\begin{definition}
Let $F$ be a Möbius transformation. Then, inspired by \cite{Kurka} and
\cite{Katok}, we define the four sets
\begin{align*}
U&=\{x\in\Te: \dot{F}(x)<1\}\\
V&=\{x\in\Te: \dot{(F^{-1})}(x)>1\}\\
C&=\{x\in\overline{\ce}: \dot{F}(x)\geq1\}\\
D&=\{x\in\overline{\ce}: \dot{(F^{-1})}(x)\geq1\}.\\
\end{align*}
Call $U$ the \emph{contraction interval of $F$}, $V$ the \emph{expansion
interval of $F^{-1}$} and $C$ resp. $D$ the \emph{expansion sets} of $F$
resp. $F^{-1}$.
\end{definition}

By Lemma~\ref{lemUniForm} we have that for every $F$ there exist
$\phi_1,\phi_2$ and $r$ such that $F=R_{\phi_1}\circ C_r\circ R_{\phi_2}$. As
$\dot R_\phi=1$, we have $\dot F(x)=\dot{C_r} (R_{\phi_2}(x))$ and
$\dot{(F^{-1})}(x)=\dot {(C_{r}^{-1})}(R_{-\phi_1}(x))$. Because the sets $U$ and $C$
are defined using $\dot F(x)=\dot{C_r} (R_{\phi_2}(x))$, the value of $r$ determines the
shapes and sizes of $U$ and $C$ while $\phi_2$ rotates $U$ and $C$ clockwise
around the point $0$. Similarly, the shapes of $V$ and $D$ depend on $r$ while
$\phi_1$ determines positions of $V$ and $D$, rotating them (counterclockwise)
around $0$.

\begin{lemma}\label{lemMobiology}
Let $F=\left(\begin{smallmatrix}\alpha&\beta\\\cc\beta&\cc\alpha\end{smallmatrix}\right)$ be a Möbius transformation that is not a rotation. 
Then the following holds:
\begin{enumerate}
\item $C$ and $D$ are circles with the same radius $|\beta|^{-1}$ and centers
$c,d$ such that $c=-\frac{\cc\alpha}{\cc\beta}$, $d=\frac{\alpha}{\cc\beta}$.
Moreover, $|c|=|d|=\sqrt{|\beta|^{-2}+1}$.
\item $U=\Te\setminus C$ and $V=\Te\cap \Int(D)$
\item $F(\Ce\setminus C)=\Int D$
\item $F(U)=V$
\item $|V|<\pi$
\item $|U|+|V|=2\pi$
\item If $x\neq y$ are points in $V$ then $I$, the shorter of the
two intervals joining $x,y$, lies in $V$.
\end{enumerate}
\end{lemma}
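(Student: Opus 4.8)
The plan is to extract everything from the explicit derivative formula $\dot F(z)=|\cc\beta z+\cc\alpha|^{-2}$, valid on all of $\overline{\ce}$, together with a single chain-rule identity and the normalization $|\alpha|^2-|\beta|^2=1$. For item~1, note that $\dot F(z)\ge 1$ is equivalent to $|\cc\beta z+\cc\alpha|\le 1$; dividing by $|\beta|$ (nonzero, as $F$ is not a rotation) rewrites this as $|z-c|\le|\beta|^{-1}$ with $c=-\cc\alpha/\cc\beta$, exhibiting $C$ as a closed disc of radius $|\beta|^{-1}$. Reading off the matrix of $F^{-1}$ and repeating the computation gives $D$ as the disc of the same radius centred at $d=\alpha/\cc\beta$, and $|c|^2=|d|^2=|\alpha|^2/|\beta|^2=(1+|\beta|^2)/|\beta|^2=|\beta|^{-2}+1$ follows from the normalization. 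Item~2 is then immediate on intersecting with $\Te$: $U$ is the complement in $\Te$ of the closed set $\{\dot F\ge 1\}$, i.e. $\Te\setminus C$, while the strict inequality defining $V$ picks out the open disc, so $V=\Te\cap\Int(D)$.

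For items~3 and~4 the key analytic observation is the chain-rule identity obtained by differentiating $F^{-1}\circ F=\id$: for every $z$ one has $\dot F(z)\cdot\dot{(F^{-1})}(F(z))=1$. Hence $z\notin C$ (that is, $\dot F(z)<1$) if and only if $\dot{(F^{-1})}(F(z))>1$, i.e. $F(z)\in\Int(D)$. Since $F$ is a bijection of $\overline{\ce}$, this gives $F(\overline{\ce}\setminus C)=\Int(D)$, which is item~3. Item~4 follows by intersecting with $\Te$ and using that $F$ is disc preserving, so $F(\Te)=\Te$, whence $F(U)=F(\Te\setminus C)=\Te\cap\Int(D)=V$.

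For the metric statements~5--7 the plan is to exploit $|c|^2=|d|^2=|\beta|^{-2}+1$. Writing $\rho=|\beta|^{-1}$ for the common radius, each of the circles $\partial C,\partial D$ has radius $\rho$ and centre at distance $\sqrt{\rho^2+1}$ from the origin, so by the Pythagorean criterion each meets $\Te$ orthogonally and in exactly two points; consequently $\Te\cap\Int(D)$ is a single open arc, the one on the side of $d$. To bound its length (item~5) I would consider the right triangle with vertices $0,d$ and an intersection point $p$: the orthogonality places the right angle at $p$, so the half-angle subtended at $0$ equals $\arccos\!\left(1/\sqrt{\rho^2+1}\right)<\pi/2$, whence the whole arc subtends less than $\pi$ and $|V|<\pi$. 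Since $C$ and $D$ present the identical configuration (same $\rho$, same centre-distance), the arcs $\Te\cap C$ and $\Te\cap\Int(D)$ have equal length; as $|U|=2\pi-|\Te\cap C|$, this yields $|U|+|V|=2\pi$, proving item~6. Item~7 is then the geometric fact that $V$ is a single arc of length $<\pi$: for $x\ne y$ in $V$, the sub-arc of $V$ joining them has length $<\pi$, hence is the shorter of the two arcs between $x$ and $y$ and lies in $V$.

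I expect the main obstacle to be the metric part, items~5--7, specifically keeping the geometry clean: one must confirm that the two circles genuinely cross (rather than one containing the other) and identify which arc lies inside the disc. The relation $|c|^2=\rho^2+1$ settles both at once and is the crux. An alternative that sidesteps the trigonometry is to invoke Lemma~\ref{lemUniForm} together with the observation preceding it that the shapes and sizes of $U,C$ (resp. $V,D$) depend only on $r$ and not on the rotations $\phi_1,\phi_2$; one may then assume without loss of generality that $F=C_r$, reducing~5--7 to an explicit symmetric computation about the fixed points $\pm 1$. Throughout, care is needed to track strict versus non-strict inequalities when passing between the closed sets $C,D$ and the open arcs $U,V$.
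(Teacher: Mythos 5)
Your proof is correct and follows essentially the same approach as the paper: the same direct calculation from $\dot F(z)=|\overline\beta z+\overline\alpha|^{-2}$ for (1)--(2), the same chain-rule identity $\dot F(z)\cdot\dot{(F^{-1})}(F(z))=1$ for (3)--(4), and the same right-triangle geometry for (5)--(7). The paper relegates the latter to a figure and only states it as ``elementary geometrical analysis,'' but its subsequent remark confirms exactly your argument (right triangles by Pythagoras and the formula $|V|=2\arccos\bigl(|\beta|/\sqrt{1+|\beta|^2}\bigr)$, which matches your $2\arccos\bigl(1/\sqrt{\rho^2+1}\bigr)$ with $\rho=|\beta|^{-1}$), so your write-up simply fills in details the paper leaves implicit.
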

\begin{proof}
We prove (1) by direct calculation. We have $\dot F (x)=\frac1{\left|\cc \beta x+\cc
\alpha\right|^2}$ and therefore $x\in C$ if and only if 
$$\left|x+\frac{\cc\alpha}{\cc\beta}\right|\leq|\beta|^{-1}.$$
This is the equation of a disc with the center
$c=-\frac{\cc\alpha}{\cc\beta}$ and radius $|\beta|^{-1}$. Also, it is
$$|c|=\frac{|\alpha|}{|\beta|}=\frac{\sqrt{1+|\beta|^2}}{|\beta|}=\sqrt{|\beta|^{-2}+1}.$$

The case of $F^{-1}$ is similar.

Observe that $\dot{(F^{-1})}(z)=1$ precisely
on the boundary of $D$. This (together with $\Te=(\Te\cap C)\cup U$) gives us (2).

Parts (3) and (4) follow from the formula for the derivative of a composite
function. We can write $\dot{F^{-1}}(F(z))=\frac{1}{\dot{F}(z)}$ and so $\dot F(z)<1$ if and only if
$\dot{F^{-1}}(F(z))>1$. 

\picture{The geometry of $C$ and $D$}{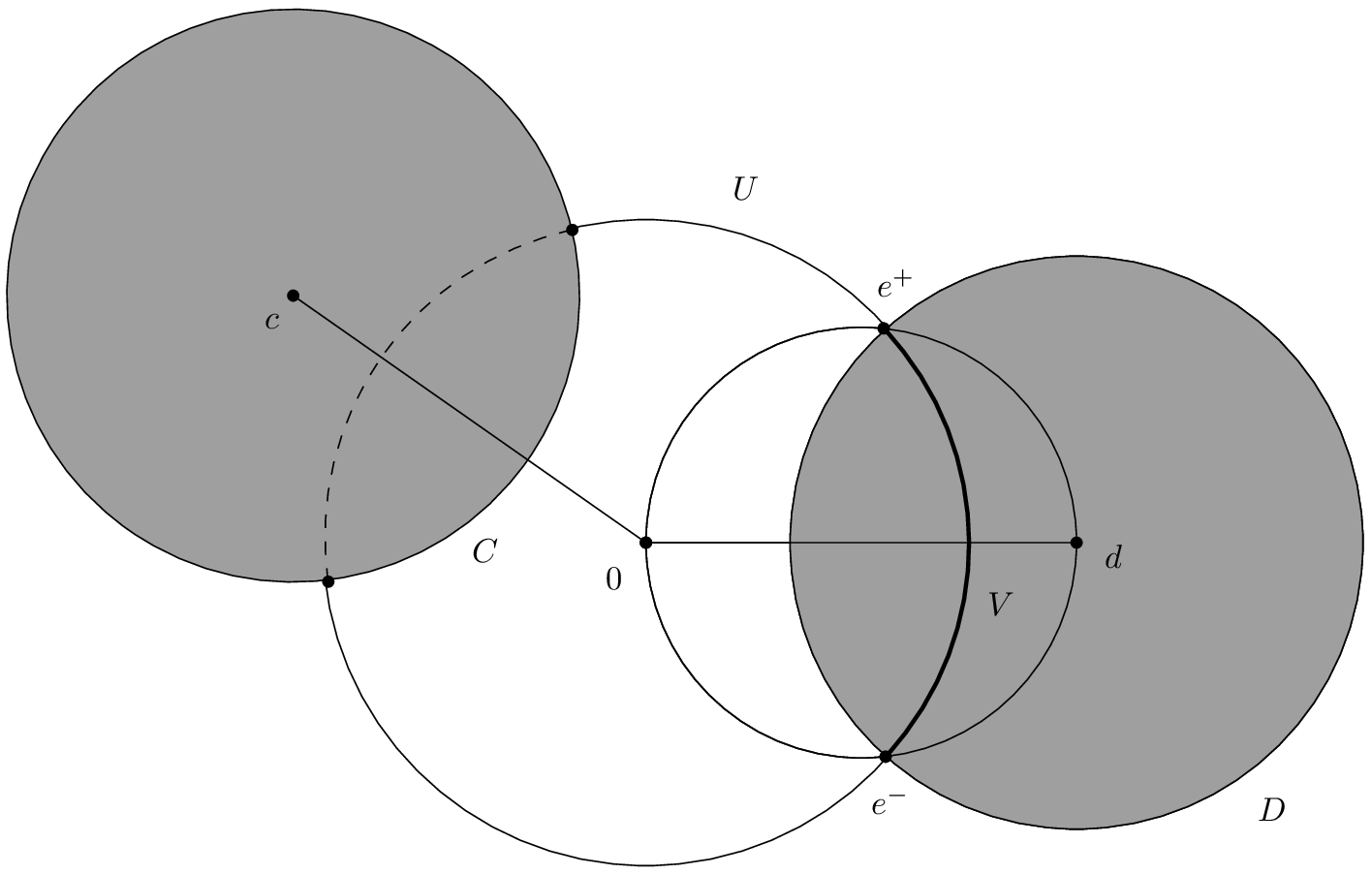}{figCDgeometry}
Elementary geometrical analysis of the situation yields (5) and (6)
(see Figure~\ref{figCDgeometry}). Finally, (7) is a direct consequence of (5).
\end{proof}
\begin{remark}
Observe that the triangles $0de^+$ and $0de^-$ in Figure~\ref{figCDgeometry} 
are right by Pythagoras' theorem. Also, we can compute that the length of $V$ is equal to 
$2\arccos\left(\frac{|\beta|}{\sqrt{1+|\beta|^2}}\right)$ and the distance of $d$ from $V$
is $\sqrt{1+|\beta|^{-2}}-1$. Therefore, the size of $D$, length of $V$ and the
distance of $d$ and $\Te$ are all decreasing functions of $|\beta|$. This will
be important in Theorem~\ref{thmCharacteriseConvergence}.
\end{remark}

\subsection{Representing $\Te$ and $\Er$}

Recall that a sequence of Möbius transformations $\{F_n\}_{n=1}^\infty$ 
represents the point $x\in\Te$ if and only if
$\lim_{n\to\infty} F_n\mu=\delta_x$ in the weak* topology. 
We will now list several equivalent definitions of what does it mean to
represent a point. Some of these results were already known (see Proposition 3
in \cite{Kurka2}).

\label{secDefinitions}
\begin{theorem}
\label{thmCharacteriseConvergence}
Let $\{F_n\}_{n=1}^\infty$ be a sequence of MTs only finitely many of which are rotations. 
Denote by $V_n$ the expansion interval of
$F^{-1}_n$, by $D_n$ the expansion set of $F_n^{-1}$ and by $d_n$ the center 
of $D_n$. Then the following statements are equivalent:
\begin{enumerate}
\item The sequence $\{F_n\}_{n=1}^\infty$ represents $x\in \Te$.
\item For every open interval $I$ on $\Te$ containing $x$ we have
$\lim_{n\to\infty}(F_n\mu)(I)=1$.
\item There exists a number $c>0$ such that for every open interval 
$I$ on $\Te$ containing $x$ it is true that
$\lim\inf_{n\to\infty}(F_n\mu)(I)>c$.
\item  $\ds\lim_{n\to\infty}d_n= x$
\item  $\ds\lim_{n\to\infty}D_n= \{x\}$
\item  $\ds\lim_{n\to\infty}\overline{V}_n=\{x\}$
\item  For all $K\subset \Int(\De)$ compact we have $\ds\lim_{n\to\infty}F_n(K)= \{x\}$.
\item  For all $z\in\Int(\De)$ we have $\ds\lim_{n\to\infty}F_n(z)= x$ .
\item  There exists $z\in\Int(\De)$ such that $\ds\lim_{n\to\infty}F_n(z)= x$.
\item The sequence $\{F_n\}_{n=1}^\infty$ converges to the constant map $c_x: z \mapsto x$ in measure,
that is
\[\forall \epsilon>0,\,
\lim_{n\to\infty}\mu(\{z:\rho(F_n(z),x)>\epsilon\})=0.\]
\end{enumerate}

Here, $\mu$ is the uniform probability measure on $\Te$ and $\rho$ the metric
on $\Te$.
In (5), (6) and (7), we take convergence in the Hausdorff metric on the
space of nonempty compact subsets of $\ce$, $\Te$ and $\De$ respectively. In particular,
$E_n\to\{x\}$ if and only if for every $\epsilon>0$ there exists $n_0$ such that $\forall
n>n_0$ it is $E_n\subset B_\epsilon(x)$.
\end{theorem}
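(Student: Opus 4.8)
The plan is to prove the ten conditions equivalent by arranging them into four clusters and closing a cycle of implications: the measure-theoretic conditions (1)--(3), the geometric conditions (4)--(6) describing the shrinking of $D_n,V_n,d_n$, the pointwise conditions (7)--(9), and the convergence in measure (10). The workhorse throughout is Lemma~\ref{lemMobiology}, which tells us that $D_n$ is a circle of radius $|\beta_n|^{-1}$ centered at $d_n$ with $|d_n|=\sqrt{|\beta_n|^{-2}+1}$, that $V_n=\Te\cap\Int(D_n)$ is an arc with $|V_n|<\pi$ and $|U_n|+|V_n|=2\pi$, and that $F_n(\Ce\setminus C_n)=\Int D_n$ together with $F_n(U_n)=V_n$. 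Two bookkeeping identities organize everything: first, writing $F_n(0)=\beta_n/\cc\alpha_n$ and $d_n=\alpha_n/\cc\beta_n$ one checks $d_n=1/\cc{F_n(0)}$, so $d_n$ is the reflection of the interior point $F_n(0)$ across $\Te$ and hence $d_n\to x$ means exactly $F_n(0)\to x$; second, since $F_n^{-1}(V_n)=U_n$ we have the exact mass split $(F_n\mu)(V_n)=\mu(U_n)=1-|V_n|/2\pi$ and $(F_n\mu)(\Te\setminus V_n)=|V_n|/2\pi$. (The hypothesis that only finitely many $F_n$ are rotations guarantees $\beta_n\neq 0$ on the relevant tail, so these objects are genuinely defined.)

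I would first settle the geometric cluster. From the formulas in the remark after Lemma~\ref{lemMobiology}, the length of $V_n$, the radius of $D_n$, and the distance of $d_n$ from $\Te$ are decreasing functions of $|\beta_n|$ tending to $0$ as $|\beta_n|\to\infty$, and bounded below when $|\beta_n|$ is bounded. Since $|d_n|\ge 1$, the relation $d_n\to x$ forces $|d_n|\to1$, hence $|\beta_n|\to\infty$, whence $D_n$ shrinks to its center and $D_n\to\{x\}$, giving (4)$\Rightarrow$(5); then $\overline V_n\subset\overline{\Int D_n}$ yields (5)$\Rightarrow$(6), while $\overline V_n\to\{x\}$ forces $|V_n|\to0$, hence $|\beta_n|\to\infty$ and, as $V_n$ lies within $|\beta_n|^{-1}$ of $d_n$, also $d_n\to x$, closing (6)$\Rightarrow$(4). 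For the pointwise cluster, (4)$\Rightarrow$(7) uses that when $|\beta_n|\to\infty$ the closed disc $C_n$ (center of modulus $\sqrt{1+|\beta_n|^{-2}}\to1$, radius $|\beta_n|^{-1}\to0$) shrinks towards $\Te$, so any compact $K\subset\Int(\De)$ eventually lies in $\Ce\setminus C_n$ and therefore $F_n(K)\subset\Int D_n\to\{x\}$. The implications (7)$\Rightarrow$(8)$\Rightarrow$(9) are immediate, and (9)$\Rightarrow$(4) is the one place here needing compactness: if $|\beta_n|$ failed to tend to infinity, a subsequence would converge to a genuine disc-preserving MT $G$, forcing $F_n(z_0)\to G(z_0)\in\Int(\De)$, contradicting $F_n(z_0)\to x\in\Te$; once $|\beta_n|\to\infty$, the point $F_n(z_0)\in\Int D_n$ lies within $|\beta_n|^{-1}\to0$ of $d_n$, so $d_n\to x$.

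The bridges to the measure conditions come from the mass split. If (6) holds then $|V_n|\to0$, so $(F_n\mu)(V_n)\to1$; as $V_n\subset I$ eventually for any open $I\ni x$, we get $(F_n\mu)(I)\to1$, which is (2). A routine truncation ($|\int f\,\dd(F_n\mu)-f(x)|$ controlled on $I$ by continuity and off $I$ by $2\|f\|_\infty(F_n\mu)(\Te\setminus I)$) upgrades (2) to weak$^*$ convergence, and (1)$\Rightarrow$(2) is the open-set half of the portmanteau inequality; thus (1)$\Leftrightarrow$(2), and trivially (2)$\Rightarrow$(3). For (10), the same split gives both directions: for $z\in U_n$ we have $F_n(z)\in V_n\subset B_\epsilon(x)$ eventually, so $\mu(\{z:\rho(F_n(z),x)>\epsilon\})\le|V_n|/2\pi\to0$, proving (6)$\Rightarrow$(10); and $(F_n\mu)(B_\epsilon(x))=1-\mu(\{z:\rho(F_n(z),x)\ge\epsilon\})$ turns (10) into (2).

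The main obstacle is closing the loop with (3)$\Rightarrow$(6), since (3) is the weakest hypothesis: a single uniform constant $c$ bounding the mass of every neighborhood of $x$ from below, with no control on where the remaining mass sits. Here I would argue by compactness in two stages. First, if $|\beta_n|$ did not tend to infinity, a subsequence $F_{n_k}$ would converge to a genuine disc-preserving MT $G$; since $F_{n_k}\to G$ uniformly, $F_{n_k}\mu\to G\mu$ weak$^*$, and $G\mu$ has a continuous density, hence is non-atomic, so $(G\mu)(I)\to0$ as $I$ shrinks to $x$. Choosing $I$ with $(G\mu)(I)<c$ and using $(F_{n_k}\mu)(I)\to(G\mu)(I)$ (the endpoints of $I$ being $G\mu$-null) contradicts (3). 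Hence $|\beta_n|\to\infty$ and $|V_n|\to0$. Second, any subsequential Hausdorff limit of the shrinking arcs $\overline V_n$ is a single point $y$; were $y\neq x$, a small $I\ni x$ would be disjoint from $V_{n_k}$ eventually, forcing $(F_{n_k}\mu)(I)\le|V_{n_k}|/2\pi\to0<c$, again contradicting (3). Therefore $\overline V_n\to\{x\}$, which is (6). The delicacy here, and the reason I expect this to be the crux, is that one must exploit both the rigidity of the bounded-$|\beta_n|$ limit (its non-atomicity) and the fact that the escaping mass is confined to the controllably small complement $\Te\setminus V_n$.
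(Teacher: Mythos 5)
Your proof is correct, and its skeleton overlaps heavily with the paper's: the geometric equivalences $(4)\Leftrightarrow(5)\Leftrightarrow(6)$, the bridges $(6)\Rightarrow(2)$, $(1)\Leftrightarrow(2)\Rightarrow(3)$, $(2)\Leftrightarrow(10)$, and the chain $(4)\Rightarrow(7)\Rightarrow(8)\Rightarrow(9)$ are essentially the arguments in the paper. Where you genuinely diverge is in the two ``return'' implications that close the loop. The paper proves $(3)\Rightarrow(4)$ by a direct, quantitative computation: from the mass lower bound it extracts a point $z$ in a small interval $I\ni x$ where $\dot{(F_n^{-1})}(z)>\frac{1}{4\epsilon}$, and then the explicit formula $\dot{(F_n^{-1})}(z)=|-\cc\beta_n z+\alpha_n|^{-2}$ forces $|\beta_n|$ large and $|x-d_n|<c\epsilon+2\sqrt{\epsilon}$. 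Likewise the paper's $(9)\Rightarrow(4)$ treats $z=0$ via the inversion identity $d_n=1/\cc{F_n(0)}$ and then reduces general $z$ to this case by composing with an MT $M$ with $M(0)=z$ and using absolute continuity of $M\mu$ with respect to $\mu$, round-tripping through $(2)$. You replace both of these by a single soft dichotomy: either $|\beta_n|\to\infty$, or some subsequence of the normalized matrices converges to a genuine disc preserving MT $G$ (the entries are bounded since $|\alpha_n|^2=1+|\beta_n|^2$, and the normalization survives the limit); the second horn is incompatible with $(3)$ because $F_{n_k}\mu\to G\mu$ weak$^*$ and $G\mu$ is non-atomic (portmanteau on continuity sets), and incompatible with $(9)$ because $G$ maps $\Int(\De)$ into $\Int(\De)$; the first horn localizes everything inside the radius-$|\beta_n|^{-1}$ disc $D_n$. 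Both routes are sound. The paper's computations are self-contained and give explicit rates, using nothing beyond the derivative formula; your compactness argument is shorter and more conceptual, unifies the two hard implications under one principle, and avoids the composition/absolute-continuity detour, at the price of importing standard external facts (normal-families-style compactness of the normalized matrices, the portmanteau theorem, non-atomicity of $G\mu$) that the paper never needs.
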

\begin{proof}
We prove a sequence of implications. Unfortunately, the easiest to understand
sequence of implications that we found is a bit more complicated than the usual
``wheel'' used to prove theorems of this type. See Figure~\ref{figNonwheel} for
our global plan.
 
\picture{The sequence of implications used to prove
Theorem~\ref{thmCharacteriseConvergence}}{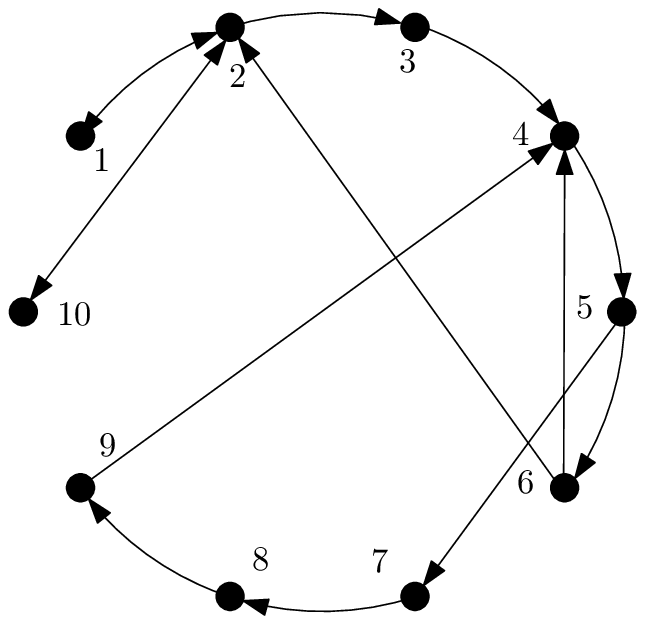}{figNonwheel}

Assume (1). Let $I=(a,b)$ be an open interval containing $x$.
Consider the function $f$ defined by:
\[
f(z)=
\begin{cases}
1&\text{if $z\in [b,a]$}\\
\frac{\rho(x,z)}{\rho(x,a)}&\text{if $z\in [a,x]$}\\
\frac{\rho(x,z)}{\rho(x,b)}&\text{if $z\in [x,b]$}
\end{cases}
\]
Obviously, $f$ is continuous on $\Te$ and $f(x)=0$ (see Figure~\ref{fig-ridge}
for the situation in the case $|I|<\pi$).
By definition, it is $\int f \dd F_n\mu\to f(x)=0$. Now consider that
$f(I^c)=1$ and so $\int f \dd F_n\mu\geq F_n\mu(I^c)$. This means 
$F_n\mu(I^c)\to 0$ and so $F_n\mu(I)\to 1$, proving (2).

\picture{The graph of $f$ used to prove
$(1)\Rightarrow(2)$}{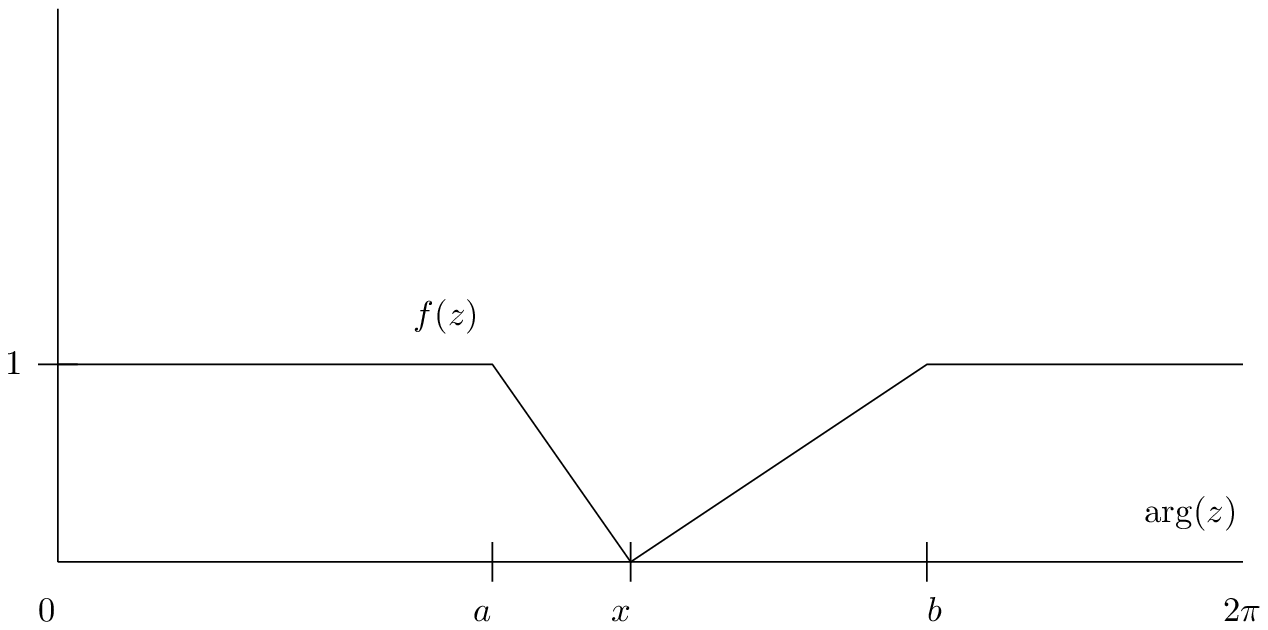}{fig-ridge}

To prove $(2)\Rightarrow(1)$, consider any continuous function $f:\Te\to\er$.
As $\Te$ is compact, $f$ is bounded by some $M$. 
For every $\epsilon>0$ there exists $\delta>0$ such that whenever 
$y\in I=(x-\delta,x+\delta)$, it is $|f(x)-f(y)|<\epsilon$. By (2), there
exists $n_0$ such that whenever $n>n_0$, we have $F_n\mu (I)>1-\epsilon$.
Therefore (assuming without loss of generality $\epsilon<1$):
\[
\left|f(x)-\int f \dd F_n\mu\right|\leq \epsilon(1-\epsilon)+2M\epsilon<(2M+1)\epsilon.
\]
This proves $\lim_{n\to\infty}\int f \dd F_n\mu=f(x)=\int f\dd\delta_x$,
verifying weak$^*$ convergence.

Claim (3) easily follows from (2) by setting $c=\frac12$.

Assume that (3) is true. Denote
$F_n=\begin{pmatrix}
\alpha_n&\beta_n\\
\cc\beta_n&\cc\alpha_n
\end{pmatrix}
$.

Let $\epsilon>0$ and take the interval
$I=(x-c\epsilon,x+c\epsilon)$. There exists $n_0$ such that for all $n>n_0$ 
we have $|F^{-1}_n(I)|>\frac c2$. Therefore, for some $z\in I$ the
inequality
$\dot{(F^{-1}_n)}(z)>\frac{c}{4c\epsilon}= \frac1{4\epsilon}$ holds. 

Recall that $\dot{(F^{-1}_n)}(z)=\frac1{|-\cc\beta_n z+\alpha_n|^2}$.
 Moreover, for $|z|=1$ we have
$|-\cc\beta_n z+\alpha_n|\geq |\alpha_n|-|\beta_n|$, so
$\dot{(F^{-1}_n)}(z)\leq (|\alpha_n|-|\beta_n|)^{-2}$.
Using $|\alpha_n|^2-|\beta_n|^2=1$, we
obtain
\[
\frac1{4\epsilon}<\dot{(F^{-1}_n)}(z)\leq (|\alpha_n|+|\beta_n|)^{2},
\]
so that for small enough $\epsilon$, we have
$|\beta_n|>1$ for all $n>n_0$. 

Now from $\frac1{|-\cc\beta_n z+\alpha_n|^2}>\frac{1}{4\epsilon}$ 
we obtain
$|\frac{\alpha_n}{\cc\beta_n}-
z|<\frac{2\sqrt\epsilon}{\cc\beta_n}<2\sqrt\epsilon$. 
Recall that $\frac{\alpha_n}{\cc\beta_n}$ is precisely the point $d_n$. We have
just shown that for any small enough $\epsilon>0$ there is $n_0$ such that for
all $n>n_0$ we can find $z$ such that:
\[
|x-d_n|<|x-z|+|z-d_n|<c\epsilon+2\sqrt\epsilon
\]
implying $d_n\to x$.

An elementary examination of the geometry of $V_n$ and $D_n$ shows that
(4), (5) and (6) are all equivalent. In particular, if $d_n\to x$ then the
diameter of $D_n$ tends to zero and so $D_n\to\{x\}$. Moreover,
$\oV_n=\Te\cap D_n$, so if $D_n\to\{x\}$ then $\oV_n\to\{x\}$ as well. It remains
to see that if $\oV_n\to\{x\}$ then $|V_n|\to 0$ which can only happen when
$|\beta_n|\to\infty$. Therefore $\rho(V_n,d_n)=\rho(\Te,d_n)$ tends to
zero, meaning that $d_n\to x$.

To prove $(6)\Rightarrow(2)$, consider any open interval $I\subset \Te$ containing
$x$. We know that $\oV_n\to\{x\}$, so there exists $n_0$ such that $n>n_0\Rightarrow
\oV_n\subset I$. Furthermore, for all $\epsilon>0$, we can find $n_\epsilon>n_0$ such
that $|V_n|<\epsilon$ whenever $n>n_\epsilon$. For $n>n_\epsilon$, we now have
the chain of inequalities:
\[
|F^{-1}_n (I)|\geq |F^{-1}_n(V_n)|=2\pi-|V_n|>2\pi-\epsilon,
\]
where the middle equality comes from Lemma~\ref{lemMobiology}. We have proved
$\mu (F^{-1}_n(I))\to 1$.

Denote now by $C_n$ the expansion set of $F^{-1}_n$ and assume (5). Observe 
that the diameter of $C_n$ is equal to the diameter of $D_n$ and so the
diameter of $C_n$ tends to 0. As $C_n$ is a circle with center outside
$\De$, for any $K\subset\Int(\De)$ compact there
exists $n_0$ such that $K\cap C_n=\emptyset$ whenever $n>n_0$. 
Then $F_n(K)\subset D_n$ and so $F_n(K)\to\{x\}$, proving (7).

As $\{z\}$ is a compact set, (8) easily follows from (7). Also, statement (9) is an
obvious consequence of (8).

We now prove $(9) \Rightarrow (4)$. First consider the case $z=0$. Then
$F_n(0)\to x$ iff $\frac{\beta_n}{\cc\alpha_n}\to x$. However,
$d_n=\frac{\alpha_n}{\cc\beta_n}$, therefore $d_n\to \frac 1{\cc x}$. As
$x\in\Te$ and the map
$z\mapsto \frac1{\cc z}$ is the circle inversion with respect to $\Te$ , 
we have $\frac1{\cc x}=x$, so $d_n\to x$.

In the case $z\neq 0$, we will make use of the already known equality
$(2)\Leftrightarrow (4)$.

Let $F_n(z)\to x\in \Te$ with $z\neq 0$. Let $M$ be any disc preserving MT
$M$ that sends $0$ to $z$ (it is easy to find such an MT, as $M(0)=z$ iff
$\frac{\beta}{\cc\alpha}=z$). 

Let $G_n=F_n\circ M$ and observe that $G_n(0)=F_n(z)\to x$. Therefore, as we
have just shown, the sequence $\{G_n\}_{n=1}^\infty$
satisfies (4) and (by $(4)\Rightarrow (2)$) we have $\mu(M^{-1}(F_n^{-1}(I)))\to 1$
whenever $I$ is an open interval containing $x$. 

But $M(\mu)$ is absolutely
continuous with respect to $\mu$ and so 
\[
\mu(M^{-1}(F_n^{-1}(I^c)))\to 0 \Rightarrow \mu(F_n^{-1}(I^c))\to 0.
\]
We then have $\mu(F_n^{-1}(I))\to 1$, therefore the sequence
$\{F_n\}_{n=1}^\infty$ satisfies (2). Using $(2)\Rightarrow(4)$, we finally obtain $(9)\Rightarrow (4)$.

It remains to show $(2) \Leftrightarrow (10)$, which turns out to be a simple
exercise: Assume $(2)$. Then $\forall \epsilon>0$ we have
\[
\mu(F_n^{-1}(x-\epsilon,x+\epsilon))\to 1\Rightarrow
\mu(F_n^{-1}(\{z:\rho(z,x)>\epsilon\}))\to 0.
\]
But
$F_n^{-1}(\{z:\rho(z,x)>\epsilon\}=\{z:\rho(F_n(z),x)>\epsilon\}$, proving
convergence in measure.

Similarly, if $\{F_n\}_{n=1}^\infty$ converges in measure, we obtain
that $\mu(F_n^{-1}(x-\epsilon,x+\epsilon))$ tends to 1 for each $\epsilon>0$.
Obviously, every open $I$ such that $x\in I$ contains an interval of the form
$(x-\epsilon,x+\epsilon)$, proving (2).
\end{proof}
\begin{remark}
Note that Theorem~\ref{thmCharacteriseConvergence} is mostly true even if there
are infinitely many rotations in the sequence $\{F_n\}_{n=1}^\infty$. In this
case, all the statements are trivially false, with the exception of (4) and (6)
that are not well defined (though (4) can be easily fixed by letting
$d_n=\infty$ for $F_n$ rotation).
\end{remark}

As an easy corollary of Theorem~\ref{thmCharacteriseConvergence}, we can 
prove that two intuitive ideas are true.
\begin{corollary}\label{corCompose}
Let $\{F_n\}_{n=1}^\infty$ be a sequence of MTs representing the point $x$. Let $M$
be a disc preserving MT. Then
\begin{enumerate}
\item The sequence $\{F_n\circ M\}_{n=1}^\infty$ represents $x$.
\item The sequence $\{M\circ F_n\}_{n=1}^\infty$ represents $M(x)$.
\end{enumerate}
\end{corollary}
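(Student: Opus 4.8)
The plan is to reduce both claims to the pointwise characterization of representation supplied by Theorem~\ref{thmCharacteriseConvergence}, in particular the equivalence of (1) with the conditions (8) and (9). Note first that since $\{F_n\}_{n=1}^\infty$ represents $x$, only finitely many of its terms can be rotations: a rotation fixes $\mu$, so if infinitely many $F_n$ were rotations the sequence $\{F_n\mu\}$ would have a subsequence constantly equal to $\mu\neq\delta_x$ and could not converge to $\delta_x$. Hence Theorem~\ref{thmCharacteriseConvergence} applies to $\{F_n\}$, and by $(1)\Rightarrow(8)$ we get $F_n(z)\to x$ for every $z\in\Int(\De)$.

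For part (1), I would fix any $z_0\in\Int(\De)$. Since $M$ is a disc preserving MT it is a homeomorphism of $\Ce$ with $M(\De)=\De$, so it maps $\Int(\De)$ onto $\Int(\De)$; in particular $M(z_0)\in\Int(\De)$. Therefore $(F_n\circ M)(z_0)=F_n(M(z_0))\to x$, which is exactly condition (9) for the sequence $\{F_n\circ M\}$ with target $x$, and $(9)\Rightarrow(1)$ then gives that $\{F_n\circ M\}$ represents $x$. For part (2) I would instead feed $F_n(z_0)\to x$ through the continuity of $M$: since $M$ is continuous on $\Ce$ we get $(M\circ F_n)(z_0)=M(F_n(z_0))\to M(x)$, and as $M$ is disc preserving $M(x)\in\Te$; this is condition (9) for $\{M\circ F_n\}$ with target $M(x)$, so $(9)\Rightarrow(1)$ yields the claim. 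Part (2) also admits a fully direct proof from Definition~\ref{defRep}: for continuous $f\colon\Te\to\er$ the function $f\circ M$ is again continuous, so $\int (f\circ M)\dd(F_n\mu)\to(f\circ M)(x)=f(M(x))$, and since $\int f\dd(M(F_n\mu))=\int(f\circ M)\dd(F_n\mu)$ this says precisely $M(F_n\mu)\to\delta_{M(x)}$.

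The one point that needs care — and the only genuine obstacle — is that invoking $(9)\Rightarrow(1)$ for the composed sequences $\{F_n\circ M\}$ and $\{M\circ F_n\}$ requires these to satisfy the standing hypothesis of Theorem~\ref{thmCharacteriseConvergence}, i.e. to contain only finitely many rotations. I would check this from the fact that representation forces $|\beta_n|\to\infty$: by $(1)\Leftrightarrow(6)$ we have $\oV_n\to\{x\}$, hence $|V_n|\to0$, which by the remark after Lemma~\ref{lemMobiology} is equivalent to $|\beta_n|\to\infty$. Writing $M=\left(\begin{smallmatrix}a&b\\\cc b&\cc a\end{smallmatrix}\right)$, the off-diagonal entry of the matrix product $M\cdot F_n$ (resp. $F_n\cdot M$) vanishes only if $|a|\,|\beta_n|=|b|\,|\alpha_n|$, which together with $|\alpha_n|^2-|\beta_n|^2=1$ and $|a|^2-|b|^2=1$ forces the bounded value $|\beta_n|=|b|$. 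Since $|\beta_n|\to\infty$, this can hold for only finitely many $n$, so each composed sequence is a rotation only finitely often and the theorem applies.

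In summary, the mathematical content is essentially immediate once the pointwise characterizations (8)/(9) are in hand — the right composition is handled by $M(\Int(\De))=\Int(\De)$ and the left composition by continuity of $M$ — and the only bookkeeping is the rotation count, which the direct measure argument for part (2) sidesteps entirely.
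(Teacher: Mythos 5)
Your argument is correct and is essentially the paper's own proof: the paper likewise reduces both parts to the interior-point criterion (8)/(9) of Theorem~\ref{thmCharacteriseConvergence}, using $M(0)\in\Int(\De)$ for part (1) and continuity of $M$ for part (2). Your extra check that the composed sequences contain only finitely many rotations (which the paper silently omits) is sound, though it can be short-circuited: once $(F_n\circ M)(z_0)\to x\in\Te$ (resp.\ $M(F_n(z_0))\to M(x)$) is established, only finitely many of the composed maps can be rotations anyway, since a rotation preserves the modulus $|z_0|<1$ of an interior point while the images converge in modulus to $1$.
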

\begin{proof}
In both cases, we use the fact that if $G_n(z)\to x$ for some $z\in\Int(\De)$
then the sequence $\{G_n\}_{n=1}^\infty$ represents $x$.
\begin{enumerate}
\item As $M(0)$ lies inside $\De$, we have $F_n(M(0))\to x$, therefore
$(F_n\circ M)(0)\to x$.
\item As $F_n(0)\to x$ and $M$ is continuous, we have $M(F_n(0))\to
M(x)$.\qedhere
\end{enumerate}
\end{proof}
We now have enough tools to show, like in~\cite{Kurka}, how do the three classes of MTs behave with respect to point representation:
\begin{enumerate}
\item Let $F$ be an elliptic disc preserving transformation. Then the sequence
$\{F^n\}_{n=1}^\infty$ does not represent any point.
\item Let $F$ be a parabolic disc preserving transformation. Then the sequence
$\{F^n\}_{n=1}^\infty$ represents the fixed point of $F$.
\item Let $F$ be a hyperbolic disc preserving transformation. Then the sequence
$\{F^n\}_{n=1}^\infty$ represents the stable fixed point of $F$.
\end{enumerate}

For all three claims, we will need part (8)
of Theorem~\ref{thmCharacteriseConvergence}.

To prove (1), recall that if $F$ is elliptic, there exists a fixed point of $F$
inside $\Te$. Denote this point by $x$. Then for all $n,\,F^n(x)=x\not\in\Te$,
so $\{F^n\}_{n=1}^\infty$ can not represent anything.

In the parabolic and hyperbolic case, denote by $x$ the (stable) fixed point of
$F$. We now use Lemma~\ref{lemFixedPoints2}
in the Appendix to obtain that for all $z\in\Int(\De)$ we have $F^n(z)\to x$.
Therefore, $\{F^n\}_{n=1}^\infty$ represents $x$, proving (2) and (3).

Going in a different direction, we obtain a useful sufficient condition
for representing a point.
\begin{corollary}
\label{corConvergencyTools}
Let $\{F_n\}_{n=1}^\infty$ be a sequence of Möbius transformations such that
for some $x_0\in\Te$ we have $\lim_{n\to\infty}\dot{(F^{-1}_n)}(x_0)=\infty$. Then 
$\{F_n\}_{n=1}^\infty$ represents $x_0$.
\end{corollary}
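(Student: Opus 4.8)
The plan is to reduce the claim to condition (4) of Theorem~\ref{thmCharacteriseConvergence}, namely $\lim_{n\to\infty} d_n = x_0$, which is the most convenient of the equivalent characterisations to reach from a hypothesis phrased in terms of $\dot{(F_n^{-1})}$. Writing each (disc preserving) $F_n=\left(\begin{smallmatrix}\alpha_n&\beta_n\\\cc\beta_n&\cc\alpha_n\end{smallmatrix}\right)$ with $|\alpha_n|^2-|\beta_n|^2=1$, I would first check that Theorem~\ref{thmCharacteriseConvergence} even applies, i.e. that only finitely many $F_n$ are rotations. This is immediate: for a rotation $\beta_n=0$, whence $\dot{(F_n^{-1})}(x_0)=|\alpha_n|^{-2}=1$, which is incompatible with $\dot{(F_n^{-1})}(x_0)\to\infty$ once $n$ is large.

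Next I would translate the hypothesis. Recall the formula $\dot{(F_n^{-1})}(x_0)=|\alpha_n-\cc\beta_n x_0|^{-2}$, so that $\dot{(F_n^{-1})}(x_0)\to\infty$ is exactly the statement $|\alpha_n-\cc\beta_n x_0|\to 0$. From this single fact I would extract two consequences. By the reverse triangle inequality together with $|x_0|=1$ we get $|\alpha_n|-|\beta_n|\le|\alpha_n-\cc\beta_n x_0|\to 0$; combining this with the identity
\[
|\alpha_n|-|\beta_n|=\frac{|\alpha_n|^2-|\beta_n|^2}{|\alpha_n|+|\beta_n|}=\frac{1}{|\alpha_n|+|\beta_n|},
\]
we obtain $|\alpha_n|+|\beta_n|\to\infty$, and hence $|\beta_n|\to\infty$.

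Finally, using $d_n=\frac{\alpha_n}{\cc\beta_n}$ from Lemma~\ref{lemMobiology}, I would estimate the distance from $x_0$ to the centre $d_n$ directly:
\[
|x_0-d_n|=\frac{|\cc\beta_n x_0-\alpha_n|}{|\cc\beta_n|}=\frac{|\alpha_n-\cc\beta_n x_0|}{|\beta_n|}.
\]
Here the numerator tends to $0$ while the denominator tends to $\infty$, so $|x_0-d_n|\to 0$, that is $d_n\to x_0$. By the equivalence $(4)\Leftrightarrow(1)$ established in Theorem~\ref{thmCharacteriseConvergence}, the sequence $\{F_n\}_{n=1}^\infty$ represents $x_0$, which is what we want. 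There is no genuine obstacle here, as this is essentially a corollary; the only point deserving care is the verification that $|\beta_n|\to\infty$, since without it the displayed quotient need not vanish (the numerator alone tending to $0$ would not suffice).
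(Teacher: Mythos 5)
Your proof is correct and follows essentially the same route as the paper's: both translate the hypothesis via the formula $\dot{(F_n^{-1})}(x_0)=|\alpha_n-\cc\beta_n x_0|^{-2}$ into $|\alpha_n-\cc\beta_n x_0|\to 0$, obtain a lower bound on $|\beta_n|$ from the reverse triangle inequality and the normalization $|\alpha_n|^2-|\beta_n|^2=1$, conclude $d_n=\frac{\alpha_n}{\cc\beta_n}\to x_0$, and invoke part (4) of Theorem~\ref{thmCharacteriseConvergence}. If anything, you are slightly more careful than the paper, since you explicitly verify the ``only finitely many rotations'' hypothesis of that theorem and establish $|\beta_n|\to\infty$ rather than merely $|\beta_n|\geq 1$ eventually; either suffices for the final estimate.
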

\begin{proof}
Let
$F_n=\left(\begin{smallmatrix}\alpha_n&\beta_n\\\cc\beta_n&\cc\alpha_n\end{smallmatrix}\right)$,
$|\alpha_n|^2-|\beta_n|^2=1$.
Obviously, we have $|-\cc\beta_n x_0+\alpha_n|\to 0$. Because $|-\cc\beta_n
x_0+\alpha_n|\geq |\alpha_n|-|\beta_n|$, we obtain that 
$|\beta_n|\geq 1$ for all $n$ large enough.
Then from 
$$
\dot{(F^{-1}_n)}(x_0)=\frac1{|-\cc\beta_n x_0+\alpha_n|^2}
$$
we have $\frac{\alpha_n}{\cc\beta_n}\to x_0$. But
$\frac{\alpha_n}{\cc\beta_n}=d_n$, so $d_n\to x_0$. Now
 $\{F_n\}_{n=1}^\infty$ must represent $x_0$ by part (4) of Theorem
\ref{thmCharacteriseConvergence}.
\end{proof}

Note that the sequence $\{F^n\}_{n=1}^\infty$ with $F$ parabolic is a
counterexample to the converse of Corollary~\ref{corConvergencyTools}. This
sequence represents the fixed point $x$ of $F$, yet $\dot {(F^{-n})}(x)=1$ for
all $n$.

\begin{remark}
To show that the part (8) of Theorem~\ref{thmCharacteriseConvergence} 
can not be improved to include points on $\Te$, we give an 
example of a sequence $\{F_n\}_{n=1}^\infty$ of
MTs such that $\{F_n\}_{n=1}^\infty$  represents 
the point $1$ while the $\{F_n(z)\}_{n=1}^\infty$ does not converge to $1$.

Given the contraction $C_n$, denote $E_n=C_n^{-1}([i,-i])$. The set $E_n$ is an
interval such that $z\in E_n\Rightarrow \rho(C_n(z),1)>\frac{\pi}2$ (see
Figure~\ref{figEn}). 
\picture{The set $E_n$.}{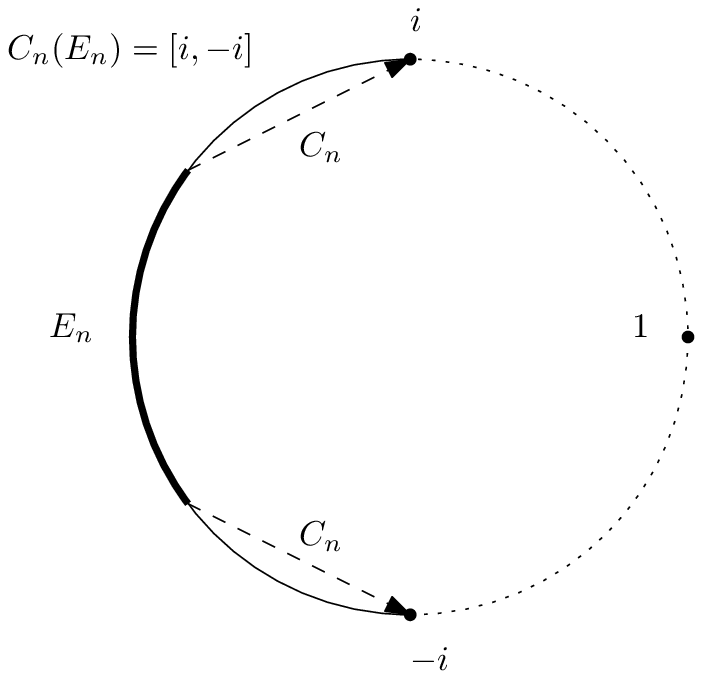}{figEn}
For
$n\in\en$, let $\alpha_{n,1},\alpha_{n,2},\dots,\alpha_{n,m_n}$ be angles of
rotation such
that $\bigcup_{i=1}^{m_n} R^{-1}_{\alpha_{n,i}}(E_n)=\Te$. It remains to
consider the sequence of transformations 
\[
C_1\circ R_{\alpha_{1,1}},C_1\circ R_{\alpha_{1,2}},\dots,C_1\circ
R_{\alpha_{1,m_1}},C_2\circ R_{\alpha_{2,1}},\dots,C_2\circ
R_{\alpha_{2,m_2}},\dots
\]
This sequence represents the point $1$ by Corollary~\ref{corConvergencyTools}
(although the speed of convergence is quite low). Moreover, for all $z\in\Te$ and all $n$ there exists $i$ such that 
$z\in R^{-1}_{\alpha_{n,i}}(E_n)=(C_n\circ R_{\alpha_{n,i}})^{-1}([i,-i]),$
so images of $z$ do not converge to 1.

\end{remark}

In contrast to the above construction, we can always achieve  pointwise
convergence almost everywhere by taking subsequences:

\begin{corollary}
If $\{F_n\}_{n=1}^\infty$ represents $x$ then there exists a subsequence
$\{F_{n_k}\}_{k=1}^\infty$ such
that $F_{n_k}(z)\to x$ almost everywhere for $k\to\infty$.
\end{corollary}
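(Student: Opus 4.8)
The plan is to reduce this statement to the classical fact from measure theory that convergence in measure implies almost-everywhere convergence along some subsequence (Riesz's theorem). The bridge to that setting is already available: by the equivalence $(1)\Leftrightarrow(10)$ in Theorem~\ref{thmCharacteriseConvergence}, the hypothesis that $\{F_n\}_{n=1}^\infty$ represents $x$ is exactly the statement that $\{F_n\}_{n=1}^\infty$ converges to the constant map $c_x\colon z\mapsto x$ in measure with respect to the uniform probability measure $\mu$ on $\Te$. Thus I may work entirely with convergence in measure and forget the M\"obius structure.

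First I would extract the subsequence explicitly. For each $k\in\en$, convergence in measure lets me choose an index $n_k$, taken strictly increasing in $k$, such that
\[
\mu\bigl(\{z\in\Te:\rho(F_{n_k}(z),x)>2^{-k}\}\bigr)<2^{-k}.
\]
Writing $E_k$ for the set inside the measure, the series $\sum_{k=1}^\infty\mu(E_k)$ then converges, since its terms are dominated by $2^{-k}$.

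Next I would apply the Borel--Cantelli lemma to the sets $E_k$. As $\sum_k\mu(E_k)<\infty$, the set $N=\bigcap_{m=1}^\infty\bigcup_{k\geq m}E_k$ of points belonging to infinitely many of the $E_k$ satisfies $\mu(N)=0$. For every $z\notin N$ there is an $m$ with $z\notin E_k$ for all $k\geq m$, so that $\rho(F_{n_k}(z),x)\leq 2^{-k}$ for all such $k$, and hence $F_{n_k}(z)\to x$. This yields $F_{n_k}(z)\to x$ for all $z$ outside the null set $N$, which is precisely the claimed almost-everywhere convergence.

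I do not expect a genuine obstacle here: once Theorem~\ref{thmCharacteriseConvergence} has been invoked to pass to convergence in measure, the remainder is the standard Riesz argument, and the only freedom (the choice of the decay rates $2^{-k}$) is immaterial. The one point deserving a little care is that the statement concerns the fixed uniform measure $\mu$, so it matters that part (10) of Theorem~\ref{thmCharacteriseConvergence} is phrased in terms of $\mu$; no other measure enters the argument.
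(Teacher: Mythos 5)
Your proof is correct and takes essentially the same route as the paper: both pass through the equivalence $(1)\Leftrightarrow(10)$ of Theorem~\ref{thmCharacteriseConvergence} to turn representation of $x$ into convergence in measure to the constant map $c_x$, and then apply the Riesz subsequence theorem. The only difference is that the paper cites that theorem from the literature, whereas you reprove it inline via the standard choice of indices with $\mu(E_k)<2^{-k}$ and Borel--Cantelli; this is a matter of presentation rather than substance.
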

\begin{proof}
As the sequence $\{F_n\}_{n=1}^\infty$ represents $x$, it converges in measure
to the constant function $c_x(z)=x$. Applying the Riesz theorem (different from
the Riesz representation theorem) from
\cite[page 47] {malylukes}, we obtain that there exists a subsequence
$\{F_{n_k}\}_{k=1}^\infty$ such
that $F_{n_k}(z)\to c_x(z)=x$ for almost all $z$.
\end{proof}
\begin{remark}
When representing the real line, analogous results hold. Instead of the
interior of $\De$ we have the upper half plane $\{z\in\ce:\Im(z)> 0\}$ and
instead of $\mu$ we can take either the image of $\mu$ under the stereographic
projection, or any other Borel probabilistic measure that is absolutely
continuous with respect to the Lebesgue measure on $\er$.
\end{remark}

\section{Möbius number systems}\label{chapMNS}
\subsection{Basic definitions and examples}\label{secBasic}
Let $A$ be an alphabet. 
Assume we assign to every $a\in A$ a Möbius transformation $F_a$. The
set $\{F_a:a\in A\}$ is then called a \emph{Möbius iterative system}. Given an
iterative system, we 
assign to each word $v\in A^n$ the mapping $F_v=F_{v_0}\circ
F_{v_1}\circ\cdots\circ F_{v_{n-1}}$. 

\begin{definition}
Given $w\in\Aomega$, we define $\Phi(w)$ as the point $x\in\Te$ such that
the sequence $\{F_{w_{[0,n)}}\}_{n=1}^\infty$ represents $x$. If
$\{F_{w_{[0,n)}}\}_{n=1}^\infty$ does not represent
any point in $\Te$, let $\Phi(w)$ be undefined. Denote the 
domain of the resulting map $\Phi$ by $\ex_F$.
\end{definition}

\begin{definition}
The subshift $\Sigma\subset\Aomega$ is a \emph{Möbius number system} for a
given Möbius iterative system if
$\Sigma\subset \ex_F$, $\Phi(\Sigma)=\Te$ and $\Phi_{|\Sigma}$ is
continuous.
\end{definition}

Using Corollary~\ref{corCompose}, we observe that if $\Phi(w)=x$ then
$\Phi(\sigma(w))=F^{-1}_{w_0}(x)$. We will use this simple property later.

We now give three examples of Möbius number systems, although the proof that
they indeed are Möbius number systems will have to wait until
Section~\ref{secExamples2} when we have suitable tools.

\begin{example}\label{exp3parabolic}
Let $A,B,C$ be three 
vertices of an equilateral triangle inscribed in $\Te$.
Take $F_a,F_b,F_c$ the three parabolic transformations satisfying.
\begin{eqnarray*}
F_a(A)=A,& F_a(C)=B\\
F_b(B)=B,& F_b(A)=C\\
F_c(C)=C,& F_c(B)=A.\\
\end{eqnarray*}
See Figure~\ref{fig3parabolic}. A quick calculation reveals that $F_a,F_b,F_c$
are in fact uniquely determined by the triangle $ABC$.

Let us define the shift $\Sigma$ by the three forbidden factors $ac,ba,cb$. We claim that
$\Sigma$ is a Möbius number system for the iterative system
$\{F_a,F_b,F_c\}$. 

\picture{The three parabolic maps system}{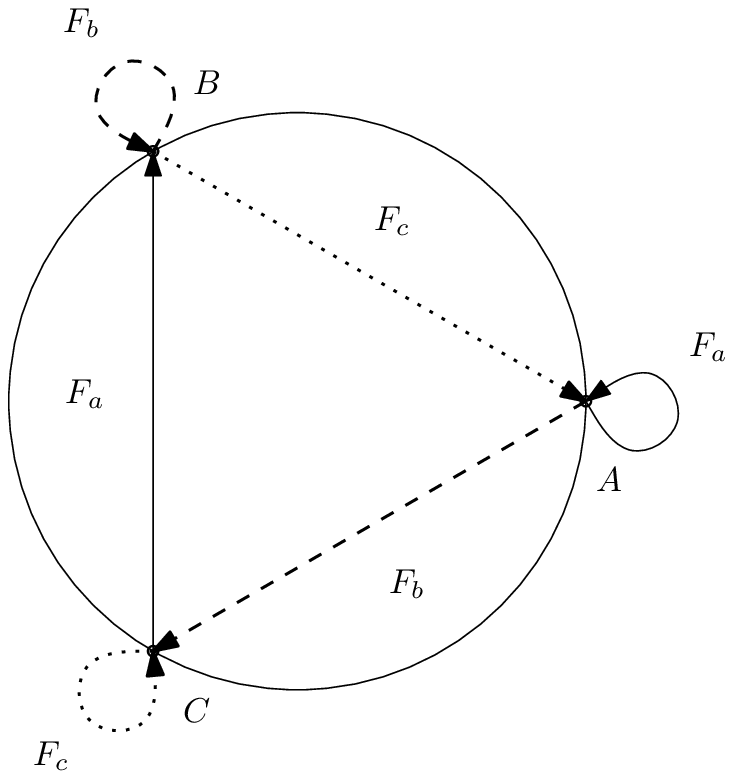}{fig3parabolic}
\end{example}

The following two examples are due to Petr K\r urka, see~\cite{Kurka2}:

\begin{example}\label{expRCF}
The connection between MTs and continued fraction systems is well known. 
We show how to implement continued fractions as a Möbius number system.
Let us take the following three transformations:
\begin{eqnarray*}
\hat{F}_{\one}(x)&=&x-1\\
\hat F_0(x)&=&-\frac{1}x\\
\hat F_1(x)&=&x+1.
\end{eqnarray*} 
These transformations are not disc preserving; instead, they preserve the
upper half plane (representing $\er\cup\{\infty\}$ instead of $\Te$). 
Conjugating $\hat{F}_{\one},\hat F_0,\hat F_1$ with the stereographic projection, we obtain
the following three disc preserving transformations:

\begin{eqnarray*}
F_{\one}&=&
\frac12
\begin{pmatrix}
2-i&-1\\
-1&2+i\\
\end{pmatrix}\\
F_0&=&\begin{pmatrix}
-i&0\\
0&i\\
\end{pmatrix}\\
F_1&=&
\frac12
\begin{pmatrix}
2+i&1\\
1&2-i\\
\end{pmatrix}
\end{eqnarray*}
Words $00,1\one$ and $\one1$  correspond to the identity maps while
$\Phi((01)^\infty)$ and $\Phi((0\one)^\infty)$ are not defined (as $F_{01}$ and $F_{0\one}$
are parabolic). This is why we define the shift $\Sigma$ by the set of
forbidden words $00,1\one,\one1,101,\one0\one$.

It turns out that $\Sigma$ is the regular continued fraction system as depicted
in Figure~\ref{fig-cf}. In this picture, the labelled points represent the
images of the point $0$
under the corresponding sequence of transformations, while curves connect images of $0$
that are next to each other in a given sequence. Observe that the images of $0$
converge to the boundary of the disc, ensuring convergence.

A slight complication not present in the usual continued fraction system is
that we need to juggle with signs, using the transformation $-1/x$ instead of
$1/x$ because the latter does not preserve the unit disc (the map $x\mapsto 1/x$ preserves the
unit circle but turns the disc inside out). Otherwise, the function
$\Phi_{|\Sigma}:\Sigma\to\Te$ 
mirrors the usual continued fraction numeration process.

\zoomedpicture{The regular continued fractions (as appear in Figure 3 in
\cite{Kurka2})}{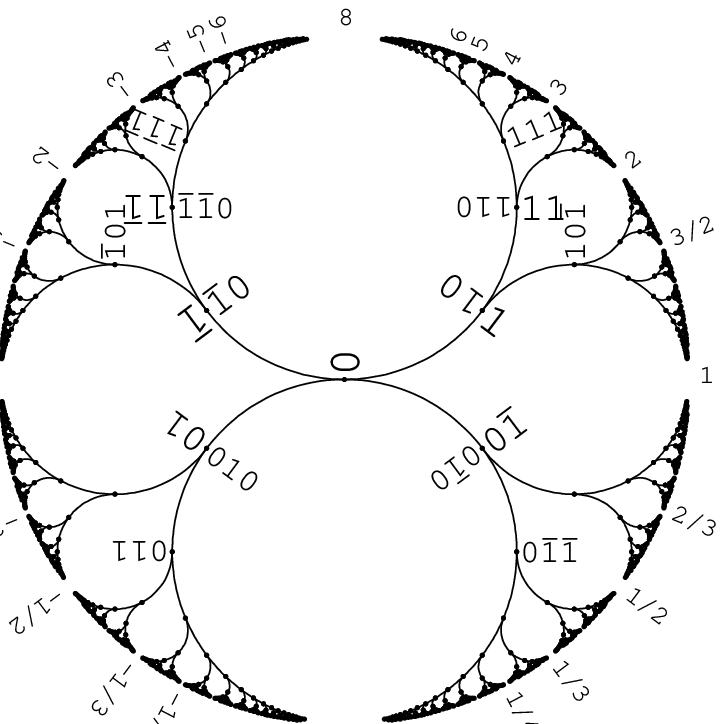}{fig-cf}{4 in}
\end{example}

\begin{remark}
Even a quick glance on Figure~\ref{fig-cf} reveals that parts of the circle
seem to be missing. While $\Phi$ is indeed surjective, the convergence of the
images of $0$ is sometimes quite slow in this system and so the depth used in
the computer graphics was not enough to get near certain points. We can improve the
speed of convergence by adding more transformations like in \cite{Kurka2}.
\end{remark}

\begin{example}\label{expbinary}
As a last example, we obtain a circle variant of the signed binary number
system. Take the following four upper half plane preserving
transformations:

\begin{eqnarray*}
\hat F_{\one}(x)&=&(x-1)/2\\
\hat F_0(x)&=&x/2\\
\hat F_1(x)&=&(x+1)/2\\
\hat F_2(x)&=&2x.
\end{eqnarray*}
Again, we conjugate these MTs with the stereographic projection to be disc preserving:
\begin{eqnarray*}
F_{\one}(x)&=&
\frac1{2\sqrt{2}}
\begin{pmatrix}
3-i&-1-i\\
-1+i&3+i\\
\end{pmatrix}\\
F_0(x)&=&
\frac1{2\sqrt{2}}
\begin{pmatrix}
3&-i\\
i&3\\
\end{pmatrix}\\
F_1(x)&=&
\frac1{2\sqrt{2}}
\begin{pmatrix}
3+i&1-i\\
1+i&3-i\\
\end{pmatrix}\\
F_2(x)&=&
\frac1{2\sqrt{2}}
\begin{pmatrix}
3&i\\
-i&3\\
\end{pmatrix}\\
\end{eqnarray*}

We take these transformations as our iterative system and then define the shift
$\Sigma\subset \{0,1,\one,2\}^\omega$ by forbidding the words
$20,02,12,\one2,1\one$ and $\one1$. 

Why are we forbidding these words?
The reason for disallowing $2$
and $0$ next to each other is that these transformations are inverse to each
other. The first four forbidden pairs ensure that twos are going to appear only at
the beginning of any word, making the system easier to study,
 while the last two forbidden words ensure continuity of
the function $\Phi$ at $2^\infty$ and, as a side-effect, make the
representation nicer (in cryptography, for example, we often wish to
only deal with redundant representations of integers without $1$ and $\one$ next to each other).

The result is the Möbius number system depicted in Figure~\ref{fig:binary}. On
$[-1,1]$, this is essentially the redundant binary system with $\one$ playing the
role of the digit $-1$. To represent numbers outside of $[-1,1]$, we use $F_2$. 

\zoomedpicture{The Möbius signed binary system (as appears in Figure 1 in
\cite{Kurka2})}{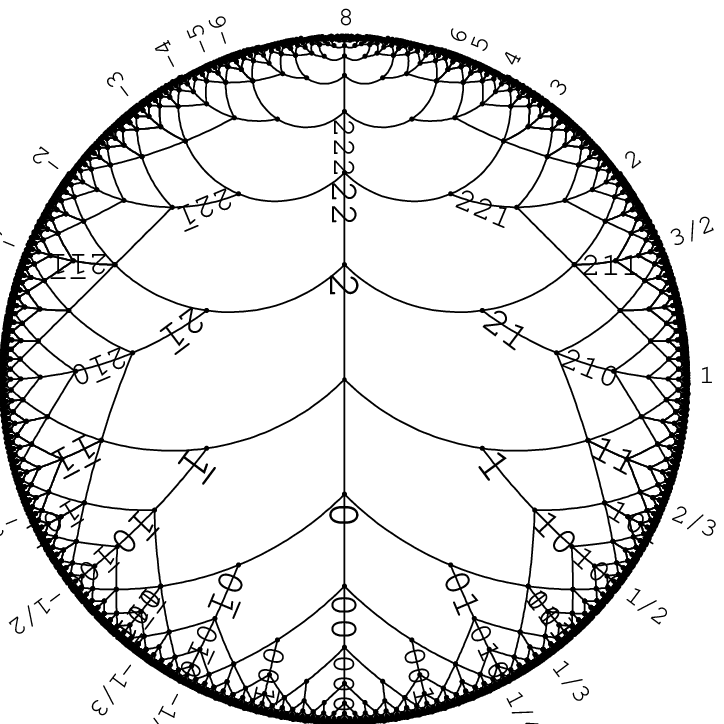}{fig:binary}{4 in}
\end{example}

\subsection{Systems defined by intervals}\label{secIntervalSystems}

This is the main part of our paper. Our goal here is to obtain sufficient
conditions guaranteeing that an iterative Möbius system together with a
subshift form a Möbius number system. All the subshifts we consider here are
defined using the notion of interval almost cover.

\begin{definition}
Let $\{F_a:a\in A\}$ be a Möbius iterative system.
\emph{Interval almost cover} of $\Te$ is any family $\We=\{W_a:a\in A\}$ of sets
such that $\bigcup\{\oW_a:a\in A\}=\Te$ and each $W_a$ is an union of finitely
many open intervals on $\Te$.
\end{definition}

Given an interval almost cover $\We$ and $u\in \Aplus$, we define the \emph{refined set} $W_u$ as
\[
W_u=W_{u_0}\cap F_{u_0}(W_{u_1})\cap\dots\cap F_{u_{[0,|u|-1)}}(W_{|u|-1}).
\]

Note that each refined set is again a finite (possibly empty) union of open intervals.
For formal reasons, let $W_\lambda=\Te$. 
It is easy to prove by induction that for all $u,v\in\Astar$, we have
$W_{uv}=W_u\cap F_u(W_v)$.
 
\begin{lemma}\label{lemExpansion}
Let $W_a\subset V_a$ for every $a\in A$. Then $W_u\subset V_u$ for any $u\in\Astar$.
\end{lemma}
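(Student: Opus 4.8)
The plan is to prove the inclusion by induction on the length of $u$, exploiting the concatenation identity $W_{uv}=W_u\cap F_u(W_v)$ established just above the lemma. The same identity holds verbatim for the family $\{V_a:a\in A\}$ and its refined sets $V_u$: the refinement formula is identical and only the generating sets differ, so the induction that proves the identity for the $W$'s applies equally to the $V$'s. This symmetry is what lets me invoke the splitting on both sides.

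For the base case, when $|u|=0$ we have $W_\lambda=\Te=V_\lambda$, so the inclusion is trivial; when $|u|=1$ the hypothesis $W_a\subset V_a$ is exactly the claim. For the inductive step, I assume $W_v\subset V_v$ for every word $v$ of length $n$ and let $u$ have length $n+1$. Writing $u=av$ with $a=u_0\in A$ and $v=u_{[1,n]}$ of length $n$, the concatenation identity gives
\[
W_u=W_a\cap F_a(W_v),\qquad V_u=V_a\cap F_a(V_v).
\]
By hypothesis $W_a\subset V_a$, and by the inductive assumption $W_v\subset V_v$. Since every M\"obius transformation is a bijection on $\Ce$, applying $F_a$ preserves inclusions, so $F_a(W_v)\subset F_a(V_v)$. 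Intersecting the two inclusions yields $W_a\cap F_a(W_v)\subset V_a\cap F_a(V_v)$, that is $W_u\subset V_u$, completing the induction.

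I expect no genuine obstacle here. The only points requiring a moment's care are to record that the refinement formula for the $V_u$ coincides with that for the $W_u$ (so that the concatenation identity may be applied on both sides simultaneously), and to note that $F_a$ being a bijection is precisely what allows inclusions to be pushed through it. Everything else is routine monotonicity of intersection under inclusion.
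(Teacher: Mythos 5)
There is a genuine gap here, and it lies in the meaning of $V_u$. For a single letter $a$, the set $V_a$ is the expansion interval of $F_a^{-1}$, and for a word $u$ the symbol $V_u$ in this lemma again denotes the expansion interval of the \emph{composed} map, $V_u=\{x\in\Te:\dot{(F_u^{-1})}(x)>1\}$ --- not the refined set built from the family $\{V_a:a\in A\}$. The paper stresses this distinction explicitly later (just before Observation~\ref{obsNotRotation}): the refined set and the expanding interval of $F^{-1}_{w_{[0,k)}}$ are different objects in general, and Lemma~\ref{lemExpansion} is exactly the statement that the former is contained in the latter. Your proof treats $V_u$ as the refined set of the cover $\{V_a\}$, so what you actually prove is a monotonicity statement about refined sets: if $W_a\subset V_a$ for all $a$, then the $\We$-refined set of $u$ is contained in the $\{V_a\}$-refined set of $u$. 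That statement is true and your induction for it is fine, but it is not the lemma. In particular it cannot do the lemma's job in the paper: in Observation~\ref{obsNotRotation} one needs nonemptiness of the expansion interval of $F_{w_{[k,k+1]}}^{-1}$ to conclude that $F_{w_{[k,k+1]}}$ is not a rotation, and in Theorem~\ref{thmClosedCover2} the lemma is applied with $W_a=V_a$ --- precisely the case in which your argument degenerates to the tautology that the refined set is contained in itself, while the intended content (refined set $\subset$ expansion interval of the composition) is left unproven.

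Relatedly, your auxiliary claim that the concatenation identity ``holds verbatim for the $V$'s'' is false under the correct reading: for expansion intervals one only has the inclusion $V_{uv}\supset V_u\cap F_u(V_v)$, because
\[
\dot{(F_{uv}^{-1})}(x)=\dot{(F_v^{-1})}\bigl(F_u^{-1}(x)\bigr)\cdot\dot{(F_u^{-1})}(x)
\]
can exceed $1$ even when one of the two factors is at most $1$. This one-sided inclusion is, however, exactly what the correct proof needs, and it is how the paper argues: by induction on $|u|$, writing $u=va$, take $x\in W_{va}=W_v\cap F_v(W_a)$; then $x\in W_v\subset V_v$ gives $\dot{(F_v^{-1})}(x)>1$, and $F_v^{-1}(x)\in W_a\subset V_a$ gives $\dot{(F_a^{-1})}(F_v^{-1}(x))>1$, so by the chain rule $\dot{(F_{va}^{-1})}(x)>1$, i.e.\ $x\in V_{va}$. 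Your induction skeleton survives, but the derivative/chain-rule step is the essential content of the lemma and cannot be replaced by set-theoretic manipulation of refined sets.
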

\begin{proof}
We proceed by induction on the length of $u$.

For $|u|=1$, the claim is obvious. Let $u=va$ with $W_v\subset V_v$. Then
$W_{va}=W_v\cap F_v(W_a)$ and $\dot{(F_{va}^{-1})}(x)=\dot{(F^{-1}_{a})}(F^{-1}_v(x))\cdot \dot
{(F^{-1}_v)}(x)$. When $x\in W_{va}$, we have $x\in W_v\subset V_v$ and $F^{-1}_v(x)\in
W_a\subset V_v$, therefore $\dot{(F^{-1}_{a})}(F^{-1}_v(x)),\dot{(F^{-1}_v)}(x) >1$, proving the lemma.
\end{proof}

\begin{definition}
Let $\We$ be an interval almost cover. A subshift $\Sigma$ is \emph{compatible} with
$\We$ if for every $v\in\el(\Sigma)$ it is true that
\[
\oW_v= \bigcup_{a,\,va\in\el(\Sigma)}\oW_{va}.
\]
\end{definition}

Note that the ``$\supset$'' inclusion in the last equality is trivial as
$\oW_{va}\subset \oW_v$ for all $a$.

The meaning of the compatibility condition is, roughly speaking, that we can
safely extend words of $\el(\Sigma)$. Note in particular that $\Aomega$ is compatible
with all the interval almost covers on $A$:
\[
\bigcup_{va\in\el(\Sigma)}\oW_{va}=\bigcup_{va\in\el(\Sigma)}\left(
\oW_v\cap F_v(\oW_a)
\right)=\oW_v\cap F_v \left(\bigcup_{a\in A)}\oW_a\right)=\oW_v\cap
F_v(\Te)=\oW_v
\]

Let $\We$ be an interval almost cover and $\Sigma$ a subshift compatible with $\We$.
We then define several other entities:
\begin{eqnarray*}
\Sigma_\We&=&\{w\in \Sigma:\forall n,\, W_{w_{[0,n)}}\neq \emptyset\}\\
q(u)&=&\min\{\dot{(F^{-1}_{u})}(x): x\in \oW_u\}\\
Q_n(\We,\Sigma)&=&\min\{q(u):|u|=n,u\in\el(\Sigma_\We)\}
\end{eqnarray*}

We will call $\Sigma_\We$ the \emph{interval shift} corresponding to $\Sigma$
and $\We$ (note that $\Sigma_\We$ depends on both). Obviously,
$\Sigma_\We\subset\Sigma$. We show that $\Sigma_\We$ is a subshift:

\begin{enumerate}
\item Let $w\in\Sigma_\We$. Then $\sigma(w)\in\Sigma$ and for all $n$ we have
$\emptyset \neq W_{w_{[0,n)}}\subset F_{w_0}(W_{w_{[1,n)}})$, implying
$W_{w_{[1,n)}}\neq \emptyset$.
Therefore, $\sigma(w)\in\Sigma_\We$.
\item Let $\{w^{(n)}\}_{n=1}^\infty$ be a sequence of words in $\Sigma_\We$ with
the limit $w$. As $\Sigma$ is closed, $w\in\Sigma$. Moreover, for every $k$ there exists an $n$ such that
$w_{[0,k)}=w^{(n)}_{[0,k)}$ and so $W_{w_{[0,k)}}=W_{w^{(n)}_{[0,k)}}\neq \emptyset$. Therefore
$w\in\Sigma_\We$ and so $\Sigma_\We$ is closed.
\end{enumerate}

\begin{remark} We now make an observation that will be useful later.
Consider two interval almost covers $\We=\{W_a:a\in A\}$ and $\We'=\{W'_a:a\in
A\}$ and two subshifts $\Sigma'\subset\Sigma$ such that $\We$ is compatible
with $\Sigma$ and $\We'$ is compatible with $\Sigma'$. If for each $a\in A$ we have
$W'_a\subset W_a$ then easily $\Sigma'_{\We'}\subset \Sigma_{\We}$. 
\end{remark}

Our main goal will be to prove the following theorem:
\begin{theorem}\label{thmClosedCover1}
Let $\{F_a:a\in A\}$ be a Möbius iterative system. Assume that $\We$ is such an
interval almost cover that $W_a\subset V_a$ for all $a\in A$ and $\Sigma$ is a subshift
compatible with $\We$. Then
$\Sigma_\We$ is a Möbius number system for the iterative system $\{F_a:a\in A\}$.

Moreover, for every $v\in \Astar$, $\Phi([v]\cap\Sigma_\We)=\oW_v$.
\end{theorem}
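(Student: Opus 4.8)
The goal is to show that $\Sigma_\We$ is a M\"obius number system, which means verifying three things: that $\Sigma_\We \subset \ex_F$ (every $w\in\Sigma_\We$ represents some point), that $\Phi(\Sigma_\We)=\Te$ (surjectivity), and that $\Phi_{|\Sigma_\We}$ is continuous. The crucial auxiliary claim $\Phi([v]\cap\Sigma_\We)=\oW_v$ will be the engine that drives all three, so I would aim to establish it and the membership-in-$\ex_F$ claim together. The whole argument hinges on the hypothesis $W_a\subset V_a$ together with Lemma~\ref{lemExpansion}, which gives $W_u\subset V_u$ for every $u$; this is the key structural fact, since $V_u$ is the expansion interval of $F_u^{-1}$ and membership there controls the derivative $\dot{(F_u^{-1})}$.

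\textbf{Representation and the nested-interval mechanism.}
First I would show that each $w\in\Sigma_\We$ represents a point. Fix such a $w$ and consider the refined sets $W_{w_{[0,n)}}$, which by definition are nonempty finite unions of open intervals, and which are nested in the sense $\oW_{w_{[0,n+1)}}\subset \oW_{w_{[0,n)}}$. The plan is to prove that the diameter of $\oW_{w_{[0,n)}}$ (more precisely, of its relevant component) tends to zero, forcing the closures to shrink to a single point $x$. This is where $W_u\subset V_u$ enters: because $x\in W_{w_{[0,n)}}\subset V_{w_{[0,n)}}$ forces $\dot{(F_{w_{[0,n)}}^{-1})}(x)>1$, and since expansion compounds along compositions, I expect the intervals $W_{w_{[0,n)}}$ to be expanded by $F_{w_{[0,n)}}^{-1}$ to have total length bounded by $2\pi$; turning this around, the forward images contract, so the $\oW_{w_{[0,n)}}$ collapse. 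Once I know $\bigcap_n \oW_{w_{[0,n)}}=\{x\}$, I would use Corollary~\ref{corConvergencyTools} or part (4)/(6) of Theorem~\ref{thmCharacteriseConvergence} to conclude that $\{F_{w_{[0,n)}}\}$ represents exactly this $x$, so $\Phi(w)=x$ is defined.

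\textbf{The identity $\Phi([v]\cap\Sigma_\We)=\oW_v$.}
For the inclusion $\Phi([v]\cap\Sigma_\We)\subset\oW_v$: if $w\in[v]\cap\Sigma_\We$ then $w_{[0,|v|)}=v$, so $\Phi(w)=\bigcap_n\oW_{w_{[0,n)}}\subset\oW_v$. The reverse inclusion $\oW_v\subset\Phi([v]\cap\Sigma_\We)$ is the substantive direction and the place where compatibility of $\Sigma$ with $\We$ does its work. Given a target point $x\in\oW_v$, I would build an admissible infinite word letter by letter: the compatibility condition $\oW_v=\bigcup_{a,\,va\in\el(\Sigma)}\oW_{va}$ guarantees that whenever $x\in\oW_v$ we may extend $v$ to some $va\in\el(\Sigma)$ with $x\in\oW_{va}$, and iterating produces $w\in\Sigma$ with $x\in\oW_{w_{[0,n)}}$ for all $n$, which in particular keeps all $W_{w_{[0,n)}}$ nonempty so that $w\in\Sigma_\We$. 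By the nested-interval argument above, $\Phi(w)$ is the unique point in $\bigcap_n\oW_{w_{[0,n)}}$, and since $x$ lies in every one of these closures, $\Phi(w)=x$. Taking $v=\lambda$ (so $\oW_\lambda=\Te$) immediately yields surjectivity $\Phi(\Sigma_\We)=\Te$.

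\textbf{Continuity and the main obstacle.}
Finally, continuity of $\Phi_{|\Sigma_\We}$: given $w$ and $\epsilon>0$, I want a cylinder $[w_{[0,n)}]$ whose $\Phi$-image lies within $\epsilon$ of $\Phi(w)$. Since $\Phi([w_{[0,n)}]\cap\Sigma_\We)\subset\oW_{w_{[0,n)}}$ by the identity just proved, and the diameters of $\oW_{w_{[0,n)}}$ tend to zero, choosing $n$ large enough that this diameter is below $\epsilon$ controls the image, giving continuity directly. The main obstacle I anticipate is the quantitative contraction claim underlying the nested-interval step: I must convert the pointwise statement ``$\dot{(F_u^{-1})}>1$ on $\oW_u$'' into a genuine shrinking of diameters uniformly enough that $\bigcap_n\oW_{w_{[0,n)}}$ is a single point rather than merely a decreasing sequence of intervals that could limit to a nondegenerate arc. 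This likely requires a careful use of the bound $|V_u|<\pi$ (Lemma~\ref{lemMobiology}(5)) and the fact that $F_u^{-1}$ expands $W_u\subset V_u$ into $\Te$ of total length $2\pi$, forcing $|W_{w_{[0,n)}}|\le 2\pi/\text{(expansion)}\to 0$; handling the case where refined sets are unions of several intervals, and ensuring the component containing the target point is the one that collapses, is the delicate bookkeeping I expect to occupy the heart of the proof.
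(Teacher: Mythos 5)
Your outer architecture matches the paper's: the identity $\Phi([v]\cap\Sigma_\We)=\oW_v$ is indeed obtained by iterating the compatibility condition to build an admissible word whose refined sets all contain the target point, surjectivity follows from $v=\lambda$, and continuity follows from the shrinking of the nested closures $\oW_{w_{[0,n)}}$. (The paper factors these through a separate special case, Theorem~\ref{thmClosedCover2} with $W_a=V_a$ and $\Sigma=\Aomega$, and tracks a whole interval $[x,x+\epsilon_k]$ rather than just the point $x$ via Lemma~\ref{lemCircleCover}, but that is a packaging difference, not a substantive one.)

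However, the core analytic claim that everything in your proposal funnels through --- that $\bigcap_n \oW_{w_{[0,n)}}$ is a singleton and that $w$ then represents that point --- is left unproved, and the mechanism you sketch for it is wrong. You argue that the pointwise expansion $\dot{(F^{-1}_u)}>1$ on $W_u$ ``compounds,'' forcing $|W_{w_{[0,n)}}|\le 2\pi/\text{(expansion)}\to 0$. But the expansion factor need not tend to infinity: $\dot{(F^{-1}_a)}$ equals $1$ exactly at the endpoints of $\oV_a$, and admissible orbits can cling to these endpoints forever. Example~\ref{exp3parabolic} is exactly such a case: the fixed point $A$ of the parabolic map $F_a$ is an endpoint of $V_a$, it lies in $\oW_{a^n}$ for every $n$, and $\dot{(F^{-1}_{a^n})}(A)=1$, so $Q_n(\We,\Sigma)=1$ for all $n$; there is no uniform contraction of the $\oW_{a^n}$, yet the theorem still holds. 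Moreover, even if you grant the singleton claim, your bridge to ``$w$ represents $x$'' is broken: parts (4)/(6) of Theorem~\ref{thmCharacteriseConvergence} concern $d_n$ and $V_n$ of the \emph{composed} maps $F_{w_{[0,n)}}$, and the inclusion $W_{w_{[0,n)}}\subset V_{w_{[0,n)}}$ from Lemma~\ref{lemExpansion} goes the wrong way --- small $\oW$ does not imply small $\oV$ --- while Corollary~\ref{corConvergencyTools} needs $\dot{(F^{-1}_{w_{[0,n)}})}(x)\to\infty$, which is precisely what can fail.

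The paper closes this gap with machinery your proposal does not contain: Lemma~\ref{lemGrowingInterval} produces, by compactness, a function $\psi$ with $\psi(l)>l$ that uniformly expands \emph{intervals of a fixed positive length} inside the $\oV_a$ (this sidesteps the failure of pointwise uniform expansion); Lemma~\ref{lemBrokenInterval} then shows two distinct points cannot both survive in the $\oV_{w_n}$'s unless the compositions degenerate into rotations, which Observation~\ref{obsNotRotation} excludes --- this is what makes the intersection a singleton. Representation itself requires a case split: if the derivative product $\prod_k \dot{(F^{-1}_{w_k})}(x_k)$ diverges, Corollary~\ref{corConvergencyTools} applies; if it stays bounded, the orbit $x_k$ must ride an endpoint $e^-_{w_k}$ of the $V$'s (this is where the no-alternation condition in the set $X$ is needed), and representation is then proved via part (3) of Theorem~\ref{thmCharacteriseConvergence}, by showing an interval $[x,x+\epsilon]$ eventually gets expanded to length at least $l=\min\{|\oV_a|\}$, guaranteeing a positive lower bound on $(F_{w_{[0,n)}}\mu)(J)$. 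This parabolic, endpoint-riding case is the actual heart of the proof, and it is the part your proposal flags as ``delicate bookkeeping'' but does not supply.
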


Our plan is to first prove several auxiliary claims, then solve the case when
$W_a=V_a$ and finally use this special case to prove
Theorem~\ref{thmClosedCover1}.

The following lemma is stated (in a different form) in \cite{Kurka} as
Lemma 2.

\begin{lemma}\label{lemGrowingInterval}
Let $\{F_a:a\in A\}$ be a Möbius iterative system and let $L$ be the length of the
longest of intervals in $\{\oV_a:a\in A\}$. Then there exists an increasing
continuous function $\psi:[0,L]\to\er$ such that:
\begin{enumerate}
\item $\psi(0)=0$
\item For every $l\in(0,L]$ we have $\psi(l)>l$.
\item If $I$ is an interval and $a\in A$ a letter such that $I\subset \oV_a$ then
$|F^{-1}_a(I)|\geq \psi(|I|)$.
\end{enumerate}
\end{lemma}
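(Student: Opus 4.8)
My plan is to reduce the statement to finitely many ``per-letter'' minimal-expansion functions, and then to glue and regularize them into a single $\psi$. First I would fix $a\in A$ and record the two facts I need about $F_a^{-1}$ on $\oV_a$: by the proof of Lemma~\ref{lemMobiology} we have $\dot{(F_a^{-1})}(x)>1$ on the open arc $V_a$ and $\dot{(F_a^{-1})}(x)=1$ exactly at its two endpoints, and for an interval $I\subset\oV_a$ the arc-length formula gives $|F_a^{-1}(I)|=\int_I \dot{(F_a^{-1})}(x)\,dx$. I would then define
\[
\psi_a(l)=\min\bigl\{\,|F_a^{-1}(I)| : I\subset\oV_a,\ |I|=l\,\bigr\},\qquad l\in[0,|V_a|].
\]
Parametrizing the intervals of length $l$ by their left endpoint shows the minimum is taken over a compact set on which $|F_a^{-1}(I)|$ depends continuously on that endpoint, so the minimum is attained and $\psi_a$ is continuous; clearly $\psi_a(0)=0$. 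For $l>0$ any minimizing $I$ has positive length, hence meets $V_a$ in a set of positive measure on which $\dot{(F_a^{-1})}>1$, giving $\psi_a(l)=|F_a^{-1}(I)|>|I|=l$.

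Next I would combine these finitely many functions over the alphabet. Extending each $\psi_a$ to $[0,L]$ by $\psi_a(l)=+\infty$ for $l>|V_a|$ (where no constraint is imposed) and setting $\Psi(l)=\min_{a\in A}\psi_a(l)$, I get a function that is finite on $[0,L]$ (some letter realizes the arc of length $L$), satisfies $\Psi(0)=0$ and $\Psi(l)>l$ on $(0,L]$, and obeys $|F_a^{-1}(I)|\ge\Psi(|I|)$ whenever $I\subset\oV_a$. As a minimum of finitely many lower semicontinuous functions, $\Psi$ is lower semicontinuous, but it is in general neither continuous nor monotone, since a letter leaves the minimum as $l$ crosses $|V_a|$. \textbf{The only real obstacle} is precisely this: manufacturing from $\Psi$ a genuinely continuous and increasing $\psi$ that lies below $\Psi$ yet stays strictly above the identity.

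I would handle that softly. Put $\phi(l)=\Psi(l)-l$, which is lower semicontinuous, bounded, nonnegative, with $\phi(0)=0$ and $\phi>0$ on $(0,L]$. Define the running infimum $c(l)=\inf_{t\in[l,L]}\phi(t)$; since $\phi$ is lower semicontinuous and strictly positive on the compact set $[l,L]$, this infimum is attained and positive, so $c>0$ on $(0,L]$, while $c$ is nondecreasing with $c\le\phi$. Being monotone, $c$ is integrable, and I would set
\[
\psi(l)=l+\frac1L\int_0^l c(t)\,dt.
\]
This $\psi$ is continuous, strictly increasing (the integrand is positive), and $\psi(0)=0$, with $\psi(l)>l$ for $l>0$. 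Finally, using that $c$ is nondecreasing and $l\le L$,
\[
\psi(l)\le l+\frac1L\,l\,c(l)\le l+c(l)\le l+\phi(l)=\Psi(l),
\]
so $|F_a^{-1}(I)|\ge\Psi(|I|)\ge\psi(|I|)$ whenever $I\subset\oV_a$, which is exactly condition (3). I expect the per-letter step to be routine once the boundary behaviour of $\dot{(F_a^{-1})}$ from Lemma~\ref{lemMobiology} is in hand; the delicate point, where I would be careful, is the regularization — ensuring simultaneously continuity, monotonicity, and the strict estimate $\psi(l)>l$ despite $\Psi$ having only lower semicontinuity and possible upward jumps.
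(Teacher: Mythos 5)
Your proof is correct, but it takes a genuinely different route from the paper's. The paper first invokes Lemma~\ref{lemUniForm} to assume, without loss of generality, that every $F_a$ is a contraction to $1$ with parameter $r_a>1$ (rotations do not change interval lengths, and letters with $r_a=1$ have $\oV_a=\emptyset$ and can be dropped); it then picks the \emph{single} letter $a$ with minimal $r_a$ --- equivalently the letter whose $\oV_a$ has length $L$ --- and defines $\psi(l)=\inf\{|F^{-1}_a(I)|:I\subset\oV_a,\ |I|=l\}$ from that one letter alone, leaving as an exercise (``by analyzing contractions'') both the regularity of this $\psi$ and the cross-letter comparison asserting that the weakest contraction is the worst case uniformly in $l$. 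You instead keep all letters, form the per-letter minimal-expansion functions $\psi_a$, take their pointwise minimum $\Psi$, and repair the possible failure of continuity by a lower-semicontinuity regularization (running infimum followed by an integral). What the paper's route buys is brevity and an explicit $\psi$; what it costs is the unproved monotonicity-in-$r$ comparison between different contraction parameters, which does require a computation (for instance, the expansion rate of $C_r^{-1}$ at the boundary of its expansion interval grows like $r-1/r$). Your route needs no comparison between letters at all --- only that $\dot{(F^{-1}_a)}\ge 1$ on $\oV_a$ with strict inequality on $V_a$ --- so it is more self-contained and would apply verbatim to any finite family of circle maps with that property, at the cost of the regularization machinery. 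One small remark: your caution that $\Psi$ may fail to be monotone is unnecessary. Since $\dot{(F^{-1}_b)}\ge 1$ on $\oV_b$, any interval of length $l'$ inside $\oV_b$ contains one of length $l<l'$ and the complementary piece expands at least isometrically, so $\psi_b(l')\ge\psi_b(l)+(l'-l)$ whenever $\psi_b(l')<\infty$; taking the letter realizing $\Psi(l')$ gives $\Psi(l')\ge\Psi(l)+(l'-l)$, so $\Psi$ is automatically strictly increasing and only continuity genuinely needs repair. This does not affect your argument, which never relies on monotonicity of $\Psi$.
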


\begin{proof}
Thanks to Lemma~\ref{lemUniForm} we can without loss of generality assume that
each $F_a$ is a contraction to 1 with parameter $r_a>1$ (if $r_a=1$ then
$\oV_a=\emptyset$ so $a$ can be safely omitted from the alphabet). 

Choose $a\in A$ so that $r_a$ is minimal and let
\[
\psi(l)=\inf\{|F^{-1}_a(I)|\,:\, I\subset \oV_a\; \mathrm{and}\; |I|=l\}.
\]

By analyzing contractions, it is easy to see that
$\psi$ is increasing, continuous and $\psi(l)>l$ for $l>0$.
\end{proof}

\begin{lemma}\label{lemBrokenInterval}
Let $\{F_a:a\in A\}$ be a Möbius iterative system. Let $x,y$ be points and $w\in\Aomega$ a word such that
for all $n$ we have $F^{-1}_{w_{[0,n)}}(x),F^{-1}_{w_{[0,n)}}(y)\in \oV_{w_n}$. Then $x=y$
or there exists an $n_0$ such that  $F_{w_{[n,n+1]}}$ is a rotation for all
$n>n_0$. 
\end{lemma}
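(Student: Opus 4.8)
The plan is to follow the orbits $x_n=F^{-1}_{w_{[0,n)}}(x)$ and $y_n=F^{-1}_{w_{[0,n)}}(y)$, which satisfy $x_{n+1}=F^{-1}_{w_n}(x_n)$ and $y_{n+1}=F^{-1}_{w_n}(y_n)$, and to show that if $x\neq y$ the separation between $x_n$ and $y_n$ is forced to grow unless the maps eventually degenerate to rotations. First I would record that each individual $F_{w_n}$ is a non-rotation: a rotation has $\dot{(F^{-1})}\equiv 1$, so $\oV_{w_n}=\emptyset$ and the hypothesis $x_n\in\oV_{w_n}$ would fail. Thus Lemma~\ref{lemMobiology} applies to every single letter, and in particular the shorter arc $I_n$ joining $x_n,y_n$ lies in $\oV_{w_n}$ and has length $\ell_n<\pi$.

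The key step is a multiplicative observation. By the chain rule $\dot{(F^{-1}_{w_{[0,n)}})}(x)=\prod_{k=0}^{n-1}\dot{(F^{-1}_{w_k})}(x_k)$, and every factor is $\geq 1$ because $x_k\in\oV_{w_k}$. Hence $\dot{(F^{-1}_{w_{[0,n)}})}(x)\geq 1$ and likewise at $y$, so both $x$ and $y$ lie in $\oV_{w_{[0,n)}}$ for every $n$. Consequently, by Lemma~\ref{lemMobiology}(5),(7), the \emph{fixed} shorter arc $I$ joining $x$ and $y$ is contained in $\oV_{w_{[0,n)}}$ for all $n$; this forces $|V_{w_{[0,n)}}|\geq |I|>0$, and by the Remark following Lemma~\ref{lemMobiology} (where $|V|$ is a decreasing function of $|\beta|$) the coefficient $\beta$ of $F_{w_{[0,n)}}$ stays bounded. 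Using $\dot{(F^{-1}_{w_{[0,n)}})}(x)=|-\cc\beta x+\alpha|^{-2}$ this bounds the products from above, so the increasing products $\prod_k \dot{(F^{-1}_{w_k})}(x_k)$ and $\prod_k\dot{(F^{-1}_{w_k})}(y_k)$ converge, whence $\dot{(F^{-1}_{w_k})}(x_k)\to 1$ and $\dot{(F^{-1}_{w_k})}(y_k)\to 1$.

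With this in hand I would run a length argument on $\ell_n$. Orientation preservation of $F^{-1}_{w_n}$ together with $\dot{(F^{-1}_{w_n})}\geq 1$ on $I_n\subseteq\oV_{w_n}$ and the bound $\ell_n\leq |V_{w_n}|$ yields $\ell_{n+1}\geq\ell_n$ in all cases, so $\ell_n$ is non-decreasing and bounded by $\pi$, hence convergent, and its increments tend to $0$. Next, observing that $x_n,y_n\in\oV_{w_{[n,n+1]}}$ as well (again a product of two factors $\geq 1$), I would apply Lemma~\ref{lemGrowingInterval} to the finite two-letter system $\{F_{ab}:a,b\in A,\ F_{ab}\text{ not a rotation}\}$, obtaining a gain function $\psi_2$ with $\psi_2(l)>l$. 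Whenever $F_{w_{[n,n+1]}}$ is not a rotation, the two-step image of $I_n$ has length $\geq\psi_2(\ell_n)$, a quantity bounded away from $\ell_n$ on the compact range of the $\ell_n$. Since the increments of the convergent sequence $\ell_n$ tend to $0$, such genuine expansions can occur only finitely often, so for all large $n$ the map $F_{w_{[n,n+1]}}$ must be a rotation; the dichotomy with $x=y$ follows.

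The main obstacle is the \emph{wrap-around} phenomenon: a priori $F^{-1}_{w_n}$ can send the shorter arc $I_n$ onto the \emph{longer} arc joining $x_{n+1},y_{n+1}$, which happens exactly when $x_n,y_n$ approach the two endpoints of $\oV_{w_n}$, and this breaks the naive length-expansion recursion. Closing this loophole is precisely where the multiplicative convergence of the second paragraph is indispensable: $\dot{(F^{-1}_{w_k})}(x_k)\to 1$ and $\dot{(F^{-1}_{w_k})}(y_k)\to 1$ are incompatible with the definite two-step expansion that a non-rotation $F_{w_{[n,n+1]}}$ would produce under wrap-around, so the wrap-around regime cannot sustain non-rotations. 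I expect reconciling the wrap-around case with the convergence of the derivative products to be the crux of the full write-up.
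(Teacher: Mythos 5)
Your monotone-length backbone is essentially the paper's own first step (the paper proves $|I_{n+1}|\geq|I_n|$ by exactly your dichotomy between genuine expansion via Lemma~\ref{lemGrowingInterval} and a wrap-around ``twist''), but two of your steps have holes, one repairable and one fatal. The repairable one is in your second paragraph: you deduce $x\in\oV_{w_{[0,n)}}$ from $\dot{(F^{-1}_{w_{[0,n)}})}(x)\geq 1$, and this implication requires $F_{w_{[0,n)}}$ not to be a rotation, since a rotation has derivative identically $1$ and \emph{empty} expansion interval. Prefix compositions can genuinely be rotations under the hypotheses: in the four-map hyperbolic system of Section~\ref{secQ} (where $F_2=F_0^{-1}$), take $w=(02)^\infty$, $x=e^{-i\pi/4}$ and $y=e^{-3i\pi/4}$, the two endpoints of $\oV_0$; then $x_n,y_n$ alternate between the endpoint pairs of $\oV_0$ and $\oV_2$, so the hypotheses hold, yet $F_{w_{[0,2k)}}=\id$ for every $k$, and your claims ``$x,y\in\oV_{w_{[0,n)}}$'' and ``$|V_{w_{[0,n)}}|\geq|I|>0$'' are false. (The conclusion that the factors tend to $1$ survives: if infinitely many prefixes are rotations, the nondecreasing products equal $1$ infinitely often, hence every factor is exactly $1$; otherwise run your argument from the last rotation prefix onward. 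But this case split must be made.)

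The fatal hole is the one you flag yourself: the wrap-around case is not reconciled but deferred, and the mechanism you propose for it cannot work as stated. Convergence of the factors controls $\dot{(F^{-1}_{w_{[n,n+1]}})}$ only \emph{at the two points} $x_n,y_n$, which lie near the endpoints of $I_n$, whereas the $\psi_2$-expansion produced by a non-rotation under wrap-around is generated by large derivatives in the \emph{interior} of $I_n$; these are compatible. Indeed, for any non-rotation $G=F_{ab}$ the interval $\oV_{ab}$ itself has $\dot{(G^{-1})}=1$ at both of its endpoints while $|G^{-1}(\oV_{ab})|=2\pi-|V_{ab}|>\pi$, so an interval can be expanded past half the circle even though the endpoint derivatives equal $1$. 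Hence ``factors $\to 1$'' and ``definite two-step expansion'' yield no contradiction by themselves; what is needed is a rigidity step upgrading approximate information to exact information, and this is precisely what the paper supplies and your proposal omits. The paper uses finiteness of the alphabet to show that the nondecreasing sequence $|I_n|$, whose values at twists eventually lie in the finite set $\{|V_a|:a\in A\}$, is eventually \emph{exactly} constant; exact constancy forces $x_n,y_n$ to be exactly the endpoints of $\oV_{w_n}$ and every step to be a twist; two consecutive twists then carry $I_n$ onto an interval of the same length, and composing with the rotation taking that interval back gives a map fixing $x_n,y_n$ with derivative $1$ at both, hence the identity, so $F_{w_{[n,n+1]}}$ is a rotation. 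Your softer data (increments $\to 0$, factors $\to 1$) could be upgraded in the same spirit, since the finitely many points $e^+_a,e^-_a,F^{-1}_a(e^+_a),F^{-1}_a(e^-_a)$ are either equal or at distance at least some $\kappa>0$, so ``within $\epsilon$'' forces ``equal'' for small $\epsilon$ --- but this finiteness-plus-rigidity argument is the core of the lemma, and it is exactly what is missing from your write-up.
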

\begin{proof}
Denote $x_n=\Finw(x)$ and $y_n=\Finw(y)$. Assume that $x\neq y$. For each $n$
denote by $I_n$ the closed interval with endpoints $x_n,y_n$ such that
$I_n\subset\oV_{w_n}$. Möbius transformations are bijective, so $x_n\neq y_n$,
implying $|I_n|>0$ for all $n$.

Observe first that the sequence $\{|I_n|\}_{n=0}^\infty$ is a nondecreasing one. For any
particular $n$ we have two possibilities: Either $F^{-1}_{w_n}(I_n)=I_{n+1}$ and therefore
$|I_{n+1}|\geq \psi(|I_n|)>|I_n|$ (by Lemma~\ref{lemGrowingInterval}), or
$I_{n+1}=\overline{\Te \setminus F^{-1}_{w_n}(I_n)}$;
see Figure~\ref{figOverflow}. In the second case, observe that
$\Te\setminus U_{w_n}\subset I_{n+1}$ and recall that
$|U_{w_n}|+|V_{w_n}|=2\pi$. It follows  $|I_{n+1}|\geq |V_{w_n}|\geq|I_n|$.
Call the second case a \emph{twist}.

\zoomedpicture{A twist.}{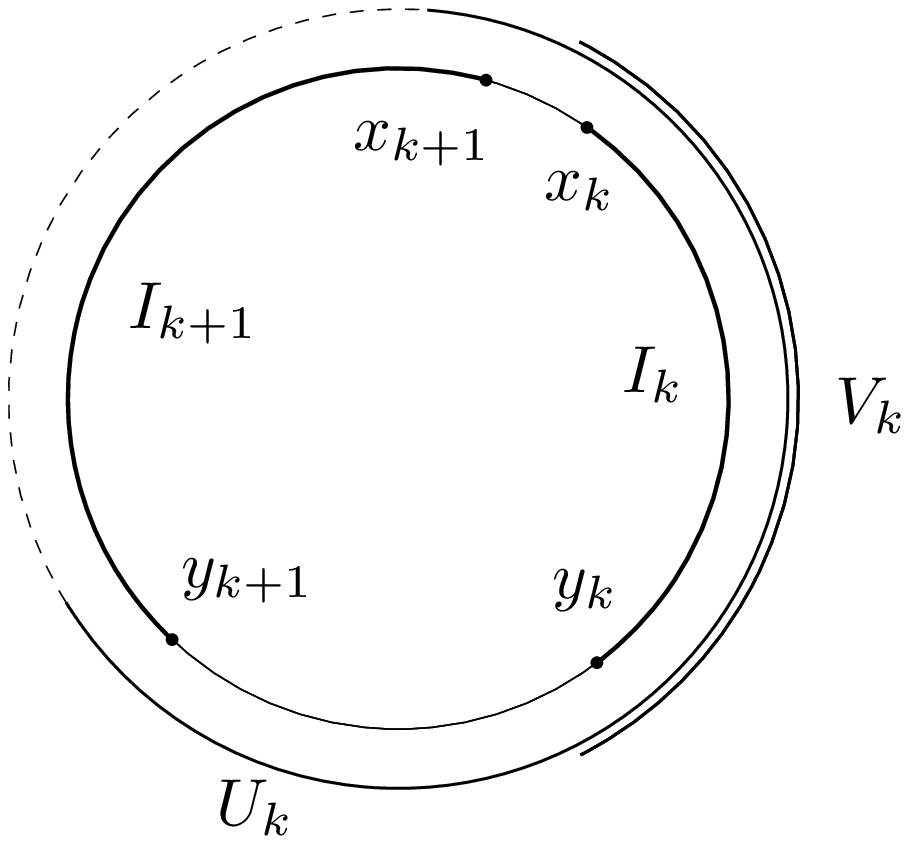}{figOverflow}{3 in}

Assume first that the number of twists is infinite. We claim that
then there must
exist an $n_0$ such that $|I_{n}|$ is constant for all $n>n_0$: Whenever 
a twist happens for some $n$, we have $|V_{w_{n+1}}|\geq|I_{n+1}|\geq
|V_{w_n}|$. If any inequality in the previous formula is sharp then the letter $w_n$ does not appear
anywhere in $w_{[n+1,\infty)}$ anymore, so if there were infinitely many such sharp
inequalities, the alphabet $A$ would have to be infinite.
Therefore, $|V_{w_{n+1}}|=|I_{n+1}|$ for all but finitely many
twists and the finiteness of $A$ gives us that that there is an $n_0$ such that
$|I_{n}|=|I_{n_0}|$ whenever $n>n_0$.

Assume now $n>n_0$. Simple case analysis shows that $|I_n|=|I_{n+1}|$ can only
happen when $x_n,y_n$ are
the endpoints of $\oV_{w_{n}}$ and the transition is a twist. Similarly,
$x_{n+2}, y_{n+2}$ are also endpoints of $\oV_{w_{n+1}}$. Let $R$ be the
rotation that sends $I_{n+2}$ to $I_n$. The map $R\circ F^{-1}_{w_{[n,n+1]}}$ has two fixed points $x_n,y_n$ and
\[
\dot{(R\circ F^{-1}_{w_{[n,n+1]}})}(x_n)=\dot{(R\circ
F^{-1}_{w_{[n,n+1]}})}(y_n)=1.
\]
 This can happen only when
$R\circ F^{-1}_{w_{[n,n+1]}}=\id$ (see the Remark after Definition~\ref{defMT}). Therefore, $F^{-1}_{w_{[n,n+1]}}$ is a
rotation for all $n>n_0$ and we are done.

It remains to investigate the case when the number of twists is finite, i.e.
there exists an $n_0$ such that for all $n\geq n_0$ we have
$I_{n+1}=F^{-1}_{w_n}(I_n)$. 
By Lemma~\ref{lemGrowingInterval} we obtain $|I_{n+1}|\geq \psi(|I_n|)$ for all
$n\geq n_0$ and therefore $|I_n|\geq \psi^{n-n_0}(|I_{n_0}|)$ for all $n\geq
n_0$. Denote $l=|I_{n_0}|>0$.

Consider now the sequence $\{\psi^{n}(l)\}_{n=0}^{\infty}$. Assume that 
$\psi^n(l)\leq L$ for all $n$. Then the sequence is increasing and bounded and
therefore it has a limit $\xi\in(0,L]$. As $\psi$ is continuous, $\psi(\xi)=\xi$. 
But the only fixed point of $\psi$ is $0$, a contradiction.

Therefore, there always exists an $n$ such that $\psi^{n}(l)>L$. But then
$I_{n+n_0}$ cannot possibly fit into any of the intervals $\oV_a$, which is a
contradiction with the assumption $x_{n+n_0},y_{n+n_0}\in \oV_{w_{n+n_0}}$. Therefore, $x=y$.
\end{proof}

Let $I$ be an interval on $\Te$. Recall that if $I=(a,b)$ then $a$ is the
clockwise and $b$ the counterclockwise endpoint of $I$. The following two easy
observations on the geometry of intervals are going to be useful when examining
our number system.

\begin{lemma}\label{lemIntersection}
Let $I_1,\dots, I_k$ be open intervals on $\Te$. Then $x\in\overline{\bigcap_{i=1}^k
I_i}$
if and only if both of the following conditions hold: 
\begin{enumerate}
\item $x\in \bigcap_{i=1}^k\overline {I_i}$
\item If $x$ is an endpoint of both $I_i$ and $I_j$ then $x$ may not be the counterclockwise endpoint 
of one interval and clockwise endpoint of the other.
\end{enumerate}
\end{lemma}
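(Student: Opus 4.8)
The plan is to prove the two implications separately, reducing everything to a case analysis on how $x$ sits relative to each $\overline{I_i}$. By the convention fixed in the Preliminaries, every interval $I_i=(a_i,b_i)$ is a proper open arc with distinct endpoints $a_i\neq b_i$, so for $x\in\overline{I_i}=[a_i,b_i]$ exactly one of three mutually exclusive cases occurs: $x$ lies in the interior $I_i$, or $x=a_i$ is the clockwise endpoint, or $x=b_i$ is the counterclockwise endpoint. I would record this trichotomy at the outset and split the index set $\{1,\dots,k\}$ accordingly into three disjoint classes $S_{\mathrm{int}}$, $S_{\mathrm{cw}}$ and $S_{\mathrm{ccw}}$.

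For necessity, suppose $x\in\overline{\bigcap_{i=1}^k I_i}$. Condition (1) is immediate from monotonicity of the closure: since $\bigcap_j I_j\subset\overline{I_i}$ and $\overline{I_i}$ is closed, we get $\overline{\bigcap_j I_j}\subset\overline{I_i}$, hence $x\in\overline{I_i}$ for every $i$. For condition (2) I would argue by contraposition. If $x$ were simultaneously the counterclockwise endpoint of some $I_i=(a_i,x)$ and the clockwise endpoint of some $I_j=(x,b_j)$, then a sufficiently small arc around $x$ would miss $I_i\cap I_j$ entirely: points $x-\epsilon$ lie in $I_i$ but not in $I_j$, points $x+\epsilon$ lie in $I_j$ but not in $I_i$, and $x$ itself lies in neither open arc. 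Thus a neighborhood of $x$ avoids $I_i\cap I_j\supset\bigcap_m I_m$, contradicting $x\in\overline{\bigcap_m I_m}$.

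For sufficiency, assume (1) and (2). Condition (2) says precisely that $S_{\mathrm{cw}}$ and $S_{\mathrm{ccw}}$ cannot both be nonempty, so at least one of them is empty, and I would approach $x$ from the admissible side. If $S_{\mathrm{ccw}}=\emptyset$, consider the points $x+\epsilon$ for small $\epsilon>0$: for $i\in S_{\mathrm{int}}$ interiority gives $x+\epsilon\in I_i$ once $\epsilon$ is small, and for $i\in S_{\mathrm{cw}}$ we have $I_i=(x,b_i)$, so again $x+\epsilon\in I_i$ as soon as $\epsilon$ is below the length of $I_i$. Because the family is \emph{finite}, a single threshold $\epsilon_0>0$ serves all indices at once, so $x+\epsilon\in\bigcap_i I_i$ for every $0<\epsilon<\epsilon_0$, whence $x\in\overline{\bigcap_i I_i}$. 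The case $S_{\mathrm{cw}}=\emptyset$ is symmetric, approaching through $x-\epsilon$, and if both classes are empty then $x$ is interior to every $I_i$ and lies in the intersection itself.

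The only genuinely delicate point is the sufficiency direction, where one must recognize that condition (2) is exactly the obstruction to a consistent choice of approach direction: it guarantees that all the endpoint constraints ``open the same way'', so that a one-sided limit into $\bigcap_i I_i$ exists. Finiteness of the family is used, and is needed, only to pass from the per-interval thresholds to a uniform $\epsilon_0$. Everything else is routine once the trichotomy and the partition into $S_{\mathrm{int}}$, $S_{\mathrm{cw}}$, $S_{\mathrm{ccw}}$ are in place.
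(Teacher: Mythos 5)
Your proof is correct and follows essentially the same route as the paper: closure monotonicity plus the two-sided neighborhood obstruction for necessity, and a case analysis on how $x$ sits in each $\overline{I_i}$ for sufficiency. The paper compresses the sufficiency direction into the phrase ``a simple case analysis,'' which your partition into $S_{\mathrm{int}}$, $S_{\mathrm{cw}}$, $S_{\mathrm{ccw}}$ and the uniform one-sided threshold $\epsilon_0$ simply makes explicit.
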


\begin{proof}
Obviously, if $x\in \overline{\bigcap_{i=1}^k I_i}$ then
$x\in\bigcap_{i=1}^k\overline {I_i}$.
Were $x$ the clockwise endpoint of $I_i$ and counterclockwise endpoint of $I_j$
then there would exist a neighborhood $E$ of $x$ such that
$E\cap I_i\cap I_j=\emptyset$ and so $x$ would not belong to $\overline{\bigcap_{i=1}^k I_i}$.

In the other direction, assume that $x$ has both the required properties. 
Then a simple
case analysis shows that for any neighborhood $E$ of $x$ we have $E\cap 
\bigcap_{i=1}^k I_i \neq \emptyset$ and so $x\in\overline{\bigcap_{i=1}^k I_i}$.
\end{proof}

\begin{lemma}\label{lemCircleCover}
Let $J=[x,y]$ be a nondegenerate interval on $\Te$.
Let $I_1,\dots,I_k$ be closed intervals such that $J\subset\bigcup_{i=1}^k
I_i$. Then there exists $i$ such that $x$ is not the counterclockwise endpoint
of $I_i$, that is $[x,x+\epsilon]\subset I_i$ for some $\epsilon>0$.
\end{lemma}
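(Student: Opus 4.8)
The plan is to argue by a pigeonhole argument exploiting that $J$ is nondegenerate, so that an entire small arc lying counterclockwise of $x$ must be covered, and then to isolate a single interval responsible for covering a sequence of points approaching $x$ from the counterclockwise side.

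First I would note that since $J=[x,y]$ is nondegenerate we have $|J|>0$, so every point of the form $x+t$ with $t\in(0,|J|]$ lies in $J$ and hence in $\bigcup_{i=1}^k I_i$. Choosing the sequence $t_n=|J|/n$, each point $x+t_n$ lies in some $I_{i_n}$. As there are only finitely many intervals, some index $i$ occurs infinitely often, yielding a subsequence of points $x+t_{n_j}\to x$ from the counterclockwise side, all contained in the single closed interval $I_i$.

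Next I would use closedness of $I_i$ to conclude $x\in I_i$, and then rule out $x$ being the counterclockwise endpoint. Writing $I_i=[a,b]$, if $x$ were equal to $b$ then every point $x+t$ with $t>0$ small would lie immediately past the counterclockwise endpoint, hence outside $I_i$, contradicting $x+t_{n_j}\in I_i$ for large $j$. Therefore $x\neq b$, so $x$ is not the counterclockwise endpoint of $I_i$; consequently $[x,x+\epsilon]\subset I_i$ for $\epsilon$ any positive number below the counterclockwise distance from $x$ to $b$, which is exactly the claimed conclusion.

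The argument is elementary, and the only real care needed is in the circle geometry: one must keep the orientation conventions straight (that $x$ being the \emph{counterclockwise} endpoint $b$ of $I_i=[a,b]$ is precisely the configuration that fails, whereas $x$ being the clockwise endpoint $a$ is fine) and handle the wraparound correctly so that ``$x+t$ for small $t>0$'' genuinely moves counterclockwise into $J$ rather than across the complementary gap of some $I_i$. I expect this bookkeeping to be the only obstacle; no deeper tool than the finiteness of the cover and the closedness of the intervals is required. One could equivalently phrase the whole thing as a direct contradiction: if no $I_i$ contained such an arc, then for each $i$ there would be an $\eta_i>0$ with $(x,x+\eta_i)\cap I_i=\emptyset$, and taking $\eta=\min_i\eta_i>0$ small enough that $(x,x+\eta)\subset J$ would exhibit a nonempty sub-arc of $J$ disjoint from the cover, a contradiction.
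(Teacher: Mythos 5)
Your proof is correct, and it is essentially the paper's argument: the ``direct contradiction'' reformulation in your final paragraph is precisely the paper's proof (write $I_i=[a_i,b_i]$; if $x\in[a_i,b_i)$ for some $i$ we are done, otherwise each $I_i$ misses a punctured arc $(x,x+\eta_i)$, and taking $\epsilon=\min_i\eta_i<|J|$ exhibits an uncovered sub-arc of $J$). Your headline pigeonhole/sequence argument is just a limiting variant of the same finiteness-plus-closedness idea, so there is nothing substantively different between the two.
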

\begin{proof}
Let $I_i=[a_i,b_i]$ for all $i$. 

If for some $i$, $x\in [a_i,b_i)$, we are done. We
have $x\not\in I_i$ or $x=b_i$ for each $i$. But then there exists $\epsilon$ such that
$|J|>\epsilon>0$ and
$(x,x+\epsilon)\cap I_i=\emptyset$ for all $i$, a contradiction.
\end{proof}

We are going to construct a candidate for the
graph $X=\{(\Phi(w),w):w\in\Omega\}$ of $\Phi_{|\Omega}$.
During the proof, the set $\Omega$ turns out to be a subshift.

Given $(x,w)\in\Te\times\Aomega$, we use the shorthand notation
$x_i=F^{-1}_{w_{[0,i)}}(x)$. For $a\in A$, label
$e^+_a$ the counterclockwise and $e^-_a$ the clockwise endpoint of the interval
$V_a$. 

Define $X\subset \Te\times\Aomega$ 
to be the set of all pairs $(x,w)$ such that: 
\begin {enumerate}
\item[(1)] For all $i=1,2,\dots,\, x_i \in \oV_{w_{i}}$.
\item[(2)] For no $i$ and $j$ is it true that $x_i=e^+_{w_i}$ and
$x_j=e^-_{w_{j}}$.
\end{enumerate}
\zoomedpicture{A situation contradicting part (2) of the definition of
$X$}{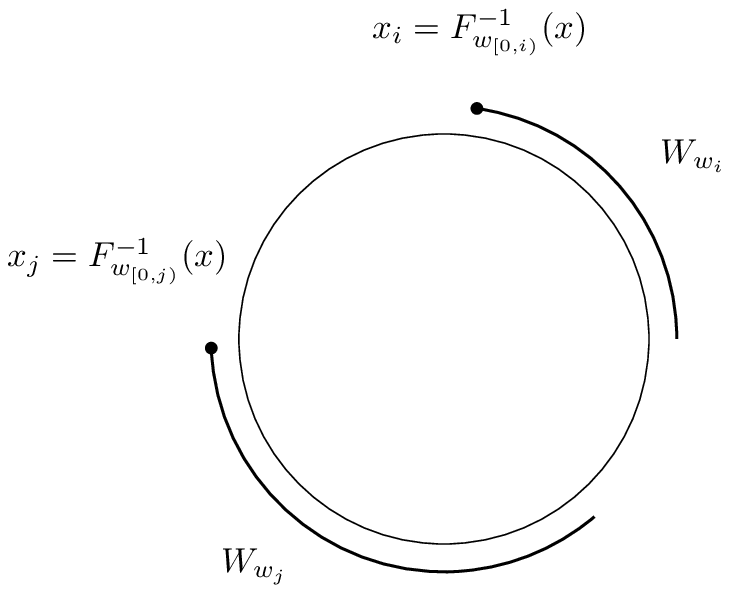}{figDesired}{4in}
Note that the second condition says that endpoints cannot ``alternate'':
If $x_i,x_j$ are endpoints of $V_{w_i},V_{w_j}$ then 
$x_i,x_j$ are both endpoints of the same type (clockwise or
counterclockwise). For an example of a forbidden situation, see Figure~\ref{figDesired}. 

Preparing for the future, we investigate the set $X$. 
Using Lemma~\ref{lemIntersection}, it is easy to see that $(x,w)\in X$ if and
only if $x\in\bigcap_{k=0}^\infty
\overline{\bigcap_{i=0}^k
F_{w_{[0,i)}}(V_{w_{i}})}.$ Remembering the definition of refined sets, we
let $W_{u}=\bigcap_{i=0}^{k-1} F_{u_{[0,i)}}(V_{u_{i}})$  for $u\in A^k$. 

We now have:
\[
\bigcap_{k=0}^\infty
\overline{\bigcap_{i=0}^k
F_{w_{[0,i)}}(V_{w_{i}})}=
\bigcap_{k=1}^\infty \overline{W_{w_{[0,k)}}}
\]

Note that there is in general a difference between $W_{w_{[0,k)}}$ and 
$V_{w_{[0,k)}}$. The former is defined as an intersection of preimages of
intervals $V_{w_i}$,
while the latter is the expanding interval of $F^{-1}_{w_{[0,k)}}$. These two sets
are different in general, but Lemma~\ref{lemExpansion} gives us the inclusion $W_{w_{[0,k)}}\subset V_{w_{[0,k)}}$.

Let $P(v)=\overline{W_{v}}$. We have $(x,w)\in X$ iff
$x\in\bigcap_{k=1}^\infty P(w_{[0,k)})$.

\begin{observation}
\label{obsNotRotation}
If $(x,w)\in X$, where $X$ is defined as above,
then $F_{w_{[k,k+1]}}$ is not a rotation for any $k$.
\end{observation}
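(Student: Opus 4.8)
The plan is to argue by contradiction: assume $F_{w_{[k,k+1]}}=F_{w_k}\circ F_{w_{k+1}}$ is a rotation for some $k$ and derive a violation of condition (2) in the definition of $X$. First I would record a free consequence of $(x,w)\in X$: the membership gives $x_i\in\oV_{w_i}$ for every $i\ge 0$ (the case $i=0$ being the first term $\oW_{w_0}=\oV_{w_0}$ of the refined-set reformulation $x\in\bigcap_{k\ge 1}\overline{W_{w_{[0,k)}}}$). In particular each $\oV_{w_i}$ is nonempty, so no $F_{w_i}$ is a rotation and every $V_{w_i}$ is a single open arc with well-defined distinct endpoints $e^+_{w_i}$ (counterclockwise) and $e^-_{w_i}$ (clockwise).

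The crux is a derivative identity. Since a rotation $R$ satisfies $\dot R\equiv 1$, the rotation hypothesis gives $\dot{(F^{-1}_{w_{[k,k+1]}})}\equiv 1$ on $\Te$. Writing $F^{-1}_{w_{[k,k+1]}}=F^{-1}_{w_{k+1}}\circ F^{-1}_{w_k}$ and applying the chain rule exactly as in Lemma~\ref{lemExpansion} yields
\[
\dot{(F^{-1}_{w_k})}(y)\cdot\dot{(F^{-1}_{w_{k+1}})}\!\left(F^{-1}_{w_k}(y)\right)=1\qquad\text{for every }y\in\Te.
\]
Because $V_a=\{\dot{(F^{-1}_a)}>1\}$ and $\oV_a=\{\dot{(F^{-1}_a)}\ge 1\}$ on $\Te$, this identity says precisely that $F^{-1}_{w_k}$ carries the open arc $V_{w_k}$ (where the first factor exceeds $1$) into the complement of $\oV_{w_{k+1}}$ (where the second factor drops below $1$), and sends the two endpoints of $V_{w_k}$ onto the two endpoints of $V_{w_{k+1}}$. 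As $V_{w_k}$ is a single open arc mapped homeomorphically, its image is exactly the complementary open arc $\Te\setminus\oV_{w_{k+1}}$.

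Now I bring in orientation. Since $F^{-1}_{w_k}$ is disc preserving it preserves the counterclockwise order on $\Te$, so writing $V_{w_k}=(e^-_{w_k},e^+_{w_k})$ and $\Te\setminus\oV_{w_{k+1}}=(e^+_{w_{k+1}},e^-_{w_{k+1}})$, matching initial endpoint to initial and terminal to terminal forces $e^-_{w_k}\mapsto e^+_{w_{k+1}}$ and $e^+_{w_k}\mapsto e^-_{w_{k+1}}$. Finally $x_k\in\oV_{w_k}$ with $x_{k+1}=F^{-1}_{w_k}(x_k)\in\oV_{w_{k+1}}$ cannot have $x_k$ interior to $V_{w_k}$ (its image would leave $\oV_{w_{k+1}}$), so $x_k\in\{e^+_{w_k},e^-_{w_k}\}$; by the endpoint correspondence the pair $(x_k,x_{k+1})$ is either $(e^+_{w_k},e^-_{w_{k+1}})$ or $(e^-_{w_k},e^+_{w_{k+1}})$. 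In both cases one of $x_k,x_{k+1}$ is a ``$+$'' endpoint and the other a ``$-$'' endpoint, which is exactly the alternation forbidden by condition (2), a contradiction.

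I expect the delicate step to be the orientation bookkeeping that pins down the alternation $e^\pm_{w_k}\mapsto e^\mp_{w_{k+1}}$; everything else is a direct consequence of the displayed identity. The point to state carefully is that the rotation hypothesis upgrades the generic inequality $\dot{(F^{-1}_{w_k})}(x_k)\cdot\dot{(F^{-1}_{w_{k+1}})}(x_{k+1})\ge 1$ (available from $x_k\in\oV_{w_k}$, $x_{k+1}\in\oV_{w_{k+1}}$) to an identity $\equiv 1$ on all of $\Te$, which is what forces the short arc $V_{w_k}$ onto the long complementary arc of $V_{w_{k+1}}$ and thereby produces the forbidden endpoint pattern.
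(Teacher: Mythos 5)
Your proof is correct, but it follows a genuinely different route from the paper's. The paper disposes of the observation in three lines by reusing machinery already in place: from $(x,w)\in X$ iff $x\in\bigcap_{k}P(w_{[0,k)})$ it gets $\overline{W_{w_{[0,k+1]}}}\neq\emptyset$, hence $W_{w_{[k,k+1]}}\neq\emptyset$ (via $W_{uv}=W_u\cap F_u(W_v)$), and then Lemma~\ref{lemExpansion} gives $\emptyset\neq W_{w_{[k,k+1]}}\subset V_{w_{[k,k+1]}}$, which is impossible for a rotation since a rotation has empty expansion interval. You instead argue by contradiction directly from the two defining conditions of $X$: the rotation hypothesis upgrades the chain-rule inequality to the identity $\dot{(F^{-1}_{w_k})}(y)\cdot\dot{(F^{-1}_{w_{k+1}})}\bigl(F^{-1}_{w_k}(y)\bigr)=1$ on all of $\Te$, which (together with bijectivity) forces $F^{-1}_{w_k}(V_{w_k})=\Te\setminus\oV_{w_{k+1}}$; orientation preservation then pins down the endpoint swap $e^{\pm}_{w_k}\mapsto e^{\mp}_{w_{k+1}}$, condition (1) forces $x_k$ to be an endpoint of $V_{w_k}$, and the swap produces exactly the alternation forbidden by condition (2). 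I verified the delicate steps: each $F_{w_i}$ is indeed a non-rotation (so the arcs and their endpoints are well defined), $\oV_a=\{z\in\Te:\dot{(F^{-1}_a)}(z)\geq 1\}$ with equality to $1$ exactly at the two endpoints, and the arc-matching argument is sound because disc preserving MTs map $(a,b)$ onto $(F(a),F(b))$. It is worth noting that both proofs genuinely need condition (2) — the statement is false under condition (1) alone (take $F_{w_{k+1}}=F_{w_k}^{-1}$ and $x_k=e^+_{w_k}$) — but the paper consumes it implicitly, through Lemma~\ref{lemIntersection} baked into the equivalence $(x,w)\in X\Leftrightarrow x\in\bigcap_k P(w_{[0,k)})$, whereas you consume it explicitly as the source of the contradiction. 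What the paper's route buys is brevity and uniformity with the surrounding development; what yours buys is an explicit geometric explanation of \emph{why} rotations are incompatible with membership in $X$, essentially the picture of Figure~\ref{figDesired} and the ``twist'' analysis inside Lemma~\ref{lemBrokenInterval} run in reverse.
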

\begin{proof}
From the condition $(x,w)\in X$ iff $x\in\bigcap_{k=1}^\infty P(w_{[0,k)})$ we
obtain that the set $P(w_{[0,k+1]})=\overline{W_{w_{[0,k+1]}}}$ is nonempty. Therefore,
$W_{w_{[k,k+1]}}\neq \emptyset$. By Lemma~\ref{lemExpansion},
$V_{w_{[k,k+1]}}\neq\emptyset$ and therefore $F_{w_{[k,k+1]}}$
cannot be a rotation.
\end{proof}

We are now ready to prove a special case of Theorem \ref{thmClosedCover1}.

\begin{theorem}\label{thmClosedCover2}
Let $\{F_a:a\in A\}$ be a Möbius iterative system. For $a\in A$ let $V_a$ be the expansive
interval of $F_a^{-1}$. Assume that $\Ve=\{V_a:a\in
A\}$ is an interval almost cover of $\Te$ and let $\Omega=\left(\Aomega\right)_{\Ve}$
be the corresponding interval shift. 
Then $\Omega$ is a Möbius number system for the iterative system $\{F_a:a\in A\}$.
\end{theorem}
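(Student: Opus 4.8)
To prove that $\Omega=(\Aomega)_\Ve$ is a M\"obius number system, I need to establish three properties: $\Omega\subset\ex_F$ (every word in $\Omega$ represents a point), $\Phi(\Omega)=\Te$ (surjectivity), and $\Phi_{|\Omega}$ is continuous. The geometric engine doing all the work is the set $X\subset\Te\times\Aomega$ constructed just above the statement, which is a candidate for the graph of $\Phi_{|\Omega}$. So the first thing I would do is relate $X$ and $\Omega$: show that $w\in\Omega$ precisely when there is (at least one, in fact exactly one) point $x$ with $(x,w)\in X$, and that this $x$ equals $\Phi(w)$.

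\emph{Representation and single-valuedness.} Fix $w\in\Omega$, so $W_{w_{[0,k)}}\neq\emptyset$ for all $k$, hence each $P(w_{[0,k)})=\overline{W_{w_{[0,k)}}}$ is a nonempty nested sequence of compact sets, giving a nonempty intersection; any $x$ in it satisfies $(x,w)\in X$. To see the intersection is a \emph{single} point I would invoke Lemma~\ref{lemBrokenInterval}: if $x\neq y$ both lie in $\bigcap_k P(w_{[0,k)})$, then their preimages $x_n,y_n$ both sit in $\oV_{w_n}$ for all $n$, so the lemma forces $F_{w_{[n,n+1]}}$ to be a rotation for all large $n$ --- which Observation~\ref{obsNotRotation} forbids. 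Hence $|\,\bigcap_k P(w_{[0,k)})\,|=1$. To identify this point as $\Phi(w)$, note that $W_{w_{[0,k)}}\subset V_{w_{[0,k)}}$ (Lemma~\ref{lemExpansion}), so the expansion intervals $V_{w_{[0,k)}}$ shrink toward $x$; by part (6) of Theorem~\ref{thmCharacteriseConvergence} (convergence $\oV_n\to\{x\}$) the sequence $\{F_{w_{[0,k)}}\}$ represents $x$, so $w\in\ex_F$ and $\Phi(w)=x$. This simultaneously shows $\Omega\subset\ex_F$.

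\emph{Surjectivity.} Given $x\in\Te$, I would build a word $w\in\Omega$ with $\Phi(w)=x$ greedily. Since $\Ve$ is an interval almost cover, $\bigcup_a\oV_a=\Te$, so by Lemma~\ref{lemCircleCover} I can always choose a letter $a$ so that $x$ sits in $\oV_a$ and in a way that avoids being the ``wrong'' (counterclockwise) endpoint; applying $F_a^{-1}$ and repeating produces points $x_i\in\oV_{w_i}$ satisfying conditions (1) and (2) defining $X$. The content of Lemma~\ref{lemCircleCover} is exactly that the greedy choice can be made to respect the endpoint-type constraint (2), so the constructed $(x,w)$ genuinely lands in $X$, giving $w\in\Omega$ and $\Phi(w)=x$.

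\emph{Continuity --- the main obstacle.} This is where I expect the real difficulty. Since $\Omega$ is compact (it is a closed subset of the compact $\Aomega$) and $\Te$ is Hausdorff, continuity of $\Phi_{|\Omega}$ is equivalent to its graph being closed. So I would show $X\cap(\Te\times\Omega)$ is closed: take $(x^{(m)},w^{(m)})\to(x,w)$ with each pair in the graph. Conditions (1)--(2) defining $X$ are ``closed'' conditions in the sense that $x_i\in\oV_{w_i}$ passes to limits (the $\oV_a$ are closed and there are finitely many letters, so $w^{(m)}\to w$ stabilizes each coordinate), and the non-alternation condition (2) likewise survives because it is a constraint on finitely-determined data at each pair of coordinates. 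Hence $(x,w)\in X$, and since $\Phi_{|\Omega}$ is single-valued we get $\Phi(w)=x$, i.e.\ $\Phi$ is continuous. The subtle point I would watch carefully is the endpoint condition (2): I must make sure that a limit of configurations each avoiding a clockwise/counterclockwise clash cannot itself produce such a clash, which is where the discreteness of the alphabet and the closedness of the $\oV_a$ are essential.
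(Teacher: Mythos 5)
Your overall architecture (relate $\Omega$ to the set $X$, prove single-valuedness via Lemma~\ref{lemBrokenInterval} and Observation~\ref{obsNotRotation}, build words greedily via Lemma~\ref{lemCircleCover}, deduce continuity from compactness) mirrors the paper's proof, and several steps are sound. But there is a genuine gap at the crucial point, the representation step. You argue: $x\in\bigcap_k P(w_{[0,k)})$, this intersection is a singleton, and since $W_{w_{[0,k)}}\subset V_{w_{[0,k)}}$ (Lemma~\ref{lemExpansion}) ``the expansion intervals $V_{w_{[0,k)}}$ shrink toward $x$,'' so part (6) of Theorem~\ref{thmCharacteriseConvergence} applies. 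That inclusion points the wrong way: it says the refined sets sit \emph{inside} the expansion intervals, so shrinkage of $\oW_{w_{[0,k)}}$ (which does follow from nested compact sets with singleton intersection) gives no control whatsoever on the size of $\oV_{w_{[0,k)}}$ --- the expansion intervals could stay large. Worse, the assertion $\oV_{w_{[0,k)}}\to\{x\}$ is, by part (6) of Theorem~\ref{thmCharacteriseConvergence}, literally equivalent to the statement you are trying to prove, so this step begs the question. The paper devotes the longest portion of its proof (its part (3)) exactly to this point: it splits into the case $\dot{(F^{-1}_{w_{[0,k)}})}(x)\to\infty$, handled by Corollary~\ref{corConvergencyTools}, and the genuinely hard case where the derivative product stays bounded. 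In the latter case the orbit points $x_k$ must cluster at endpoints of the intervals $\oV_{w_k}$; condition (2) in the definition of $X$ together with Lemma~\ref{lemBrokenInterval} forces $x_k=e^-_{w_k}$ exactly (up to symmetry) for all large $k$, and then a one-sided expansion argument --- the preimages of $[x,x+\epsilon]$ eventually have length at least $l=\min\{|\oV_a|:a\in A\}$ --- yields representation via part (3), not parts (2) or (6), of Theorem~\ref{thmCharacteriseConvergence}. None of this is recoverable from your outline, and since surjectivity also needs $\Phi(w)=x$ for the word you construct, the gap propagates to that step as well.

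A secondary, fixable flaw: in the continuity step you claim condition (2) of $X$ ``survives'' passage to limits because it is finitely determined. As stated this is false: condition (2) alone is not closed (one can have $x^{(m)}_i\to e^+_{w_i}$ with $x^{(m)}_i\neq e^+_{w_i}$ while $x^{(m)}_j\to e^-_{w_j}$, so each term is clash-free yet the limit clashes). What saves closedness of $X$ is the equivalence, via Lemma~\ref{lemIntersection}, of conditions (1) and (2) with the statement $x\in\bigcap_k P(w_{[0,k)})$; the latter is manifestly closed because prefixes of $w^{(m)}$ stabilize and each $P(v)$ is closed. If you route your closed-graph argument through that characterization --- which you already invoke in the single-valuedness step --- the continuity part becomes correct, and is then a legitimate, mildly different alternative to the paper's direct verification that $\Phi^{-1}(U)$ is open.
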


\begin{proof}
If $V_a=\emptyset$ for some $a\in A$ then the letter $a$ does not
appear in the shift $\Omega$ at all. Therefore, without loss of generality
$V_a\neq\emptyset$ for all $a\in A$.

Consider the set $X\subset \Te\times\Aomega$ as introduced above. Taking projections 
$\pi_1,\pi_2$ of $X$ to the first and second element, we obtain the set of 
points $\pi_1(X)\subset \Te$ and the set of words $\pi_2(X)\subset\Aomega$. 
To conclude our proof, we verify that:
\begin{enumerate}
\item $\pi_1(X)=\Te$
\item $\pi_2(X)=\Omega$
\item For $(x,w)\in X$ we have $\Phi(w)=x$.
\item $\Phi_{|\Omega}$ is continuous.
\end{enumerate}
\begin{enumerate}
\item This follows from the fact that $\{\oV_a:a\in A\}$ covers
$\Te$. Given $x\in\Te$ we can construct $w\in\Aomega$ satisfying $(x,w)\in
X$ by induction using Lemma~\ref{lemCircleCover}: Let $x_0=x$ and choose $w_0\in A$ such that $x_0\in \oV_{w_0}$ and $x_0$ is not
the counterclockwise endpoint of $\oV_{w_0}$. Then take $x_1=F^{-1}_{w_0}(x_0)$,
choose $w_1$ so that $x_1\in\oV_{w_1}$ and $x_1$ is not the counterclockwise
endpoint of $\oV_{w_1}$, let $x_2=F^{-1}_{w_2}(x_1)$ and repeat the procedure.
\item 
Notice that $W_v\neq \emptyset$ iff $P(v)\neq\emptyset$.  and
$P(w_{[0,k+1)})\subset P(w_{[0,k)})$. By compactness of $\Te$, 
$w\in\Omega$ if and only if there exists $x\in\bigcap_{k=1}^\infty
P(w_{[0,k)})$. But $x\in\bigcap_{k=1}^\infty P(w_{[0,k)})$ iff $(x,w)\in X$.
Therefore $w\in\Omega$ iff $w\in\pi_2(X)$.

\item Let $l=\min\{|\oV_a|: a\in A\}$ and assume $(x,w)\in X$. Recall the notation $x_i=F^{-1}_{w_{[0,i)}}(x)$. 
We will divide the proof into several cases.

If $\lim_{i\to\infty}\dot{(F^{-1}_{w_{[0,i)}})}(x)=\infty$ then we simply use 
Lemma~\ref{corConvergencyTools} and are done.

Therefore, assume
that $\lim_{i\to\infty}\dot{(F^{-1}_{w_{[0,i)}})}(x)$ is finite or does not
exist and examine the consequences.

We have
\[
\dot{(F^{-1}_{w_{[0,i)}})}(x)=\prod_{k=0}^{i-1}
\dot{(F^{-1}_{w_k})}(F^{-1}_{w_{[0,k)}}(x)) =\prod_{k=0}^{i-1}
\dot{(F^{-1}_{w_k})}(x_k).
\]
As $x_k\in \oV_{w_k}$, we have that $\dot{(F^{-1}_{w_k})}(x_k)\geq 1$ for each
$k$. From this we deduce that $\lim_{i\to\infty}\dot{(F^{-1}_{w_{[0,i)}})}(x)=
\prod_{k=0}^{\infty} \dot{(F^{-1}_{w_k})}(x_k)$ exists.

Recall the notation $\oV_a=[e^-,e^+]$. Examining the function
$\dot{(F^{-1}_a)}$ as in the proof of Lemma~\ref{lemMobiology}, we see
that for every $\xi$ such that $2l>\xi>0$ there exists a
$\delta>0$ such that for each $a\in A$ and each $y\in[e^-_a+\xi,e^+_a-\xi]$
we have $\dot{(F^{-1}_{a})}(y)>1+\delta$ (see Figure~\ref{figEndpoints}).
\zoomedpicture{The interval $[e^-_a+\xi,e^+_a-\xi]$ and
friends.}{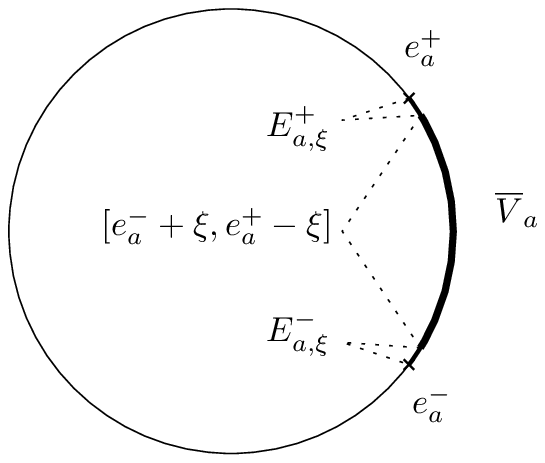}{figEndpoints}{3 in}
 
As $\lim_{k\to \infty}(1+\delta)^k=\infty$ for any $\delta>0$, the only way
that the product $\prod_{k=0}^{\infty} \dot{(F^{-1}_{w_k})}(x_k)$
can be finite is when for each $\xi$ such that $2l>\xi>0$ there exists $k_0$ such 
that
\[
k\geq k_0\Rightarrow
x_k\in[e^-_{w_k},e^-_{w_k}+\xi)\cup(e^+_{w_k}-\xi,e^+_{w_k}].
\]

Let 
$E^-_{a,\xi}=[e^-_a,e^-_a+\xi)$ and $E^+_{a,\xi}=(e^+_a-\xi,e^+_a]$. As the set 
$\{e^-_a,e^+_a:a\in A\}$ is finite, if we choose $\xi$ small enough then for any $a,b\in A$ and any choice of
$\triangle,\square$ in $\{+,-\}$, we obtain:
\[
F_{a}^{-1}(E^\triangle_{a,\xi})\cap E^{\square}_{b,\xi}\neq\emptyset\,\Rightarrow\,
F^{-1}_{a}(e^\triangle_{a})=e^{\square}_{b}.
\]
Moreover, if $\triangle$ and $\square$ are different then
\[
F_{a}^{-1}(E^\triangle_{a,\xi})\cap E^{\square}_{b,\xi}=\{e^\square_b\}.
\]

Take such a small $\xi$. If 
$\Phi(w_{[k_0,\infty)})=x_{k_0}$ then
$\Phi(w)=F_{w_{[0,k_0)}}(\Phi(w_{[k_0,\infty)})=x$. Therefore, we can without
loss of generality assume that $k_0=0$, i.e.:
\[
\forall k\geq 0,\,
x_k\in E^-_{{w_k},\xi}\cup E^+_{{w_k},\xi}
\]

Assume that $x_{k}\in E^+_{w_k,\xi}$ and $x_{k+1}\in E^-_{w_{k+1},\xi}$. By the choice
of $\xi$, this would only be possible when $x_k=e^+_{w_k}$ and
$x_{k+1}=e^-_{w_{k+1}}$, a contradiction with $(x,w)\in X$.

Therefore, without loss of generality, $x_k\in E^-_{w_k,\xi}$ for each $k\geq
0$. Let $e_k=F^{-1}_{[0,k)}(e^-_{w_0})$ and observe that in this case
$e_k=e^-_{w_k}$ for each $k\geq0$.

Now $x_k$ and $e_k$ both belong to $\oV_{w_k}$ for all
$k\geq 0$ and, using
Lemma~\ref{lemBrokenInterval} and Observation~\ref{obsNotRotation}, we obtain
$x_k=e_k=e^-_{w_k}$ for all $k\geq 0$. 

We will conclude the proof using part (3) of  Theorem~\ref{thmCharacteriseConvergence}. Let $J$ be an open interval containing
$x$. Then there exists $\epsilon>0$ such that $I=[x,x+\epsilon]\subset J$.
We want to show that 
\[
\liminf_{k\to\infty}|F^{-1}_{w_{[0,k)}}(I)|\geq l,
\]
where
again $l=\min\{|\oV_a|: a\in A\}$. Observe that each  $F^{-1}_{w_{k}}$ expands the
interval $[x_k,x_k+l]$, therefore it is enough to find a
single $k$ such that $|F^{-1}_{w_{[0,k)}}(I)|\geq l$.

But this again follows from Lemma~\ref{lemBrokenInterval} and
Observation~\ref{obsNotRotation}; all we have to do is 
choose the point $y$ equal to $x+\epsilon$. Obviously, $y\neq x$, so there must
exist $k$ such that $F^{-1}_{w_{[0,k)}}(y)\not \in \oV_{w_k}$ which is only
possible when $|F^{-1}_{w_{[0,k)}}(I)|>l$.

Therefore, by Theorem~\ref{thmCharacteriseConvergence}, we have
$\Phi(w)=x$, concluding the proof.

\item 
We begin by proving that $\bigcap_{i=1}^\infty P(w_{[0,i)})$ is a
singleton for every $w\in \Omega$. We know that when $(x,w)\in X$ then $x\in
P(w)$. Let $y\in P(w)$. Then for all
$k$ we have $F^{-1}_{w_{[0,k)}}(\{x,y\})\in \oV_{w_k}$ and, applying Lemma
\ref{lemBrokenInterval} together with Observation~\ref{obsNotRotation}, we obtain $x=y$,
which is what we need.

Now given an open set $U\subset \Te$ we want to show that $\Phi^{-1}(U)$ is open in
$\Omega$. Let $w\in \Phi^{-1}(U)$ and assume that for all $k$ there exists
$w'\in\Omega$
such that $w_{[0,k]}=w'_{[0,k]}$ and $\Phi(w')\not \in U$. Then we must have
$\bigcap_{i=1}^{k}P(w_{[0,i]})\cap U^c\neq \emptyset$ and from compactness of $X$
we obtain that $\cap_{i=1}^{\infty}P(w_{[0,i]})\cap U^c\neq \emptyset$. But
then $\cap_{i=1}^{\infty}P(w_{[0,i]})$ cannot be a singleton as $\Phi(w)\not\in
U^c$, a contradiction.\qedhere
\end{enumerate}
\end{proof}
Note that the theorem is also true for a slightly larger subshift than
$\Omega$. For details, see \cite{Kazda}, Theorem 21.

Finally, we have prepared the groundwork for the proof of Theorem~\ref{thmClosedCover1}.
\begin{thmCl}
Let $\{F_a:a\in A\}$ be a Möbius iterative system. Assume that $\We$ is such an
interval almost cover that $W_a\subset V_a$ for all $a\in A$ and $\Sigma$ is a subshift
compatible with $\We$. Then
$\Sigma_\We$ is a Möbius number system for the iterative system $\{F_a:a\in A\}$.

Moreover, for every $v\in \Astar$, $\Phi([v]\cap\Sigma_\We)=\oW_v$.
\end{thmCl}

\begin{proof}
As $W_a\subset V_a$ for each $a\in A$ and $\Sigma\subset\Aomega$, 
we have $\Sigma_\We\subset \Omega$ where $\Omega$ is the subshift from
Theorem~\ref{thmClosedCover2}. Therefore, using
Theorem~\ref{thmClosedCover2}, we obtain that $\Phi$ is defined and continuous
on $\Sigma_\We$.

It remains to prove that $\oW_v=\Phi([v]\cap\Sigma_\We)$ for each $v\in\Astar$
(this will also show
the surjectivity of $\Phi_{|\Sigma_\We}$, as $\oW_\lambda=\Te$). As usual, we prove two
inclusions.

First, $\oW_v\subset\Phi([v]\cap\Sigma_\We)$.
Let $x\in \oW_v$ be any point. We want to find an infinite word $w$ such that
$vw\in\Sigma$ and we have
$x \in \oW_{v\cdot w_{[0,k)}}$ for all $k$. This will be enough, as for such a
$w$ we will have
$vw\in\Sigma_{\We}$ as well as $(x,vw)\in X$. Then by the proof of
Theorem~\ref{thmClosedCover2}, $x=\Phi(vw)$ and so $x\in\Phi([v]\cap\Sigma_\We)$.

Without loss of generality assume that there exists $\epsilon_0>0$ such that
$[x,x+\epsilon_0]\subset \oW_v$ (if this is not true then there exists
$\epsilon_0>0$ such that $[x-\epsilon_0,x]\subset\oW_v$ and the proof is
similar). 

We now inductively construct $w$: Assume that $[x,x+\epsilon_k]\subset \oW_{v\cdot w_{[0,k)}}$ for $\epsilon_k>0$ (this
is true for $k=0$). Then from the compatibility condition, we obtain that 
\[
\oW_{v\cdot w_{[0,k)}}= \ds\bigcup_{v\cdot w_{[0,k)}a\in\el(\Sigma)}\oW_{v\cdot
w_{[0,k)}a}.
\]

Applying Lemma~\ref{lemCircleCover} with $J=[x,x+\epsilon_k]$, we
find $w_k=a$ and $\epsilon_{k+1}>0$ such that
$v\cdot w_{[0,k)}w_k\in\el(\Sigma)$ and $[x,x+\epsilon_{k+1}]\subset W_{v\cdot
w_{[0,k)}w_k}$,
completing the induction step.

It remains to show that $\oW_v\supset \Phi([v]\cap \Sigma_\We)$.
Let $\Phi(vw)=x$ for $vw\in\Sigma_\We$. We claim that then $x\in\oW_v$.

By Theorem~\ref{thmCharacteriseConvergence}, we have that
$\oV_{v\cdot w_{[0,k)}}\to\{x\}$ as $k$ tends to $\infty$. 
From the inclusion $\oW_{v\cdot w_{[0,k)}}\subset \oV_{v\cdot w_{[0,k)}}$, we obtain
$\oW_{v\cdot w_{[0,k)}}\to \{x\}$. To complete the proof, notice that for all $k$ we have
$\oW_{v\cdot w_{[0,k)}}\subset\oW_v$, which is only possible when $x\in \oW_v$.
\end{proof}

\begin{corollary}\label{corClosedCover2}
Let $\{F_a:a\in A\}$ be a Möbius iterative system and $B\subset \Bplus$ a finite
set of words. Assume that $\We$ is such
an interval almost cover and $\Sigma$ such a subshift compatible with $\We$ that
$W_b\subset V_b$ for every $b\in B$ and each $w\in\Sigma_\We$ contains as a
prefix some $b\in B$.

Then $\Sigma_\We$ is a Möbius number system for the iterative system $\{F_a:a\in A\}$.
Moreover, $\Phi([v]\cap \Sigma_\We)=\oW_v$ for all $v\in\Astar$.
\end{corollary}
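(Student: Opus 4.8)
The plan is to recode the whole system over the finite set of blocks $B$ and to apply Theorem~\ref{thmClosedCover2}/Theorem~\ref{thmClosedCover1} there. I regard $B$ as a new alphabet, assign to each (nonempty) word $b\in B$ the transformation $F_b=F_{b_0}\circ\cdots\circ F_{b_{|b|-1}}$, and take the family $\We'=\{W_b:b\in B\}$ of refined sets. Because composition of transformations matches concatenation of words, the refined sets of this \emph{block system} along a sequence $u=b^{(0)}b^{(1)}\cdots\in B^\omega$ coincide with the refined sets of the original system along the concatenation $\kappa(u)=b^{(0)}b^{(1)}\cdots\in\Aomega$; concretely $W_{w_{[0,n_k)}}$ is the block refined set of $u_{[0,k)}$, where $w=\kappa(u)$ and $n_k=|b^{(0)}|+\cdots+|b^{(k-1)}|$. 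The hypothesis $W_b\subset V_b$ is exactly the condition needed to apply Theorem~\ref{thmClosedCover1} to the block system with the full shift $B^\omega$, which is compatible with every interval almost cover.

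First I would verify that $\We'$ is an interval almost cover, i.e. $\bigcup_{b\in B}\oW_b=\Te$. Given $x\in\Te$, the original cover supplies $a_0$ with $x\in\oW_{a_0}$, and repeated use of the compatibility of $\Sigma$ with $\We$ together with Lemma~\ref{lemCircleCover} produces $w\in\Sigma$ with $x\in\oW_{w_{[0,k)}}$, hence $W_{w_{[0,k)}}\neq\emptyset$, for all $k$; thus $w\in\Sigma_\We$, and by the prefix hypothesis $w$ has a prefix $b\in B$, so $x\in\oW_{w_{[0,|b|)}}=\oW_b$. The same hypothesis, applied to each shift $\sigma^n(w)$ (which again lies in the subshift $\Sigma_\We$), lets me decompose an arbitrary $w\in\Sigma_\We$ greedily as $w=\kappa(u)$ for some $u\in B^\omega$; since every prefix $W_{w_{[0,n)}}$ is nonempty, $u$ lies in the block interval shift $(B^\omega)_{\We'}$. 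Theorem~\ref{thmClosedCover1} then gives, for the block system, that $\Phi'$ is defined and continuous on $(B^\omega)_{\We'}$, that $\Phi'([b]\cap(B^\omega)_{\We'})=\oW_b$, and (from its reverse-inclusion step, via Lemma~\ref{lemExpansion} and part (6) of Theorem~\ref{thmCharacteriseConvergence}) that the closed refined sets $\oW_{w_{[0,n_k)}}$ shrink in the Hausdorff metric to the single point $\Phi'(u)$.

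The main obstacle is that $\Phi'(u)$ is a priori represented only by the subsequence $\{F_{w_{[0,n_k)}}\}_k$ taken at block boundaries, whereas $\Phi(w)$ requires the full sequence $\{F_{w_{[0,n)}}\}_n$. I would bridge this gap using the finiteness of $B$: for $n_k\le n\le n_{k+1}$ the intermediate map $F_{w_{[n_k,n)}}$ is indexed by a proper prefix of a block, and there are only finitely many such prefixes, so for a fixed $z\in\Int(\De)$ all points $F_{w_{[n_k,n)}}(z)$ lie in one compact set $K\subset\Int(\De)$. Part (7) of Theorem~\ref{thmCharacteriseConvergence}, applied to the convergent subsequence, gives $F_{w_{[0,n_k)}}(K)\to\{\Phi'(u)\}$; writing $F_{w_{[0,n)}}(z)=F_{w_{[0,n_k)}}\bigl(F_{w_{[n_k,n)}}(z)\bigr)$ then shows $F_{w_{[0,n)}}(z)\to\Phi'(u)$ along the full sequence, so by part (9) of the same theorem $\Phi(w)=\Phi'(u)$. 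Consequently $\Phi$ is defined on $\Sigma_\We$, and since $\oW_{w_{[0,n_k)}}\to\{\Phi(w)\}$ the nested nonempty compact sets $\oW_{w_{[0,n)}}$ satisfy $\bigcap_n\oW_{w_{[0,n)}}=\{\Phi(w)\}$.

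Finally I would assemble the stated conclusions. Continuity of $\Phi$ on $\Sigma_\We$ follows from this shrinking: choosing $N$ with $\oW_{w_{[0,N)}}\subset B_\epsilon(\Phi(w))$ forces $\Phi(w')\in\oW_{w'_{[0,N)}}=\oW_{w_{[0,N)}}$ for every $w'\in\Sigma_\We$ agreeing with $w$ on the first $N$ symbols. For the identity $\Phi([v]\cap\Sigma_\We)=\oW_v$ the reverse inclusion is immediate from $\Phi(vw)\in\bigcap_n\oW_{(vw)_{[0,n)}}\subset\oW_v$, while $\oW_v\subset\Phi([v]\cap\Sigma_\We)$ is obtained exactly as in the proof of Theorem~\ref{thmClosedCover1}: from $x\in\oW_v$ with, say, $[x,x+\epsilon_0]\subset\oW_v$, the compatibility condition and Lemma~\ref{lemCircleCover} build $vw\in\Sigma_\We$ with $x\in\oW_{(vw)_{[0,k)}}$ for all $k$, whence $x=\Phi(vw)$ by the singleton property just established. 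Taking $v=\lambda$ yields surjectivity $\Phi(\Sigma_\We)=\oW_\lambda=\Te$, completing the proof that $\Sigma_\We$ is a M\"obius number system.
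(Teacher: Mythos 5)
Your proof is correct, and its skeleton is the paper's own: recode over the block alphabet $B$, apply Theorem~\ref{thmClosedCover1} to the block system, and close the gap between convergence along block boundaries and convergence of the full sequence $\{F_{w_{[0,n)}}\}_{n=1}^\infty$ by exactly the paper's device (the finitely many images $F_v(z)$, $v$ ranging over proper prefixes of blocks, form a compact subset of $\Int(\De)$, after which parts (7) and (9) of Theorem~\ref{thmCharacteriseConvergence} finish the job). Where you genuinely diverge is the choice of block-level subshift. The paper pulls $\Sigma$ back to $\Theta=\psi^{-1}(\Sigma)\subset\Bomega$ and must then prove that $\{W_b:b\in B\}$ is an interval almost cover compatible with $\Theta$ --- an induction using the compatibility of $\We$ with $\Sigma$ and the maximal block length --- after which the identity $\Phi([v]\cap\Sigma_\We)=\oW_v$ is transferred back from the block level. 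You instead take the full shift $\Bomega$, whose compatibility with every interval almost cover is automatic (as the paper itself notes), and connect it to $\Sigma_\We$ by two one-way steps: a greedy block decomposition sending each $w\in\Sigma_\We$ into $(\Bomega)_{\We'}$, which suffices to define $\Phi$ on $\Sigma_\We$ and to establish the singleton property $\bigcap_n\oW_{w_{[0,n)}}=\{\Phi(w)\}$; and a re-run of the word-building argument of Theorem~\ref{thmClosedCover1} (compatibility plus Lemma~\ref{lemCircleCover}) inside the original alphabet, which yields $\oW_v\subset\Phi([v]\cap\Sigma_\We)$ and surjectivity. This reorganization is sound: you never need the inclusion $\kappa((\Bomega)_{\We'})\subset\Sigma_\We$, which would in fact be false in general since $\Sigma$ may forbid some concatenations of blocks, and your explicit continuity argument via shrinking refined sets is cleaner than what the paper leaves implicit. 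The trade-off is that you exchange the paper's compatibility induction over $\Theta$ for repeating the cover and word-building arguments at the letter level, so the two proofs do comparable work, distributed differently.
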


\begin{proof}
We will take the set $B$ as our new alphabet, solve the problem in $\Bomega$
and then return back to $\Aomega$. Denote $\psi:\Bstar\to\Astar$ the 
map that ``breaks down'' each $b\in B$ into its letters. We can extend $\psi$
in an obvious way to obtain a map $\Bomega\to\Aomega$. Denote this map also by
$\psi$.

Because every $w\in\Aomega$ has a prefix in $B$, the map
$\psi:\Bomega\to\Aomega$ is surjective. A little thought gives us that
$\psi$ is continuous in the product topology on $\Bomega$ and $\Aomega$. 

Let $\Theta=\psi^{-1}(\Sigma)$. We claim that $\Theta$ is a subshift of $\Bomega$.
 As $\psi$ is continuous, $\Theta$ must be closed and
from the shift-invariance of $\Sigma$ easily follows shift-invariance of
$\Theta$. 

Consider the Möbius iterative system $\{F_b:b\in B\}$ and let
$\We^{B}=\{W_b:b\in B\}$. We claim that $\We^B$ is an interval 
almost cover  compatible with the subshift
$\Theta$.

Before we proceed, notice that we have now two meanings for the refined set $W_v$
depending on whether $v\in\Astar$ or $v\in\Bstar$. However, a quick proof by
induction yields that for any $v\in\Bstar$
we have $W_v=W_{\psi(v)}$. This is why we will identify $v$ and
$\psi(v)$ when talking about refined sets.

We show that for every $v\in \Bstar$ we have
\[
\oW_v=\bigcup_{b\in B,\,vb\in\el(\Theta)}\oW_{vb}.
\]
Not only does this prove compatibility, it also shows that $\We^B$ is an
interval almost cover (because $\oW_\lambda=\Te$).

Let $v\in\Bstar$. Let $n=\max\{|b|:b\in B\}$ and choose any $x\in\oW_v$. From the compatibility
of $\We$ and $\Sigma$ we obtain by induction that there exists
a word $w$ of length $n$ such that $x\in \oW_{vw}$ and $vw\in\el(\Sigma)$. But then there
exists a $b\in B$ that is a prefix of $w$. Therefore $\oW_{vb}\subset\oW_{vw}$
and so $x\in\oW_{vb}$ and $vb\in\el(\Theta)$.  We have shown that 
\[
\oW_v=\bigcup_{b\in B,vb\in\el(\Theta)}\oW_{vb}.
\]

As $W_b\subset V_b$ for each $b\in B$, Theorem~\ref{thmClosedCover1} now yields
that $\Theta_{\We^B}$ is a Möbius number system for $\{F_b:b\in B\}$ and that
for every $v\in \Bstar$, $\Phi([v]\cap\Theta_{\We^B})=\oW_v$.

As $\psi(\Theta)=\Sigma$ and for each $v\in\Bplus$, $W_v=\emptyset$ iff 
$W_{\psi(v)}=\emptyset$, we obtain that $\psi(\Theta_{\We^{B}})=\Sigma_\We$.

To prove that $\Sigma_\We$ is a Möbius number system, we still need to 
verify that if $w=\psi(u)$ for $u\in \Theta_{\We^B}$ then the sequence
$\{F_{w_{[0,n)}}\}_{n=1}^\infty$ actually represents $\Phi(u)$. 
Fortunately, this is not difficult: Let 
\[
E=\{F_v(0):\;\hbox{$v$ is a prefix of some $b\in B$}\}.
\]
The set $E$ is  finite (and therefore compact) and lies inside the unit circle. By
Theorem~\ref{thmCharacteriseConvergence}, $F_{u_{[0,k)}}(E)\to \{x\}$ for
$k\to\infty$. Observe that for every $n$ there exist $k$ and $v$ such that
$w_{[0,n)}=\psi(u_{[0,k)})v$ and $v$ is a prefix of some $b\in B$. Therefore, 
$F_{w_{[0,n)}}(0)\in F_{u_{[0,k)}}(E)$ and so $F_{w_{[0,n)}}(0)\to x$. It
follows that the sequence $\{F_{w_{[0,n)}}\}_{n=1}^\infty$ represents $\Phi(u)$.

It remains to show that $\Phi([v]\cap\Sigma_\We)=\oW_v$ for every $v\in\Astar$.
Observe that we already have this result for $v=\psi(u)$ where $u\in\Bstar$.
Again, we prove two inclusions.

To prove $\oW_v\subset \Phi([v]\cap \Sigma_\We)$, consider any $x\in \oW_v$.
Then we can find $z\in\el(\Theta)$ such that $v$ is a prefix of $\phi(z)$ and
$x\in\oW_z$. Therefore:
\[
x\in\Phi([z]\cap \Theta_{\We^B})\subset\Phi([v]\cap \Sigma_\We).
\]

Let $w=\psi(u)$ with $u\in\Theta_{\We^B}$. Observe that $\Phi(w)=x$ iff
$\bigcap_{k=1}^\infty \oW_{\psi(u_{[0,k)})}=\{x\}$. Because the sets
$\oW_{w_{[0,k)}}$ form a chain, we can actually rewrite this condition as
$\bigcap_{n=1}^\infty \oW_{w_{[0,n)}}=\{x\}$. Therefore, $\Phi(w)\in
\oW_{w_{[0,n)}}$ for all $n$. Letting $w_{[0,n)}=v$, we have $\Phi([v]\cap
\Sigma_\We)\subset\oW_v$.
\end{proof}

As the set $B=\{v:v\in\el(\Sigma_\We),|v|=n\}$ contains a prefix
of every $w\in\Sigma_\We$, we obtain an improvement of parts (1)--(3) of 
Theorem 10 in \cite{Kurka3}.

\begin{corollary}\label{corClosedCover1}
Let $\{F_a:a\in A\}$ be a Möbius iterative system. Assume that $\We$ is such
an interval almost cover and $\Sigma$ such a subshift compatible with $\We$
that either
$Q_n(\We,\Sigma)>1$ or $Q_n(\We,\Sigma)= 1$ and 
no $F_v,\,v\in \el(\Sigma_\We)\cap A^n$ is a rotation. Then
$\Sigma_\We$ is a Möbius number system for the iterative system $\{F_a:a\in A\}$.

Moreover, $\Phi([v]\cap \Sigma_\We)=\oW_v$ for all $v\in\Astar$.
\end{corollary}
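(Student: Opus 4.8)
The plan is to deduce this corollary from Corollary~\ref{corClosedCover2} by choosing the right finite set of words. Set $B=\el(\Sigma_\We)\cap A^n$. Since $A$ is finite, $B$ is a finite subset of $\Aplus$, and every $w\in\Sigma_\We$ has its length-$n$ prefix $w_{[0,n)}$ in $B$: indeed $w_{[0,n)}$ is a factor of $w\in\Sigma_\We$, hence an element of $\el(\Sigma_\We)$ of length $n$. Thus the prefix hypothesis of Corollary~\ref{corClosedCover2} holds automatically, and the only thing left to verify is that $W_b\subset V_b$ for each $b\in B$. Once this is done, Corollary~\ref{corClosedCover2} delivers both conclusions at once: $\Sigma_\We$ is a M�bius number system and $\Phi([v]\cap\Sigma_\We)=\oW_v$ for all $v\in\Astar$.

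To verify $W_b\subset V_b$ I would first recall from Lemma~\ref{lemMobiology} and the definition of $V_b$ that, for a non-rotation $F_b$, the map $x\mapsto\dot{(F^{-1}_b)}(x)$ satisfies $\dot{(F^{-1}_b)}(x)\geq 1$ exactly on the closed arc $\oV_b$, with equality precisely at its two endpoints $e^-_b,e^+_b$, so that $V_b=\{x\in\Te:\dot{(F^{-1}_b)}(x)>1\}$ is the corresponding open arc. For each $b\in B$ we have $|b|=n$ and $b\in\el(\Sigma_\We)$, so $q(b)\geq Q_n(\We,\Sigma)$, where $q(b)=\min\{\dot{(F^{-1}_b)}(x):x\in\oW_b\}$.

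In the first case $Q_n(\We,\Sigma)>1$ this is immediate: $\dot{(F^{-1}_b)}(x)\geq q(b)>1$ for every $x\in\oW_b$, and in particular for every $x\in W_b\subset\oW_b$, so $W_b\subset V_b$. In the second case $Q_n(\We,\Sigma)=1$ with no $F_b$, $b\in B$, a rotation, I only get $\dot{(F^{-1}_b)}(x)\geq 1$ on $\oW_b$, i.e. $\oW_b\subset\oV_b$. Upgrading this to the strict inclusion $W_b\subset V_b$ is the only genuinely delicate point of the proof: I must rule out that $W_b$ touches an endpoint of the arc $\oV_b$. Here I would use that $W_b$ is a union of open intervals, so if it contained an endpoint $e^\pm_b$ it would contain a whole neighborhood of that endpoint, which necessarily sticks out of the closed arc $\oV_b$, contradicting $\oW_b\subset\oV_b$. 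Hence $W_b$ avoids both endpoints, and since $\oV_b\setminus\{e^-_b,e^+_b\}=V_b$ we conclude $W_b\subset V_b$. It is exactly in this case that the non-rotation hypothesis is needed: it guarantees that $\oV_b$ is a proper arc (rather than all of $\Te$, as it would be for a rotation), so that its endpoints are well defined and the openness argument applies.

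With $W_b\subset V_b$ established for all $b\in B$ in both cases, and the prefix condition free of charge, applying Corollary~\ref{corClosedCover2} to the set $B$ finishes the proof. The bulk of the work is the case analysis verifying $W_b\subset V_b$; everything else is bookkeeping around the definitions of $B$, $\el(\Sigma_\We)$, and $Q_n(\We,\Sigma)$.
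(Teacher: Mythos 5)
Your proof is correct and takes essentially the same route as the paper: choose $B=\el(\Sigma_\We)\cap A^n$, verify $W_b\subset V_b$ in the two cases, and invoke Corollary~\ref{corClosedCover2}; your endpoint/openness argument in the $Q_n(\We,\Sigma)=1$ case is exactly the paper's terse remark that the inequality $\dot{(F^{-1}_b)}(x)>1$ is sharp on $W_b$, spelled out in detail. One immaterial slip in an aside: under the paper's definitions a rotation has $V_b=\emptyset$ (hence $\oV_b=\emptyset$), not $\oV_b=\Te$, but since rotations are excluded by hypothesis this changes nothing.
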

\begin{proof}
Choose $B=\{v:v\in\el(\Sigma_\We),|v|=n\}$. We need to verify that
$W_b\subset V_b$ for each $b\in B$. If $Q_n(\We,\Sigma)>1$ then even $\oW_b\subset V_b$,
while if $Q_n(\We,\Sigma)=1$ and no $F_b$ is a rotation
then for all $x\in W_b$ we have
$\dot{(F^{-1}_n)}(x)>1$ (the inequality is sharp), therefore $W_b\subset V_b$
and we can apply Corollary~\ref{corClosedCover2}.  
\end{proof}

\subsection{Examples revisited}\label{secExamples2} 

We now return to the three number systems presented at the end of
Section~\ref{secBasic} and prove that they indeed are Möbius number systems.
The main practical advantage of using Theorem~\ref{thmClosedCover1}
and Corollary~\ref{corClosedCover2} is that they 
turn verifying convergence, continuity and surjectivity of
$\Phi_{|\Sigma}$ into a set of combinatorial problems (finding $\We$ and
$\Sigma$, describing $\Sigma_\We$ and finding $B$ so that $W_b\subset V_b$).

First, let us revisit {\bf Example~\ref{exp3parabolic}}. We have the three parabolic 
transformations $F_a,F_b$ and $F_c$. Observe that
$V_a=(A,B)$, $V_b=(B,C)$ and $V_c=(C,A)$. What is more, the interval shift
$\Omega=(\Aomega)_{\Ve}$ defined using the interval almost cover $\Ve=\{V_a,V_b,V_c\}$ is precisely
the shift $\Sigma$ obtained by forbidding the words $ac,ba,cb$. One way to show
this is to first show that $W_{ac}=W_{ba}=W_{cb}=\emptyset$ and then verify
that whenever $u\in A^n$ does not contain any forbidden factor then
$W_u= F_{u_{[0,n-1)}}(W_{w_{n-1}})$.

We can prove the last equality by induction on $n$: For $n=1$ the claim is
trivial, while for $n=2$ we can examine all the (finitely many) cases. 
Assume that the claim is true for some $n$ and let $u\in A^{n+1}$. Then:
\begin{eqnarray*}
W_u&=&W_{u_{[0,n)}}\cap F_{u_{[0,n)}}(W_{u_n})=
F_{u_{[0,n-1)}}(W_{u_{n-1}})\cap F_{u_{[0,n)}}(W_{u_n})\\
&=&F_{u_{[0,n-1)}}(W_{u_{n-1}}\cap F_{u_{n-1}}(W_{u_n}))=
F_{u_{[0,n-1)}}(W_{u_{[n-1,n]}}).
\end{eqnarray*}
Now $W_{u_{[n-1,n]}}=F_{u_{n-1}}(W_{u_n})$
by the induction hypothesis for $n=2$ and so:
\[
W_u=F_{u_{[0,n-1)}}(F_{u_{n-1}}(W_{u_n}))=F_{u_{[0,n)}}(W_{u_n}).
\]
Having obtained $\Sigma=\Omega$, Theorem~\ref{thmClosedCover2} gives us
 that $\Sigma$ is a Möbius number system.

In the case of the continued fraction system from {\bf
Example~\ref{expRCF}}, 
let 
\[
W_{\one}=(i,-1),W_0=(-1,1)\; \hbox{and}\;W_1=(1,i).
\]
 It is
straightforward to see that then $(\Aomega)_\We$ is precisely the subshift
$\Sigma$ defined
by forbidding $00,1\one,\one1,101,10\one$. It remains to choose the set
$B=\{01,0\one,1,\one\}$ (which contains a prefix of every word
$w\in\Sigma$) and verify that $W_b\subset V_b$ for each $b\in B$.

By Corollary~\ref{corClosedCover2},
we conclude that $\Sigma_\We$ is a Möbius number system.

Finally, we analyze the \emph{signed binary system} from {\bf
Example~\ref{expbinary}}. Proving that this system is indeed a
Möbius number system requires a reasonable amount of computation which we have
decided to skip here and present only the main points of the proof.
 
Define the shift $\Sigma_0$ by
forbidding the words $02,20,12$ and $\one2$. Then let $v:\Er\to\Te$ denote
the inverse of the stereographic projection and consider the interval almost cover
$\We$:
\[
W_{\one}=(-1,q^{-}),W_0=(h^{-},h^{+}),W_1=(q^{+},1),W_2=(h^{-},h^{+}),
\]
where
\begin{eqnarray*}
q^{-}&=&v(-1/4)=\frac{-8-15i}{17}\\
q^{+}&=&v(1/4)=\frac{8-15i}{17}\\
h^{-}&=&v(-1/2)=\frac{-4-3i}5\\
h^{+}&=&v(1/2)=\frac{4-3i}5.
\end{eqnarray*}
Next, we should show that $\We$ is compatible with $\Sigma_0$ and that 
$(\Sigma_0)_\We=\Sigma$ is the subshift defined by the forbidden words
$20,02,12,\one2,1\one,\one1$. This follows from the set of identities:
\begin{eqnarray*}
F_2(\oW_2)\cup F_2(\oW_1)\cup F_2(\oW_{\one})&=& \oW_2\\
F_0(\oW_0)\cup F_0(\oW_1)\cup F_0(\oW_{\one})&=& \oW_0\\
F_1(\oW_0)\cup F_1(\oW_1)&=&\oW_1\\
F_{\one}(\oW_0)\cup F_{\one}(\oW_{\one})&=&\oW_{\one}
\end{eqnarray*}
together with
\[
F_1(W_{\one})\cap W_1=F_{\one}(W_1)\cap W_{\one}=\emptyset.
\]
It remains to take $B=\{0,1,\one,21,2\one,22\}$ and check the requirements of
Corollary~\ref{corClosedCover2}. It is easy to see that each
$w\in\Sigma$ contains as a prefix a member of $B$. Finally, a detailed
calculation (which we omit here) will verify that for every $b\in B$ 
we have $W_b\subset V_b$, therefore the signed binary system is a Möbius number
system by Corollary~\ref{corClosedCover2}.

\subsection{The numbers $Q_n(\We,\Sigma), Q(\We,\Sigma)$ and $Q(\Sigma)$}\label{secQ}

In the previous subsection, we have shown that if $Q_n(\We,\Sigma)> 1$ then the interval shift
$\Sigma_\We$ is a Möbius number system. We offer a partial converse to this
statement.

\begin{definition}
For an interval almost cover $\We$, let
$Q(\We,\Sigma)=\lim_{n\to\infty}\sqrt[n]{Q_n(\We,\Sigma)}$.
\end{definition}

\begin{remark}
Observe that $Q_{n+m}(\We,\Sigma)\geq Q_n(\We,\Sigma)\cdot Q_m(\We,\Sigma)$. 
Therefore 
\[
\log Q_{n+m}(\We,\Sigma)\geq \log Q_n(\We,\Sigma)+\log Q_m(\We,\Sigma)
\]
and so, by Fekete's lemma 
(see Lemma~\ref{lemSupadditive} in the Appendix), we have that $Q(\We,\Sigma)$ always exists
and is equal to $\sup_{n\in\en}\sqrt[n]{Q_n(\We,\Sigma)}$. In particular, we
see the inequality 
$Q(\We,\Sigma)^n\geq Q_n(\We,\Sigma)$ for all $n$.
\end{remark}

Obviously, if $Q(\We,\Sigma)>1$ then there exists $n$ such that $Q_n(\We,\Sigma)>1$ and, by
Corollary~\ref{corClosedCover1}, $\Sigma_\We$ is a Möbius number system. Let us
now examine what happens when $Q(\We,\Sigma)<1$.

\begin{theorem}\label{thm-nonexistent3}
Let $\{F_a:a\in A\}$ be a Möbius iterative system. Then there 
is no interval almost cover $\We$ and no subshift $\Sigma$ compatible with $\We$
simultaneously satisfying:
\begin{enumerate}
\item $Q(\We,\Sigma)<1$
\item $\Sigma_\We$ is a Möbius number system for $\{F_a:a\in A\}$.
\item For every $w\in\Sigma_\We$, $\bigcap_{k=1}^\infty
\oW_{w_{[0,k)}}=\{\Phi(w)\}$.
\end{enumerate}
\end{theorem}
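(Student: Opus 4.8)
The plan is to argue by contradiction: suppose some $\We$ and some compatible $\Sigma$ satisfy (1)--(3), and show that $\Sigma_\We$ then fails to be a number system after all. My first move is to upgrade condition (3) into the exact identity $\oW_v=\Phi([v]\cap\Sigma_\We)$ for all $v\in\Astar$. The inclusion $\Phi([v]\cap\Sigma_\We)\subset\oW_v$ is automatic, since $vw\in\Sigma_\We$ forces $\Phi(vw)\in\bigcap_k\oW_{(vw)_{[0,k)}}\subset\oW_v$. For the reverse inclusion I would reuse the extension argument from the proof of Theorem~\ref{thmClosedCover1}: given $x\in\oW_v$ with, say, $[x,x+\epsilon_0]\subset\oW_v$, compatibility together with Lemma~\ref{lemCircleCover} lets me grow a word $w$ with $vw\in\el(\Sigma)$ and $[x,x+\epsilon_k]\subset\oW_{vw_{[0,k)}}$ for every $k$, so that $vw\in\Sigma_\We$ and $x\in\bigcap_k\oW_{(vw)_{[0,k)}}$. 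This step uses only compatibility and interval geometry, not the hypothesis $W_a\subset V_a$, so it is available here; and by (3) the intersection is $\{\Phi(vw)\}$, giving $x=\Phi(vw)$. Alongside this I record a uniform shrinking estimate: the maps $w\mapsto\mathrm{diam}\,\oW_{w_{[0,k)}}$ are continuous on the compact space $\Sigma_\We$, nonincreasing in $k$, and tend pointwise to $0$ by (3), so Dini's theorem gives $\Delta_k:=\sup_{w\in\Sigma_\We}\mathrm{diam}\,\oW_{w_{[0,k)}}\to0$.

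Next I exploit (1). Since $Q(\We,\Sigma)<1$ and $Q(\We,\Sigma)^n\ge Q_n(\We,\Sigma)$, we get $Q_n(\We,\Sigma)\to0$ geometrically. For each $n$ pick a minimiser $u^{(n)}\in\el(\Sigma_\We)\cap A^n$ and a point $x^{(n)}\in\oW_{u^{(n)}}$ with $\dot{(F^{-1}_{u^{(n)}})}(x^{(n)})=Q_n(\We,\Sigma)$. By the identity above, $x^{(n)}=\Phi(w^{(n)})$ for some $w^{(n)}\in[u^{(n)}]\cap\Sigma_\We$; that is, the point where $F^{-1}_{u^{(n)}}$ contracts most over $\oW_{u^{(n)}}$ is itself a represented point. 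Writing $\dot{(F^{-1}_{u^{(n)}})}(x)=\bigl(|\beta_{u^{(n)}}|^2\,|x-d_{u^{(n)}}|^2\bigr)^{-1}$ with $d_{u^{(n)}}=\alpha_{u^{(n)}}/\cc\beta_{u^{(n)}}$ (from the formula in Theorem~\ref{thmCharacteriseConvergence}), the smallness of $Q_n(\We,\Sigma)$ forces $|\beta_{u^{(n)}}|\to\infty$, so the expansion interval $V_{u^{(n)}}$ degenerates to a point. Equivalently, setting $y^{(n)}:=\Phi(\sigma^nw^{(n)})=F^{-1}_{u^{(n)}}(x^{(n)})$, the map $F_{u^{(n)}}$ \emph{expands} by the factor $\dot{(F_{u^{(n)}})}(y^{(n)})=1/Q_n(\We,\Sigma)\to\infty$ at the represented point $y^{(n)}$.

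The intended contradiction is that such unbounded expansion at a represented point is incompatible with $\sigma^nw^{(n)}$ being a representation of $y^{(n)}$ inside a number system satisfying (3). To make this robust against the moving base point I would stabilise the diagonal through an invariant measure. Consider the continuous positive cocycle $L(w)=\dot{(F^{-1}_{w_0})}(\Phi(w))$ on the compact space $\Sigma_\We$; by the chain rule $\prod_{j=0}^{n-1}L(\sigma^jw^{(n)})=\dot{(F^{-1}_{u^{(n)}})}(\Phi(w^{(n)}))=Q_n(\We,\Sigma)$, so $\tfrac1n\sum_{j=0}^{n-1}\log L(\sigma^jw^{(n)})=\tfrac1n\log Q_n(\We,\Sigma)\to\log Q(\We,\Sigma)<0$. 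Forming the empirical measures $\nu_n=\tfrac1n\sum_{j=0}^{n-1}\delta_{\sigma^jw^{(n)}}$ and passing to a weak$^*$ limit yields a $\sigma$-invariant probability measure $\nu$ on $\Sigma_\We$ with $\int\log L\,\dd\nu=\log Q(\We,\Sigma)<0$. Birkhoff's theorem then produces a genuine fixed word $\omega\in\Sigma_\We$ along which $\dot{(F^{-1}_{\omega_{[0,n)}})}(\Phi(\omega))\to0$ exponentially fast, so that $|\beta_{\omega_{[0,n)}}|\to\infty$ while $F_{\omega_{[0,n)}}$ expands arbitrarily strongly at the represented point $\Phi(\sigma^n\omega)$. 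On a single orbit one can now confront this with parts (4) and (6) of Theorem~\ref{thmCharacteriseConvergence} ($d_{\omega_{[0,n)}}\to\Phi(\omega)$ and $\oV_{\omega_{[0,n)}}\to\{\Phi(\omega)\}$) and with Corollary~\ref{corConvergencyTools}, together with the uniform bound $\Delta_k\to0$, to contradict the continuous surjective representation furnished by $\Sigma_\We$.

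The main obstacle is precisely this last confrontation. The strong expansion lives on a window around $\Phi(\sigma^n\omega)$ whose width is of order $|\beta_{\omega_{[0,n)}}|^{-1}\to0$, so the pointwise statement $\dot{(F_{\omega_{[0,n)}})}(\Phi(\sigma^n\omega))\to\infty$ does not by itself transfer to a neighbourhood or to a limit transformation, and the naive attempts to contradict representation (via $(F_n\mu)(I)\to1$ for a fixed interval $I$) survive because the expansion is captured by the shrinking $V_{\omega_{[0,n)}}\subset I$. What must be controlled is the delicate interplay between the magnitude $|\beta_{\omega_{[0,n)}}|$ and the offset $|d_{\omega_{[0,n)}}-\Phi(\omega)|$: the exponential contraction of $\dot{(F^{-1}_{\omega_{[0,n)}})}(\Phi(\omega))$ forces $\Phi(\omega)$ to sit many $V$-widths away from the centre of its own expansion interval, and I expect to derive the contradiction by combining the submultiplicative bound $|\beta_{\omega_{[0,n)}}|\le C^n$ (from $\|F_{\omega_{[0,n)}}\|\le\prod_j\|F_{\omega_j}\|$) with $d_{\omega_{[0,n)}}\to\Phi(\omega)$ to show that $\oW_{\omega_{[0,n)}}$ cannot shrink to the singleton $\{\Phi(\omega)\}$ demanded by condition (3).
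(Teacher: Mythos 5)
Your route to the key intermediate object is sound and genuinely different from the paper's. The paper produces a word $w\in\Sigma_\We$ and a point $x=\Phi(w)$ with $\dot{(F^{-1}_{w_{[0,k)}})}(x)\leq R^k$ (some $R<1$) by a Gurvits-style minimization of $\alpha(v)=\min_{\oW_v}\dot{(F^{-1}_v)}/R^{|v|}$ over words of length at most $n$, using the observation that prefixes of minimizers inherit the bound, and then invokes hypothesis (3) directly to get $\Phi(w)=x$. You instead first upgrade (3) to the identity $\oW_v=\Phi([v]\cap\Sigma_\We)$ (your argument for this is correct: the extension construction needs only compatibility and Lemma~\ref{lemCircleCover}, and (3) identifies the constructed point), and then run a Krylov--Bogolyubov plus Birkhoff argument on the continuous cocycle $L(w)=\dot{(F^{-1}_{w_0})}(\Phi(w))$ to obtain a word $\omega$ along which $\dot{(F^{-1}_{\omega_{[0,n)}})}(\Phi(\omega))$ decays exponentially. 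This ergodic route is valid (note that it needs the upgraded identity, which the paper's route does not, and it yields the decay only eventually rather than for all $n$ --- a repairable difference), so up to this point you have a legitimate alternative to the paper's first half.

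The genuine gap is the final step, which you acknowledge leaving open, and the specific repair you propose cannot work. Writing $\dot{(F^{-1}_u)}(y)=|\beta_u|^{-2}|y-d_u|^{-2}$, your exponential decay gives $|\Phi(\omega)-d_n|\geq R^{-n/2}|\beta_n|^{-1}$, and submultiplicativity gives $|\beta_n|\leq C^n$; combined these yield only $|\Phi(\omega)-d_n|\geq (R^{1/2}C)^{-n}$, which is perfectly compatible with $d_n\to\Phi(\omega)$ whenever $C>R^{-1/2}$ (e.g.\ $|\Phi(\omega)-d_n|\asymp(R^{1/2}C)^{-n}$). So no contradiction can be extracted from the magnitudes $|\beta_n|$ and $|d_n-\Phi(\omega)|$ alone, and likewise diam$\,\oW_{\omega_{[0,n)}}\to0$ is not obstructed by pointwise decay of the derivative. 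What is needed --- and what the paper supplies here --- is a uniform distortion estimate: for every $\gamma>1$ there is $\delta>0$ such that $\rho(y,z)<\delta$ implies $\dot{(F^{-1}_a)}(z)\leq\gamma\,\dot{(F^{-1}_a)}(y)$ for all $a\in A$. Choosing $\gamma R<1$, an induction along the orbit shows that the cumulative contraction factor $(\gamma R)^k\leq 1$ keeps the images of the whole ball $B_\delta(\Phi(\omega))$ within $\delta$ of the orbit of $\Phi(\omega)$, whence $\dot{(F^{-1}_{\omega_{[0,k)}})}(z)\leq(\gamma R)^k<1$ for every $z\in B_\delta(\Phi(\omega))$ and every $k$. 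Consequently $\oV_{\omega_{[0,k)}}\cap B_\delta(\Phi(\omega))=\emptyset$ for all $k$, contradicting part (6) of Theorem~\ref{thmCharacteriseConvergence}, which forces $\oV_{\omega_{[0,k)}}\to\{\Phi(\omega)\}$. Without this propagation of the pointwise decay to a fixed neighborhood, your argument stops exactly one step short of the theorem.
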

\begin{remark}
The condition (3) is a quite reasonable demand. In particular, 
it is satisfied by all systems such that
$\Phi([u]\cap\Sigma_\We)=\oW_u$ for every $u\in\Aplus$.
\end{remark}
\begin{proof}
Assume such $\We$ and $\Sigma$ exist. Choose any number $R$ satisfying the
inequalities $Q(\We,\Sigma)<R<1$.
We first find $w\in\Sigma_\We$ and $x=\Phi(w)$ such that for all $k$ it is true
that $\dot{(F^{-1}_{w_{[0,k)}})}(x)\leq R^k$, then we prove that such a pair may
not exist in any number system.
The method used for the first step is a modification of the approach from
the proof of Lemma 1.1 in \cite{Gurvits}.

For $v\in\el(\Sigma_\We)$ denote 
\[
\alpha(v)=\frac{\min\{\dot{(F^{-1}_v)}(x):x\in \oW_v \}}{R^{|v|}}.
\]
Denote by $v^{(n)}$ the word of length at most $n$ such that
$v^{(n)}\in\el(\Sigma_\We)$ and $\alpha(v^{(n)})$ is
minimal. Label $x^{(n)}$ the minimum point corresponding to $\alpha(v^{(n)})$. 
Observe that 
\[
\alpha(v^{(n)})=\frac{Q_{\left|v^{(n)}\right|}(\We,\Sigma)}{R^{\left|v^{(n)}\right|}}
\leq \left(\frac{Q(\We,\Sigma)}{R}\right)^{|v^{(n)}|}.
\]
Elementary calculus shows that
for $n$ tending to infinity $|v^{(n)}|\to \infty$ and $\alpha(v^{(n)})\to 0$.

The crucial observation is that when $v^{(n)}=uv$ for $u,v\in\Astar$, we have
the inequality
$\dot{(F^{-1}_{u})}(x^{(n)})\leq R^{|u|}$. Assume the contrary. Then:
\[
\alpha(v^{(n)})=\frac{\dot{(F^{-1}_{uv})}(x^{(n)})}{R^{|uv|}}=
\frac{\dot{(F^{-1}_{v})}(F^{-1}_u(x^{(n)}))}{R^{|v|}}\cdot
\frac{\dot{(F^{-1}_{u})}(x^{(n)})}{R^{|u|}}
\]
and as $\dot{(F^{-1}_{u})}(x^{(n)})> R^{|u|}$ we have
\[
\alpha(v^{(n)})>\frac{\dot{(F^{-1}_{v})}(F^{-1}_u(x^{(n)}))}{R^{|v|}}=\alpha(v),
\]
so we should have chosen $v$ instead of $v^{(n)}$.

As the length of $v^{(n)}$ tends to infinity, we can find $w\in\Aomega$ such that each
$w_{[0,k)}$ is a prefix of infinitely many members of
$\{v^{(n)}\}_{n=1}^\infty$. Take the subsequence $\{v^{(n_k)}\}_{k=1}^\infty$
so that $w_{[0,k)}=v^{(n_k)}_{[0,k)}$ for every $k$. As $\Te$ is compact we can
furthermore choose $n_k$ so that $x^{(n_k)}$ converge to some $x$ for
$k\to\infty$.

We want to show that  $\dot{(F^{-1}_{w_{[0,k)}})}(x)\leq R^k$ and $\Phi(w)=x$.

To prove the first claim, fix $k$. Then $w_{[0,k)}$ is the prefix of
$v^{(n_l)}$ for all $l\geq k$. Therefore,
$\dot{(F^{-1}_{w_{[0,k)}})}(x^{(n_l)})\leq R^k$ whenever $l\geq k$. But
the function $\dot{(F^{-1}_{w_{[0,k)}})}$ is continuous and $x^{(n_l)}$
converge to $x$. This means $\dot{(F^{-1}_{w_{[0,k)}})}(x)\leq R^k$.

To prove $\Phi(w)=x$, fix $k$ again. Then whenever $l\geq k,\,x^{(n_l)}\in \oW_{w_{[0,n_l)}}\subset
\oW_{w_{[0,k)}}$.
Because $\oW_{w_{[0,k)}}$ is closed and $x$ the limit of $x^{(n_l)}$, we have
$x\in\oW_{w_{[0,k)}}$. Because $k$ in the above argument was arbitrary, 
$x\in\oW_{w_{[0,k)}}$ for all $k$. By the assumption (3) on $\Sigma$ and
$\We$ we obtain $\Phi(w)=x$. 

It remains to show that such a pair $w$, $x$ can not exist in any Möbius number
system. We will prove that when $\dot{(F^{-1}_{w_{[0,k)}})}(x)\leq R^k$ for all
$k$ then it
is not true that $\oV_{w_{[0,k)}}\to\{x\}$.

We will need a little observation: For any $\gamma>1$ there exists $\delta>0$
such that whenever $x,y\in\Te$ are such that $\rho(x,y)<\delta$ then
$\forall a\in A,\,\dot{(F^{-1}_a)}(y)\leq \gamma \cdot\dot{(F^{-1}_a)}(x)$.

This observation is actually an easy consequence of $\dot{F}(x)>0$ and
$\dot{F}$ being (uniformly) continuous for any MT $F$: The function $\ln(\dot F(x))$ is
continuous, therefore for any $\epsilon>0$ there exists $\delta$ such that
$\rho(x,y)<\delta$ implies:
\begin{eqnarray*}
\ln(\dot F(y))&\leq& \ln(\dot F(x))+\epsilon\\
\dot F(y)&\leq& e^\epsilon\cdot\dot F(x)
\end{eqnarray*}
Letting $F=F^{-1}_a$ and $\epsilon=\ln \gamma$, we obtain some $\delta_a>0$.
Now let $\delta=\min\{\delta_a:a\in A\}$ to prove our observation.

 Choose $\gamma>1$ so that $\gamma R<1$ and let
$\delta>0$ be such that if $\rho(z,y)<\delta$ then
$\forall a\in A,\,\dot{(F^{-1}_a)}(y)\geq \gamma \cdot\dot{(F^{-1}_a)}(z)$.

It will be enough to prove that whenever $\rho(x,y)<\delta$ and $k\geq 1$, the inequality
\[
\dot{(F^{-1}_{w_{[0,k)}})}(y)\leq(\gamma R)^k
\]
holds. As $(\gamma R)^k<1$, we have $(x-\delta,x+\delta)\cap \oV_{w_{[0,k)}}=\emptyset$,
a contradiction with $\Phi(w)=x$ by part (6) of Theorem~\ref{thmCharacteriseConvergence}.

Denote $x_i=F^{-1}_{w_{[0,i)}}(x)$ and $y_i=F^{-1}_{w_{[0,i)}}(y)$ and let
$k\geq 1$.

For $k=1$ we have from the definition of $\delta$: 
\[
\dot{ (F^{-1}_{w_0})}(y)\leq \gamma \dot {(F^{-1}_{w_0})}(x)\leq \gamma R.
\]

For $k>1$ write:
\begin{eqnarray*}
\dot{(F^{-1}_{w_{[0,k)}})}(x)&=&
\dot{(F^{-1}_{w_0})}(x)\cdot
\dot{(F^{-1}_{w_1})}(x_1)
\cdots
\dot{(F^{-1}_{w_{k-1}}})(x_{k-1})\\
\dot{(F^{-1}_{w_{[0,k)}})}(y)&=&
\dot{(F^{-1}_{w_0})}(y)\cdot
\dot{(F^{-1}_{w_1})}(y_1)
\cdots
\dot{(F^{-1}_{w_{k-1}}})(y_{k-1}).
\end{eqnarray*}
We have $\rho(x,y)<\delta$. Let $I$ be the shorter closed interval between
$x$ and $y$. Then for $z\in I$ we have $\rho(x,z)<\delta$ and so
$\dot{(F^{-1}_{w_0})}(z)<1$. Thus $F^{-1}_{w_0}$ contracts $I$ and so
\[
\rho(x_1,y_1)=\rho(F^{-1}_{w_0}(x_0),F^{-1}_{w_0}(y_0))\leq \rho(x_0,y_0)<\delta.
\]
By the same argument, $F^{-1}_{w_1}$ contracts the shorter interval 
between $x_1$ and $y_1$ and so
$\rho(x_2,y_2)\leq\rho(x_1,y_1)<\delta$. Continuing in this manner, we obtain
that for all $i$ we have $\rho(x_i,y_i)<\delta$ and so
$\dot{(F^{-1}_{w_i})}(y_i)\leq\gamma\dot{F^{-1}_{w_i}}(x_i)$. Calculating the products,
we obtain:
\[
\dot{(F^{-1}_{w_{[0,k)}})}(y)\leq\gamma^k\dot{(F^{-1}_{w_{[0,k)}})}(x)\leq\gamma^kR^k=(\gamma
R)^k<1,
\]
concluding the proof.
\end{proof}

One might ask whether the condition (3) was necessary in
Theorem~\ref{thm-nonexistent3}. There exist trivial and less trivial examples
showing that (1) and (2) can indeed happen at once.

As a trivial example, consider the signed binary system from
Example~\ref{expbinary} with $W_i=\Te$ for $i\in\{0,1,\one,2\}$ and $\Sigma\subset
\{0,1,\one,2\}^\omega$ defined by forbidding words
$20,02,12,\one2,1\one,\one1$. It is easy to see that $\Sigma$ is compatible
with $\We$ and $\Sigma_\We=\Sigma$. As we have already shown, $\Sigma$ is a
Möbius number system. Observe that $F_0$ is hyperbolic with fixed points $\pm i$ and
$\dot{(F^{-1}_0)}(i)=\frac12$. Because $0^\omega\in\Sigma$, we have $Q_n(\We,\Sigma)\leq
\frac1{2^n}$ for each $n$ and so $Q(\We,\Sigma)\leq \frac12 <1$.

One might argue that we have cheated by taking $W_i=\Te$, that maybe some sort
of size limit on $W_i$ or allowing only shifts of the form $(\Aomega)_\We$ might prevent
(1) and (2) from happening simultaneously. While we are unable to account for
all the possible modifications of Theorem~\ref{thm-nonexistent3}, we show a less 
trivial number system that demonstrates the limitations of
Theorem~\ref{thm-nonexistent3}.

\begin{example}
Take the hyperbolic number system for $n=4$ and $r=\sqrt2-1$ from \cite{Kurka}. 
This iterative system consists of four hyperbolic
transformations as depicted in Figure~\ref{figslowwalk}. This system consists
of four hyperbolic MTs $\{F_0,F_1,F_2,F_3\}$ conjugated by rotation.
The transformation $F_1$ is a contraction to 1, $F_0$ has the stable fixed point $-i$
and unstable fixed point $i$, $F_2=F_0^{-1}$ and
$F_3=F_1^{-1}$. Moreover, the parameter of the contraction is chosen so that
\[
V_0=(e^{-3\pi/4i},e^{-\pi/4i}),\,
V_1=(e^{-\pi/4i},e^{\pi/4i}),\,
V_2=(e^{\pi/4i},e^{3\pi/4i}),\,
V_3=(e^{ 3\pi/4i},e^{-3\pi/4i}).
\]
As $\{V_i:i=0,1,2,3\}$ covers $\Te$, we obtain that the shift $\Omega$ from
Theorem~\ref{thmClosedCover2} is a Möbius number system. An argument similar to
the one used when analyzing Example~\ref{exp3parabolic} shows
that $\Omega$ is defined by the forbidden factors $02,20,13,31$.
\picture{The hyperbolic number system for $n=4$,
$r=\sqrt2-1$}{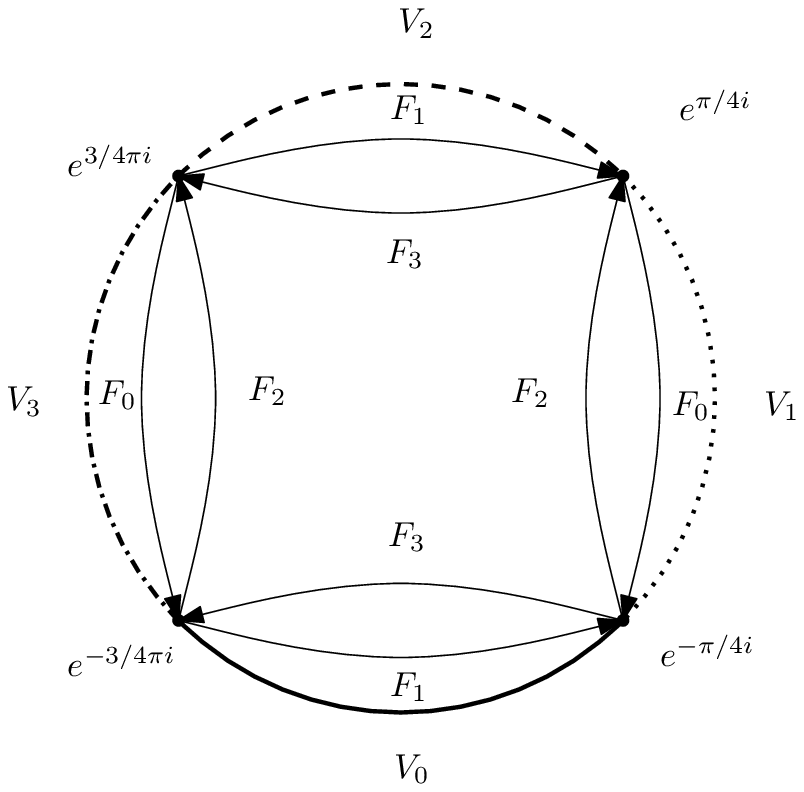}{figslowwalk}

However, we can obtain $\Omega$ as the interval shift of a rather different
interval almost cover. Take $W_i=\Te\setminus\oV_i$ for $i=0,1,2,3$. It turns out that
$\left(\Aomega\right)_\We=\Omega$. Moreover, $W_{0^n}$ always contains $-1$,
the unstable fixed point of $F_0$. Then $\dot{(F_0^{-1})}(-1)=q<1$ gives us 
$Q_n(\We,\Sigma)\leq q^n$ and so $Q(\We,\Sigma)\leq q<1$.
\end{example}

We have shown that if $Q(\We,\Sigma)>1$ then $\Sigma_\We$ is a Möbius number system, 
while if $Q(\We,\Sigma)<1$ then $\Sigma_\We$ might be a number system only if
it does not satisfy the rather reasonable condition $\Phi([u]\cap\Sigma_\We)=\oW_u$.

In the remaining case $Q(\We,\Sigma)=1$, the scales might tilt either way. 
We know that if $Q_n(\We,\Sigma)=1$ for some $n$ and no $F_v,\,v\in A^n$ is a rotation,
then $\Sigma_\We$ is a Möbius number system by Corollary~\ref{corClosedCover1}.
However, having $Q(\We,\Sigma)=1$ together with all the transformations $F_u$
different from rotations is not enough to obtain a Möbius number system:

\begin{example}
Take the three parabolic transformations from Example~\ref{exp3parabolic} and
let $W_a=(C,A)$, $W_b=(A,B)$, $W_c=(B,C)$ and $\Sigma=\Aomega$. 
It is easy to see that $W_{e^n}=W_e$ for every $e\in\{a,b,c\}$ while
$W_u=\emptyset$ whenever $u$ contains two different letters.
Therefore
$\Sigma_\We=\{a^\omega,b^\omega,c^\omega\}$, so this is not a Möbius
number system.

Now consider the transformation $F_{a^n}$. It is parabolic with the fixed point $a$
and $\dot{(F^{-1}_{a^n}(x))}$ is minimal for $x=C$. Denote
$f(n)=\dot{(F^{-1}_{a^n})}(C)$. Easily, $0<f(n)<1$. What is more,
$f(n)=|g(n)|^{-2}$ where $g$ is a linear polynomial in $n$ because $F_a$ is
parabolic (and therefore the matrix of $F_a$ is similar to
$\left(\begin{smallmatrix}1&\pm 1\\0&1\end{smallmatrix}\right)$).

It follows that $\sqrt[n]{f(n)}\to 1$ and the same argument holds for $b$ and $c$, so
we have $Q(\We,\Sigma)=1$.
\end{example}

As we have shown, the number $Q(\We,\Sigma)$, while useful, is not a perfect
solution for characterizing Möbius number systems. We will shortly mention one
direction (due to Petr Kůrka), in which it can be improved.

Let
$
Q(\Sigma)=\sup\{Q(\We,\Sigma):\We \hbox{ is an interval almost cover compatible with
$\Sigma$}\}.$

The number $Q(\Sigma)$ depends only on the iterative system and $\Sigma$.
Observe that $Q(\Sigma)>1$ means that there exists an interval almost cover $\We$ such that $\Sigma_\We$
is a Möbius number system while $Q(\Sigma)<1$ signifies that any possible
$\Sigma_\We$ is either not a Möbius number system at all or it is badly behaved
with respect to the condition $\Phi([u]\cap\Sigma_\We)=\oW_u$. It remains open
if $Q(\Sigma)$ has any other interesting properties. For example, it might be
possible that $Q(\Sigma)<1$ implies that no $\Sigma_\We$ is a Möbius system.

\subsection{Existence results}\label{secExistence}
The most basic existence question one might ask is whether there exists any
Möbius number system at all for a given iterative system. This problem is not
solved yet, however, we can offer a partial answer.

Originally, the following theorem comes from \cite{Kurka2}. We have slightly
modified it so that it refers to $\Te$ instead of the extended real line. It gives
one sufficient and one necessary condition for the existence of a Möbius
iterative system.

\begin{theorem}[Theorem 9, \cite{Kurka2}]\label{thmTheorem9}
Let $F:\Aplus\times\Te\to\Te$ be a Möbius iterative system.
\begin{enumerate}
\item If $\overline{\bigcup_{u\in\Aplus} V_u} \neq \Te$ then
$\Phi(\ex_F)\neq\Te$.
\item If $\{V_u:u\in \Aplus\}$ is a cover of $\Te$ then $\Phi(\ex_F)=\Te$ and
there exists a subshift $\Sigma\subset \ex_F$ on which $\Phi$ is continuous and
$\Phi(\Sigma)=\Te$.
\end{enumerate}
\end{theorem}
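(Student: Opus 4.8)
\textbf{Part (1).} The plan is to show the inclusion $\Phi(\ex_F)\subset\overline{\bigcup_{u\in\Aplus}V_u}$, after which the hypothesis $\overline{\bigcup_{u\in\Aplus}V_u}\neq\Te$ immediately forces $\Phi(\ex_F)\neq\Te$. Take any $w\in\ex_F$ and set $x=\Phi(w)$. Since the sequence $\{F_{w_{[0,n)}}\}$ represents a point, only finitely many of its terms are rotations (otherwise, by the Remark after Theorem~\ref{thmCharacteriseConvergence}, it would represent nothing), so the theorem applies and its equivalence $(1)\Leftrightarrow(6)$ gives $\oV_{w_{[0,n)}}\to\{x\}$ in the Hausdorff metric. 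For every large $n$ the arc $\oV_{w_{[0,n)}}$ is nonempty and, because $w_{[0,n)}\in\Aplus$, it is contained in the closed set $\overline{\bigcup_{u\in\Aplus}V_u}$; as these arcs shrink to $x$, the point $x$ is a limit of points of that closed set and hence belongs to it. This holds for every $w\in\ex_F$, which is exactly what we want.

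\textbf{Part (2), setup.} Each $V_u$ is a single open arc by Lemma~\ref{lemMobiology}(5), so $\{V_u:u\in\Aplus\}$ is an open cover of the compact circle $\Te$; by compactness I would extract a finite subcover indexed by a finite set $B\subset\Aplus$, giving $\bigcup_{b\in B}V_b=\Te$. Now view $B$ as an alphabet and $\{F_b:b\in B\}$ as a M\"obius iterative system: the family $\Ve^B=\{V_b:b\in B\}$ of expansion intervals of the $F_b^{-1}$ is an interval almost cover of $\Te$. Theorem~\ref{thmClosedCover2} then yields that the interval shift $\Omega^B=(\Bomega)_{\Ve^B}$ is a M\"obius number system for $\{F_b:b\in B\}$; in particular $\Phi$ (the representation map, which depends only on the sequence of maps and so makes sense over either alphabet) is defined and continuous on the compact subshift $\Omega^B\subset\Bomega$, with $\Phi(\Omega^B)=\Te$. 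Let $\psi:\Bomega\to\Aomega$ be the concatenation map and $n_0=\max\{|b|:b\in B\}$, and define
\[
\Sigma=\bigcup_{k=0}^{n_0-1}\sigma^k\big(\psi(\Omega^B)\big).
\]
Because $\Omega^B$ is compact and $\psi,\sigma$ are continuous, $\Sigma$ is a finite union of compact sets, hence compact; and since $\sigma^{|b|}\circ\psi=\psi\circ\sigma$ on each cylinder $[b]$, every further shift of an element of $\psi(\Omega^B)$ already lies in this union, so $\Sigma$ is shift-invariant. Thus $\Sigma$ is a subshift of $\Aomega$.

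\textbf{Transfer and surjectivity.} Next I would transfer the representation to $A$ exactly as in the proof of Corollary~\ref{corClosedCover2}: for $u\in\Omega^B$ the finite set $E=\{F_v(0): v\text{ is a prefix of some }b\in B\}$ is compact and lies in $\Int(\De)$, so $F_{\psi(u)_{[0,n)}}(0)\in F_{\psi(u_{[0,k)})}(E)\to\{\Phi(u)\}$ by Theorem~\ref{thmCharacteriseConvergence}; hence $\psi(u)\in\ex_F$ and $\Phi(\psi(u))$ equals the point $\Phi(u)$ that $u$ represents over $B$. Using $\Phi(\sigma(w))=F^{-1}_{w_0}(\Phi(w))$ repeatedly, every shift $\sigma^k(\psi(u))$ is also in $\ex_F$, so $\Sigma\subset\ex_F$. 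Surjectivity follows at once, since $\Phi(\Sigma)\supset\Phi(\psi(\Omega^B))=\Phi(\Omega^B)=\Te$; and then $\Phi(\ex_F)\supset\Phi(\Sigma)=\Te$, proving $\Phi(\ex_F)=\Te$.

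\textbf{Continuity (the main obstacle).} The delicate point, and the place where I expect the real work, is continuity of $\Phi$ on $\Sigma$, because $\psi$ need not be injective (the code $B$ need not be uniquely decipherable) and does not intertwine the two shifts. I would prove continuity piece by piece and glue. For each fixed $k$ the map $u\mapsto\Phi(\sigma^k(\psi(u)))=F^{-1}_{\psi(u)_{[0,k)}}(\Phi(u))$ is continuous on $\Omega^B$: the prefix $\psi(u)_{[0,k)}$ is determined by finitely many initial blocks of $u$ and so is locally constant, while $\Phi$ is continuous on $\Omega^B$ by Theorem~\ref{thmClosedCover2}, so locally this map is a composition of continuous maps. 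Since $\sigma^k\circ\psi$ is a continuous surjection from the compact space $\Omega^B$ onto the Hausdorff space $\sigma^k(\psi(\Omega^B))$, it is a quotient map; as the continuous map above factors through it, $\Phi$ restricted to the closed piece $\sigma^k(\psi(\Omega^B))$ is continuous. Finally, $\Sigma$ is the union of the finitely many closed pieces $\sigma^k(\psi(\Omega^B))$, $0\le k<n_0$, so the pasting lemma for finite closed covers gives continuity of $\Phi$ on all of $\Sigma$. This completes the construction of the required subshift.
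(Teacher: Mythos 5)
Your proposal is correct, and for part (2) it takes essentially the same route the paper itself uses: the paper defers Theorem~\ref{thmTheorem9} to \cite{Kurka2}, but its proof of the strengthened version (Corollary~\ref{corClosedCover3}) is exactly your construction --- a finite subcover $B$ treated as a new alphabet, Theorem~\ref{thmClosedCover2} applied over $B$, and then $\Sigma=\bigcup_{k}\sigma^{k}(\psi(\Omega^B))$ --- with the details the paper dismisses as ``straightforward'' (shift-invariance, transfer of representation via the compact set $E$, and continuity via the quotient-map and pasting arguments) filled in correctly by you. Your part (1), which the paper does not prove at all, is a clean and correct application of the equivalence $(1)\Leftrightarrow(6)$ of Theorem~\ref{thmCharacteriseConvergence}, including the necessary observation that a representing sequence can contain only finitely many rotations.
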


Note that if the condition of (1) is satisfied then there is no Möbius number
system for $\{F_a:a\in A\}$. We improve Theorem~\ref{thmTheorem9} by weakening
the condition in part (2).

Observe that (by compactness of $\Te$) if $\{V_u:u\in\Aplus\}$ cover $\Te$ then
there exists a finite $B\subset \Aplus$ such that $\{V_b:b\in B\}$ cover $\Te$. In the spirit
of our previous results, we show that in part (2) of Theorem~\ref{thmTheorem9} it suffices to demand that there exists a
finite $B\subset \Aplus$ such that the \emph{closed sets} $\{\oV_b:b\in B\}$ cover $\Te$.

\begin{corollary}\label{corClosedCover3}
Let $\{F_a:a\in A\}$ be a Möbius iterative system. Assume that there exists
a finite subset $B$ of $\Aplus$ such that $\{\oV_b:b\in B\}$ is a cover of
$\Te$. Then there exists a subshift $\Sigma \subset \Aomega$ that, together with
the iterative system $\{F_a:a\in A\}$, forms a Möbius number system.
\end{corollary}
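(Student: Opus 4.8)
The plan is to reduce the statement to Theorem~\ref{thmClosedCover2} applied to the word alphabet $B$, and then to transport the resulting number system back to $\Aomega$ by the block-decoding technique from the proof of Corollary~\ref{corClosedCover2}. First I regard $B$ as a finite alphabet and $\{F_b:b\in B\}$ as a M\"obius iterative system in its own right. Since $\{\oV_b:b\in B\}$ covers $\Te$ by hypothesis, the family $\Ve^B=\{V_b:b\in B\}$ is an interval almost cover of $\Te$, so Theorem~\ref{thmClosedCover2} yields that the interval shift $\Omega_B=(\Bomega)_{\Ve^B}$ is a M\"obius number system for $\{F_b:b\in B\}$; in particular the associated map $\Phi_B$ is defined and continuous on $\Omega_B$ and $\Phi_B(\Omega_B)=\Te$.

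Next I relate $\Phi_B$ to the original $\Phi$. Let $\psi:\Bomega\to\Aomega$ be the continuous map that spells each block $b\in B$ out into its letters of $A$. Exactly as in the proof of Corollary~\ref{corClosedCover2}, put $E=\{F_v(0):\text{$v$ is a prefix of some $b\in B$}\}$, a finite subset of $\Int(\De)$; for $u\in\Omega_B$ every prefix of $\psi(u)$ has the form $\psi(u_{[0,k)})v$ with $v$ a prefix of some block, so $F_{\psi(u)_{[0,n)}}(0)\in F_{u_{[0,k)}}(E)$, and part~(7) of Theorem~\ref{thmCharacteriseConvergence} gives $F_{u_{[0,k)}}(E)\to\{\Phi_B(u)\}$. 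By part~(8) this forces $\Phi(\psi(u))=\Phi_B(u)$. Thus $\Phi$ is defined on $\psi(\Omega_B)$, agrees there with $\Phi_B$, and $\Phi(\psi(\Omega_B))=\Te$. Since the graph of $\Phi_B$ is compact and $(\psi,\id)$ is continuous, the set $G_0=\{(\psi(u),\Phi_B(u)):u\in\Omega_B\}$ is compact; being single-valued (weak$^*$ limits are unique, so $\Phi$ is a genuine partial function) it is the graph of $\Phi$ restricted to $\psi(\Omega_B)$, whence $\Phi$ is continuous there.

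It remains to produce a genuine subshift of $\Aomega$, since $\psi(\Omega_B)$ need not be shift invariant. I take $\Sigma=\overline{\bigcup_{k\ge0}\sigma^k(\psi(\Omega_B))}$, the smallest subshift of $\Aomega$ containing $\psi(\Omega_B)$. Using the identity $\Phi(\sigma w)=F_{w_0}^{-1}(\Phi(w))$ (Corollary~\ref{corCompose}) and the continuous map $T(w,x)=(\sigma w,F_{w_0}^{-1}(x))$ on $\Aomega\times\Te$, each $T^k(G_0)$ is the graph of $\Phi$ over $\sigma^k(\psi(\Omega_B))$, so that $G=\overline{\bigcup_{k\ge0}T^k(G_0)}\subset\Aomega\times\Te$ is compact, projects onto $\Sigma$ under $\pi_1$, and satisfies $\pi_2(G)\supseteq\Phi_B(\Omega_B)=\Te$. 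Everything reduces to the \emph{faithfulness} of $G$: that every $(w,x)\in G$ still satisfies $\Phi(w)=x$. Granting this, single-valuedness of $\Phi$ makes $G$ the graph of $\Phi|_\Sigma$; as $G$ is closed and $\Te$ is compact, $\Phi|_\Sigma$ is continuous, while $\pi_2(G)\supseteq\Te$ gives $\Phi(\Sigma)=\Te$, so $\Sigma$ is the desired M\"obius number system.

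Faithfulness is the main obstacle. Concretely, if $(w^{(j)},x^{(j)})\to(w,x)$ with $\Phi(w^{(j)})=x^{(j)}$ for each $j$, I must deduce $\Phi(w)=x$, that is $F_{w_{[0,n)}}(0)\to x$. For each fixed $n$ one has $F_{w_{[0,n)}}(0)=F_{w^{(j)}_{[0,n)}}(0)$ once $j$ is large, so the only issue is interchanging the limits in $n$ and $j$. I expect to secure this by an equicontinuity estimate coming from the \emph{uniform} expansion of the word system $\{F_b:b\in B\}$: Lemma~\ref{lemGrowingInterval} applied on the alphabet $B$ supplies a single modulus controlling how fast $\oV_{w^{(j)}_{[0,n)}}$ shrinks to $\{x^{(j)}\}$ (via part~(6) of Theorem~\ref{thmCharacteriseConvergence}), uniformly in $j$, after which passing to the limit is routine. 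Making the block parsing of an arbitrary element of $\Sigma$ precise enough to invoke this uniform estimate is the delicate point of the argument.
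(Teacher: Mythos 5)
Your reduction to Theorem~\ref{thmClosedCover2} over the block alphabet $B$, and the identification $\Phi(\psi(u))=\Phi_B(u)$ via the finite set $E$ and parts (7)--(8) of Theorem~\ref{thmCharacteriseConvergence}, is exactly the paper's route. The divergence, and the genuine gap, is in your final step: you define $\Sigma$ as the \emph{closure} of $\bigcup_{k\ge0}\sigma^k(\psi(\Omega_B))$, which forces you to prove the ``faithfulness'' claim that $\Phi(w)=x$ for every limit point $(w,x)$ of graph points, and you explicitly leave this as something you ``expect to secure'' by a uniform-expansion estimate whose block-parsing details you admit you have not made precise. As written this is an announcement, not an argument: nothing you have established controls limit points $w$ of the orbit union, since the approximating words $w^{(j)}$ may be shifted copies $\sigma^{k_j}(\psi(u^{(j)}))$ with $k_j\to\infty$ and with block parsings that change along the sequence, and the interchange of the limits in $n$ and $j$ is precisely what is at stake. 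The proof is therefore incomplete at its crucial point.

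The missing observation, which is how the paper finishes, is that the closure is unnecessary because the union is already closed. Let $d=\max\{|b|:b\in B\}$. For $u\in\Omega_B$ and any $k\ge 0$, write $k=|\psi(u_{[0,m)})|+r$ with $0\le r<|u_m|\le d$; then $\sigma^k(\psi(u))=\sigma^r(\psi(\sigma^m(u)))$ and $\sigma^m(u)\in\Omega_B$ by shift-invariance of $\Omega_B$. Hence $\bigcup_{k\ge0}\sigma^k(\psi(\Omega_B))=\bigcup_{r=0}^{d}\sigma^r(\psi(\Omega_B))$, a \emph{finite} union of compact sets (each $\sigma^r\circ\psi$ is continuous and $\Omega_B$ is compact), so it is compact and shift-invariant, i.e.\ already a subshift; no closure is taken and no new points appear. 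Every element of $\Sigma$ then carries an explicit parsing $\sigma^r(\psi(u))$, so $\Phi$ is defined on $\Sigma$ with $\Phi(\sigma^r(\psi(u)))=F^{-1}_{\psi(u)_{[0,r)}}(\Phi_B(u))$ by Corollary~\ref{corCompose}, surjectivity follows from $\Phi(\psi(\Omega_B))=\Phi_B(\Omega_B)=\Te$, and continuity follows because the graph of $\Phi_{|\Sigma}$ is the finite union of the compact sets $\{(\sigma^r(\psi(u)),F^{-1}_{\psi(u)_{[0,r)}}(\Phi_B(u))):u\in\Omega_B\}$, and a single-valued compact graph over a compact domain is the graph of a continuous map. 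With this one observation your ``faithfulness'' problem disappears entirely, and the rest of your argument goes through unchanged.
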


\begin{proof}
Take $B$ as our new alphabet. 
The set $\{V_b:b\in B\}$ is an interval almost cover. Therefore, we can apply
Theorem~\ref{thmClosedCover2} and obtain the Möbius number
system $\Omega\subset\Bplus$.

We now proceed similarly to the proof of Corollary~\ref{corClosedCover2}:
Denote by $\psi$ the natural map from $\Bomega$ to $\Aomega$. Let $\Sigma=\bigcup_{i=0}^d
\sigma^d(\psi(\Omega))$ where $d=\max\{|b|:b\in B\}$. It is then straightforward to show that $\Sigma$ is a
Möbius number system for the iterative system $\{F_a:a\in A\}$.
\end{proof}

While Theorem~\ref{thmTheorem9} gives a necessary condition for a Möbius number
system to exist, this condition is not very comfortable to use. In the
spirit of \cite{Kurka-iterated}, we offer a condition that is easier to
check.

Let $\{F_a:a\in A\}$ be a Möbius iterative system.
A nonempty closed set $W\subset \Te$ is \emph{inward} if 
$\bigcup_{a\in A}F_a(W)\subset\Int(W)$. All iterative
systems have the trivial inward  set $\Te$ and some systems have nontrivial
inward sets as well.

\begin{theorem}\label{thminward}
Let $\{F_a:a\in A\}$ be an iterative system with a nontrivial inward set.
Then there is no Möbius number system for $\{F_a:a\in A\}$.
\end{theorem}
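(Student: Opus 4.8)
The plan is to prove the sharper statement that every representable word already maps \emph{into} the inward set, i.e.\ $\Phi(\ex_F)\subseteq W$. Since $W\neq\Te$ by nontriviality, this immediately gives $\Phi(\Sigma)\subseteq W\subsetneq\Te$ for every subshift $\Sigma\subseteq\ex_F$, so no $\Sigma$ can satisfy $\Phi(\Sigma)=\Te$ and hence no M\"obius number system for $\{F_a:a\in A\}$ can exist.

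First I would record two elementary consequences of the inward condition $\bigcup_{a\in A}F_a(W)\subseteq\Int(W)$. On one hand, picking any $a\in A$ and any $p\in W$ shows $F_a(p)\in\Int(W)$, so $\Int(W)$ is a nonempty open subset of $\Te$ and therefore $\mu(W)>0$; writing $O=\Te\setminus W$, this gives $\mu(O)=c<1$. On the other hand, since each $F_a$ is a bijection of $\Te$ and $F_a(W)\subseteq\Int(W)\subseteq W$, passing to complements yields $O\subseteq F_a(O)$, that is $F_a^{-1}(O)\subseteq O$. A one-line induction on $|v|$, using $F_{av}^{-1}=F_v^{-1}\circ F_a^{-1}$ together with the monotonicity of preimages, then upgrades this to $F_v^{-1}(O)\subseteq O$ for every $v\in\Aplus$: the open set $O$ is forward invariant under all the inverse words.

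With these in hand the argument closes quickly. Suppose $w\in\ex_F$ and set $x=\Phi(w)$. If $x\in O$ then, $O$ being open, I choose an open arc $I$ with $x\in I\subseteq O$. By part (2) of Theorem~\ref{thmCharacteriseConvergence} we have $(F_{w_{[0,n)}}\mu)(I)\to 1$, and by definition of the push-forward $(F_{w_{[0,n)}}\mu)(I)=\mu\bigl(F_{w_{[0,n)}}^{-1}(I)\bigr)$. But $I\subseteq O$ and $F_{w_{[0,n)}}^{-1}(O)\subseteq O$ force $F_{w_{[0,n)}}^{-1}(I)\subseteq O$, whence $\mu\bigl(F_{w_{[0,n)}}^{-1}(I)\bigr)\leq\mu(O)=c<1$ for every $n$, contradicting convergence to $1$. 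Hence $x\in W$, proving $\Phi(\ex_F)\subseteq W$.

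Two points need care, and I expect them to be the only real subtleties. The first is the measure-theoretic input: the whole argument hinges on $\mu(O)<1$, which is precisely why one must observe at the outset that an inward set has nonempty interior (a single point, or any set with empty interior, can never be inward). The second, more technical, point is the applicability of Theorem~\ref{thmCharacteriseConvergence}, whose hypothesis is that only finitely many of the partial products $F_{w_{[0,n)}}$ are rotations; this is automatic here, because a rotation fixes $\mu$ (the uniform measure is rotation invariant), so a sequence with infinitely many rotational partial products cannot converge weak$^*$ to any $\delta_x$ and such a $w$ would not lie in $\ex_F$ at all. Beyond this rotation bookkeeping, the only thing to keep straight is the direction of the inward condition: it is phrased for the forward maps $F_a$, while part (2) speaks of the inverse maps $F_{w_{[0,n)}}^{-1}$, and the whole point of the complementation step is to reconcile the two.
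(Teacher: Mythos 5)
Your proof is correct and follows essentially the same route as the paper: both rest on the invariance of the inward set under all composed maps (you phrase it as $F_v^{-1}(O)\subseteq O$ for $O=\Te\setminus W$, the paper as $F_v(J)\subseteq W$ for $J\subseteq\Int(W)$), and both derive a contradiction with part (2) of Theorem~\ref{thmCharacteriseConvergence} by showing that preimages of a small interval around a point outside $W$ remain bounded away from full measure. Your complementation bookkeeping, the explicit statement $\Phi(\ex_F)\subseteq W$, and the remark about rotations are only cosmetic refinements of the paper's argument.
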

\begin{proof}
Assume that $W$ is a nontrivial inward set. Let $z\not\in W$. Because $W^c$ is
open there exists an open interval $I$ disjoint with $W$ and containing $z$. Assume
that $\Phi(w)=z$. Then $\lim_{k\to\infty}|F^{-1}_{w_{[0,k)}}(I)|=2\pi$.
However, $\Int W$ is nonempty so there exists an open interval $J\subset \Int
W$. Now for all $k$ we have $F_{w_{[0,k)}}(J)\subset W$ so
$J\cap F^{-1}_{w_{[0,k)}}(I)=\emptyset$. But then $|F^{-1}_{w_{[0,k)}}(I)|\leq
2\pi-|J|$, a contradiction.
\end{proof}

\subsection{Empirical data}\label{secData}
Searching for the solution to the existence problem, we have used a numerical
simulation to obtain insight in the behavior of MTs. The results suggest that
closing the gap in Theorem~\ref{thmTheorem9} is an achievable task.
 
We have studied the behavior of the iterative system $\{F_a,F_b\}$ consisting
of two hyperbolic transformations. The transformation $F_a$ has fixed points
$1$ (stable) and $-i$ (unstable), while the transformation $F_b$ has fixed
points $-1$ (stable) and $i$ (unstable). We have parameterized $F_a,F_b$ by
the values $q_a,q_b$ of $\dot{(F_i)}$ at stable points, so we have:
\begin{eqnarray*}
F_a&=&\frac{1}{2\sqrt{q_a}}
\begin{pmatrix}
1+q_a-i(1-q_a)&1-q_a+i(1-q_a)\\
1-q_a-i(1-q_a)&1+q_a+i(1-q_a)\\
\end{pmatrix}\\
F_b&=&\frac{1}{2\sqrt{q_b}}\begin{pmatrix}
1+q_b-i(1-q_b)&-1+q_b-i(1-q_b)\\
-1+q_b+i(1-q_b)&1+q_b+i(1-q_b)\\
\end{pmatrix}.
\end{eqnarray*}
Denote 
\[
Y=\{(q_a,q_b):q_a,q_b\in(0,1),\,\hbox{there exists a Möbius number system for $\{F_a,F_b\}$}\}.
\]

We wrote a C program that tries various pairs $(q_a,q_b)$, constructs
$F_a,F_b$, then computes the intervals $\{\oV_v:|v|\leq m\}$ (where $m$ is the
number of iterations to consider) and finally
checks whether these intervals cover the whole $\Te$. If they do, the program
puts a white
dot on the corresponding place in the graph, otherwise we leave it black. 

As we are interested in characterization, we have plotted (in gray) a second
set in the
graph: The set $U$ of all choices of $(q_a,q_b)$
such that the iterative system $\{F_a,F_b\}$ has a nontrivial inward set. The
formula for $U$, as shown in~\cite{Kurka-iterated}, is $U=\bigcup_{n\in\zet}
U_n$, where for $n>0$ we have:
\begin{eqnarray*}
U_0&=& U_{ab}\cap \left(0,\frac12\right)\times\left(0,\frac12\right)\\
U_n&=& U_{a^nb}\cap U_{a^{n+1}b}\cap
\left(\frac1{\sqrt[n]2},\frac1{\sqrt[n+1]2}\right)\times\left(0,\frac12\right)\\
U_{-n}&=&U_{ab^n}\cap U_{ab^{n+1}}\cap
\left(0,\frac12\right)\times\left(\frac1{\sqrt[n]2},\frac1{\sqrt[n+1]2}\right)
\end{eqnarray*}
and $U_v=\{(q_a,q_b):\hbox{$F_v$ is hyperbolic}\}$ for $v\in\Astar$. For practical reasons, we have
only drawn the sets $U_n$ with $|n|\leq m$ (the same $m$ as the number of
iterations in the first part of the program). The result for $m=10$ and
resolution $1000\times1000$ is shown in Figure~\ref{fig-depth}.

By Theorem~\ref{thminward}, $U\cap Y=\emptyset$. We are interested in the
size of the complement of $U\cup Y$ in $(0,1)^2$. It turns out that $U\cup Y$
covers most of the unit square and $U$ and $Y$ appear to fit rather well together. 
The area between the two sets is likely
to get smaller and smaller as we let $m$ grow,
possibly shrinking to zero in the limit. However, there might still exist points 
that do not belong either to $U$ or to $Y$. 
We (vaguely) conjecture, that $U\cup Y$ is equal to the whole $(0,1)^2$
perhaps up to a small set of exceptional points.
\begin{figure}
\begin{center}
\includegraphics[width=\textwidth]{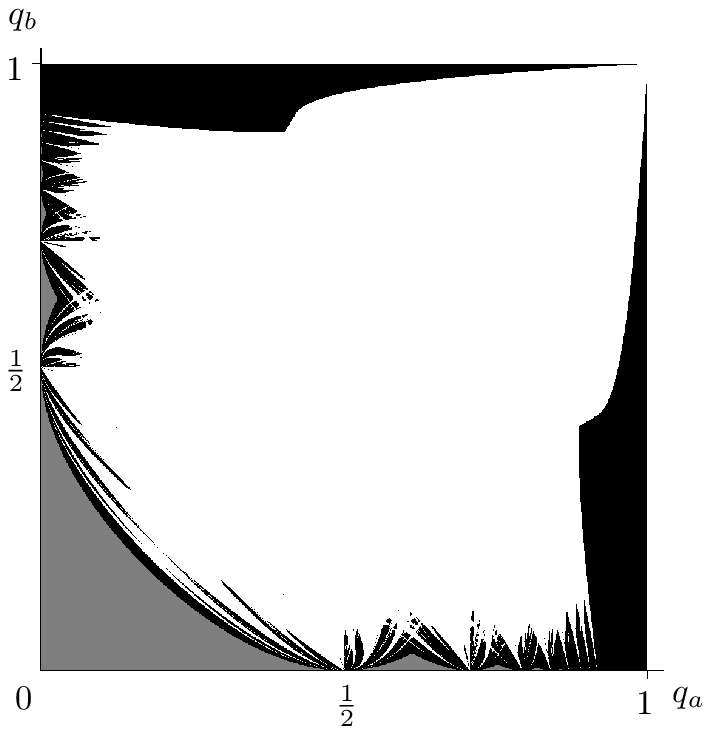}
\caption{The graph of $U$ and $Y$ for the depth $m=10$ and resolution
$1000\times1000$.}
\label{fig-depth}
\end{center}
\end{figure}

\subsection{Subshifts not admitting Möbius number systems}\label{secSubshifts}
An interesting question one can pose is whether, given a subshift
$\Sigma\subset\Aomega$, there exists a collection of MTs $\{F_a:a\in A\}$ such
that $\Sigma$ is a Möbius number system.

Trivially, the cardinality of $\Sigma$ must be precisely continuum, as for
smaller $\Sigma$ there is no projection from $\Sigma$ onto $\Te$. We offer a
less trivial necessary condition.

A (non-erasing) \emph{substitution} is any mapping $\psi:B\to \Aplus$. We
can extend $\psi$ to a map from $\Bomega$ to $\Aomega$ in a natural way. As
there is no risk of confusion, we will denote the resulting map by $\psi$ as
well and call it a \emph{substitution map}. Observe that
$\psi:\Bomega\to\Aomega$ is continuous in the product topology.
We have already met substitution maps in the proof of Corollary~\ref{corClosedCover2}.
 
\begin{theorem}\label{thm-nofull}
Let $\Sigma\subset \Aomega$ be a Möbius
number system for an iterative system $\{F_a:a\in A\}$.
Then for all alphabets $B$ and all substitution maps $\psi$ we have $\Sigma\neq \psi(\Bomega)$.
\end{theorem}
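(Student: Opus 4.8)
The plan is to reduce the statement to the single clean fact that the \emph{full} shift is never a M\"obius number system, and then to prove that fact from continuity alone.

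Suppose for contradiction that $\Sigma=\psi(\Bomega)$ for some alphabet $B$ and some substitution $\psi\colon B\to\Aplus$. For $b\in B$ set $M_b=F_{\psi(b)}$; as a composition of disc preserving transformations each $M_b$ is again a disc preserving MT, so $\{M_b:b\in B\}$ is a M\"obius iterative system. Put $\tilde\Phi=\Phi\circ\psi\colon\Bomega\to\Te$. I claim that $\Bomega$ (the full shift over $B$) is a M\"obius number system for $\{M_b:b\in B\}$ with representation map $\tilde\Phi$. Continuity of $\tilde\Phi$ is immediate, since $\psi$ is continuous and $\Phi_{|\Sigma}$ is continuous; and $\tilde\Phi(\Bomega)=\Phi(\Sigma)=\Te$, so $\tilde\Phi$ is onto (which already forces $\Bomega$ to be infinite, i.e.\ $|B|\ge 2$). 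For the domain condition, fix $u\in\Bomega$ and observe that the products $M_{u_{[0,j)}}=F_{\psi(u_0)\psi(u_1)\cdots\psi(u_{j-1})}$ are exactly a subsequence of the representing sequence $\{F_{\psi(u)_{[0,n)}}\}_{n}$ of the point $\Phi(\psi(u))$. A weak$^*$ convergent sequence of measures has the same limit along every subsequence, so $\{M_{u_{[0,j)}}\}_j$ represents $\Phi(\psi(u))=\tilde\Phi(u)$; hence $\tilde\Phi(u)$ is the point represented by $u$ in the new system.

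It remains to prove that the full shift $\Bomega$ cannot be a M\"obius number system. The key observation is that \emph{every cylinder maps onto the whole circle}. Using the basic relation $\tilde\Phi(\sigma u)=M_{u_0}^{-1}(\tilde\Phi(u))$ (noted after the definition of a M\"obius number system and justified by Corollary~\ref{corCompose}), for any $v\in\Bstar$ with $|v|=k$ and any $u\in[v]$ we get $\tilde\Phi(u)=M_v(\tilde\Phi(\sigma^k u))$; as $u$ ranges over $[v]$ its $k$-fold shift ranges over all of $\Bomega$, so $\tilde\Phi([v])=M_v(\tilde\Phi(\Bomega))=M_v(\Te)=\Te$, the last step because $M_v$ is disc preserving. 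Thus $\tilde\Phi([v])=\Te$ for \emph{every} finite word $v$.

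Finally I would contradict continuity. For each $N$ pick any $v\in B^N$. Since $\tilde\Phi([v])=\Te$ has diameter $\pi$ in the circle metric, I can choose $w^{N},\hat w^{N}\in[v]$ with $\rho(\tilde\Phi(w^{N}),\tilde\Phi(\hat w^{N}))\ge 1$. These two words agree on their first $N$ symbols, so $\rho(w^{N},\hat w^{N})\le 2^{-N}\to 0$. By compactness of $\Bomega$ I pass to a subsequence along which $w^{N}\to w^{\ast}$; then $\hat w^{N}\to w^{\ast}$ as well, and continuity of $\tilde\Phi$ gives $\rho(\tilde\Phi(w^{N}),\tilde\Phi(\hat w^{N}))\to 0$, contradicting the uniform lower bound $\ge 1$. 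This finishes the reduced case, and hence the theorem. I expect the only genuinely delicate point to be the reduction, specifically the verification that the block subsequence $\{M_{u_{[0,j)}}\}_j$ still represents $\tilde\Phi(u)$: this is what legitimises passing from the substitutive shift $\Sigma$ to the full shift $\Bomega$, and it rests squarely on the weak$^*$ definition of representation. Once the reduction is in place, the cylinder-image computation and the continuity argument are short and formal.
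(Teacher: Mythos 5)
Your proof is correct, and after the common opening move it takes a genuinely different route from the paper. Both arguments begin the same way: replace $\Sigma=\psi(\Bomega)$ by the full shift $\Bomega$ over $B$ with the composed maps $G_b=F_{\psi(b)}$, and both exploit the relation $\Phi(vz)=G_v(\Phi(z))$ together with the fact that on the full shift \emph{every} continuation is allowed (your subsequence/weak$^*$ justification that $\Bomega$ really is a number system for $\{G_b\}$ is in fact more careful than the paper, which asserts this step without proof). The divergence comes next. The paper uses the relation to show that $G_{u_{[0,k)}}(x)\to\Phi(u)$ for \emph{every} $x\in\Te$, specializes to periodic $u$ to conclude via the elliptic/parabolic/hyperbolic classification that every $G_w$, $w\in B^+$, must be parabolic, and then kills this possibility with a trace-and-eigenvector computation showing two parabolic maps with distinct fixed points cannot have all their products parabolic. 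You instead use the relation to show that $\tilde\Phi([v])=G_v(\tilde\Phi(\Bomega))=G_v(\Te)=\Te$ for every cylinder $[v]$, and then contradict continuity directly: $\tilde\Phi$ is continuous on the compact space $\Bomega$, hence uniformly continuous, yet cylinders of arbitrarily small diameter have image of diameter $\pi$. Your route is shorter and more elementary -- it needs no classification of M\"obius transformations and no matrix algebra, only point-set topology -- and it makes transparent exactly where fullness of the shift is used ($\sigma^k([v])=\Bomega$). What the paper's longer route buys is structural information that yours discards: the intermediate fact that in any putative counterexample the images $G_{u_{[0,k)}}(x)$ of \emph{every} point converge to $\Phi(u)$ (related to the pointwise-convergence questions raised in the paper's conclusions), and the observation that such a system would have to consist entirely of parabolic compositions sharing a fixed point.
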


\begin{proof}
Assume that there exist $B$ and $\psi$ for which the theorem is false.

Then $\Bomega$ together with the maps $\{G_b:b\in B\}$ such that
$G_b=F_{\psi(b)}$ is a Möbius number system. Denote by $\Phi$ the resulting
projection of $\Bomega$ to $\Te$ and observe that $\Phi=\Phi_{\Sigma}\circ\psi$
where $\Phi_{\Sigma}:\Sigma\to\Te$ is the number system on $\Sigma$. We see
that $\Phi$ is surjective and continuous on $\Bomega$.

Fix $x\in \Te$ and $u\in\Bomega$. There exists $v\in\Bomega$ with $\Phi(v)=x$.
Consider the sequence $\{\Phi(u_{[0,k)}v)\}_{k=1}^\infty$. We have
\[
\Phi(u_{[0,k)}v)=G_{u_{[0,k)}}(\Phi(v))=G_{u_{[0,k)}}(x).
\]
However, $\Phi$ is
continuous, so $\Phi(u_{[0,k)}v)$ tends to $\Phi(u)$ when $k$ tends to
infinity. Therefore, $G_{u_{[0,k)}}(x)\to \Phi(u)$.

As $x,u$ were arbitrary, we have shown that for every $u\in\Bomega$ and each
point $x$ of $\Te$ the sequence$\{G_{u_{[0,k)}}(x)\}_{k=1}^\infty$ converges to
$\Phi(u)$.

Let $u$ be periodic with some period $w\in\Bplus$. Then $\Phi(u)$ is the
(stable) fixed
point of $G_w$. Therefore $G_w$ may not be elliptic ($\Phi(u)$ would not be
defined) nor hyperbolic (the images of the unstable fixed point of $G_w$
would not converge). This means that for all $w\in\Bplus$ the 
transformation $G_w$ must be parabolic.

If all the the transformations $G_b,\, b\in B$ were parabolic with the same
fixed point then all the transformations $G_w, w\in B^+$ would be parabolic
with the same fixed point and we would not be able to represent anything except
this fixed point. Therefore there exist $a,b\in B$ such that $G_a,G_b$ have
different fixed points. The rest of the proof consists of a
straightforward (but technical) calculation that such a situation is impossible.

Choose $a,b\in B$ so that $G_a,G_b$ have different fixed points. We know that $G_{ab}$
and $G_{aab}$ must both be parabolic. Without loss of generality assume that
$G_a$ is similar to the matrix
$J=\begin{pmatrix}1&1\\0&1\end{pmatrix}$. Write $G_a=M J M^{-1}$
where $M$ is a regular matrix. Recall that a transformation $F$ is parabolic
iff $\Tr(F)^2=4$ and observe that:
\[\Tr(G_{ab})=\Tr(MJM^{-1}G_b)=\Tr(JM^{-1}G_bM)\]
and
\[\Tr(G_{aab})=\Tr(MJ^2M^{-1}G_b)=\Tr(J^2M^{-1}G_bM),\]
where we used the equality $\Tr(AB)=\Tr(BA)$.
Let 
\[
M^{-1}G_bM=\begin{pmatrix}
a&b\\
c&d\\
\end{pmatrix}
\]
and observe that $(a+d)^2=\Tr(G_b)^2=4$.

We now have
\[
JM^{-1}G_bM=\begin{pmatrix}
a+c&b+d\\
c&d\\
\end{pmatrix},\quad
J^2M^{-1}G_bM=\begin{pmatrix}
a+2c&b+2d\\
c&d\\
\end{pmatrix}.
\]
Calculating the traces, we obtain the equalities 
\[
(a+2c+d)^2=(a+c+d)^2=(a+d)^2=4
\]
that can only
be satisfied when $c=0$. But then matrices $J$ and $M^{-1}G_bM$ share the
eigenvector $(1,0)^T$ and so $G_a=M J M^{-1}$ and $G_b$ share the eigenvector
$M(1,0)^T$. However, eigenvectors of matrices are in one to one correspondence
with fixed points of Möbius transformations (see the proof of Lemma~\ref{lemFixedpoints} in
the Appendix) and so $G_a,G_b$ have the same fixed point, a contradiction.
\end{proof}

Theorem~\ref{thm-nofull} tells us in particular that there are no Möbius
number systems on the full shift. We must always take something smaller in
order to limit bad concatenations. An important weakness of
Theorem~\ref{thm-nofull} is that the set $\psi(\Bomega)$ \emph{need not be a
subshift}: while it is always closed, $\sigma$-invariance is not guaranteed.

However, there are cases when $\psi(\Bomega)$ is a nontrivial subshift. Consider
the Fibonacci shift $\Sigma_F$ defined on the alphabet $\{0,1\}$ by forbidding
the factor $11$. It is easy to see that $\Sigma_F=\psi(\{0,1\}^\omega)$ under
the substitution $\psi:0\mapsto 0, 1\mapsto10$. Therefore, $\Sigma_F$ can never
be a Möbius number system.

\subsection{Sofic Möbius number systems}\label{secSofic}
In this subsection, we will explore another facet of Möbius number systems. As
every number system is a subshift, we can ask how complicated (in the sense of
formal language theory, not information theory) is the language of this subshift.

A subshift $\Sigma$ is of \emph{finite type} if $\Sigma$ can be defined using a
finite set of forbidden words (note that this was the case in all our example
subshifts). A subshift $\Sigma$ is called \emph{sofic} if and only if the language
of $\Sigma$ is regular (recognizable by a finite automaton). A little thought
gives us that subshifts of finite type are always sofic. Sofic subshifts and
subshifts of finite type are quite popular in practice, as they are easier to
manipulate than general subshifts. There are numerous
results and algorithms available for sofic subshifts and subshifts of finite type.

The papers \cite{Kurka} and
\cite{Kurka2} contain several examples of Möbius number systems that are
subshifts of finite type. Furthermore, Proposition 5 in~\cite{Kurka2} states a
sufficient condition for a number system to be of finite type. We now 
present a similar condition for $\Sigma_\We$ to be sofic.

\begin{theorem}\label{thmSofic}
Let $\Sigma$ be a sofic subshift and $\We$ such an interval almost cover that
the set $\{F^{-1}_v(W_v):v\in\Astar\}$ is
finite. Then $\Sigma_\We$ is sofic.
\end{theorem}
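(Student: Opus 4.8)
The plan is to exhibit a finite labeled graph presenting $\Sigma_\We$, which is exactly what it means for $\Sigma_\We$ to be sofic (cf.\ \cite{Marcus}). The engine of the argument is the observation that emptiness of the refined sets $W_{w_{[0,n)}}$ is decided by a finite-state machine, and the finiteness hypothesis provides the states. For $v\in\Astar$ write $T_v=F_v^{-1}(W_v)$, so that $\{T_v:v\in\Astar\}$ is precisely the finite set from the hypothesis; call it $\mathcal{S}$. First I would record the recursion governing $\mathcal{S}$. Using $F_{va}=F_v\circ F_a$ together with the identity $W_{uv}=W_u\cap F_u(W_v)$ from the preliminaries, one computes
\begin{align*}
T_{va}=F_a^{-1}F_v^{-1}\bigl(W_v\cap F_v(W_a)\bigr)=F_a^{-1}\bigl(F_v^{-1}(W_v)\cap W_a\bigr)=F_a^{-1}(T_v\cap W_a).
\end{align*}
Thus reading the letter $a$ sends the state $T_v$ to $T_{va}$, and the new state depends only on the old state and on $a$ via the map $\delta(S,a)=F_a^{-1}(S\cap W_a)$. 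Since each $F_v$ is a bijection, $W_v\neq\emptyset$ iff $T_v\neq\emptyset$, so the condition defining $\Sigma_\We$ becomes: the orbit $T_\lambda,T_{w_0},T_{w_0w_1},\dots$ starting from $T_\lambda=\Te$ never reaches $\emptyset$.

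Next I would package this as a subshift. Let $\Pi=\{w\in\Aomega:W_{w_{[0,n)}}\neq\emptyset\text{ for all }n\}$, so that $\Sigma_\We=\Sigma\cap\Pi$. Define a finite labeled graph $H$ whose vertices are the nonempty members of $\mathcal{S}$, with an edge $S\xrightarrow{a}\delta(S,a)$ whenever $\delta(S,a)\neq\emptyset$. I claim $\Pi$ equals the set of label sequences of one-sided infinite walks in $H$ (write $X_H$ for this set), hence is sofic. The inclusion $\Pi\subseteq X_H$ is immediate: for $w\in\Pi$ the orbit from $T_\lambda$ stays nonempty and is a walk in $H$. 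For the reverse inclusion, the point to watch, and the one mild obstacle in the argument, is that a presentation admits walks from an \emph{arbitrary} start vertex, whereas membership in $\Pi$ refers to the specific start $T_\lambda=\Te$. This is repaired by monotonicity: $\delta(\cdot,a)$ preserves inclusion and $T_\lambda=\Te$ contains every state, so if $S_0\xrightarrow{w_0}S_1\xrightarrow{w_1}\cdots$ is any walk in $H$, an easy induction gives $T_{w_{[0,n)}}\supseteq S_n\neq\emptyset$ for all $n$, whence $w\in\Pi$.

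Finally, $\Sigma$ is sofic by hypothesis and $\Pi$ is sofic by the previous step, so $\Sigma_\We=\Sigma\cap\Pi$ is sofic because sofic subshifts are closed under intersection: one forms the label-product of a presentation of $\Sigma$ with $H$, and a walk in the product labeled $w$ is exactly a pair of walks (one in each factor) labeled $w$, so the product presents precisely $\Sigma\cap\Pi$. I expect the translation of the finiteness hypothesis into a finite automaton via the recursion $T_{va}=F_a^{-1}(T_v\cap W_a)$ to be the conceptual heart but routine once the recursion is found; the only step demanding genuine care is the arbitrary-start-vertex subtlety above, which the monotonicity of $\delta$ settles cleanly.
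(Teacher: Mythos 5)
Your proposal is correct and is essentially the paper's proof: the paper builds a finite automaton whose states are exactly your sets $T_v=F^{-1}_v(W_v)$ (denoted $Z_v$ there), with the identical transition rule $Z_{va}=F^{-1}_a(Z_v)\cap F^{-1}_a(W_a)$, and then intersects the resulting regular language with $\el(\Sigma)$ to conclude that $\el(\Sigma_\We)$ is regular. The only difference is packaging: the paper works with finite accepted words (initial state $Z_\lambda$, nonempty states accepting) and intersects languages, whereas you work with infinite walks and intersect sofic subshifts, which is what forces your monotonicity argument for arbitrary start vertices---an issue the paper's automaton avoids by designating $Z_\lambda$ as the initial state.
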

\begin{proof}
We construct a finite automaton $\mathcal A$ that recognizes all the words $v\in\Astar$ such
that $W_v\neq \emptyset$. We then intersect the resulting regular language with the
language  $\el(\Sigma)$ to obtain $\el(\Sigma_\We)$. Because regular
languages are closed under intersection, $\el(\Sigma_\We)$ is regular.

The states of our automaton will be all the sets $Z_v=F^{-1}_v(W_v),\,v\in
\Astar$. We let $Z_\lambda$ to be the initial state and all states except 
$\emptyset$ to be accepting states. A transition labelled by the letter $a$
leads from $Z_v$ to $Z_{va}$ for every $v\in\Astar$ and every $a\in A$. 

Observe that when $Z_v=Z_u$ then $Z_{va}=Z_{ua}$, so the definition of our
automaton is correct: 
\[
Z_{va}=F^{-1}_aF^{-1}_{v}(W_v\cap F_v(W_a))=F^{-1}_a(F^{-1}_{v}(W_v))\cap
F^{-1}_a(W_a)=F^{-1}_a(Z_v)\cap Z_a
\]
Similarly, $Z_{ua}=F^{-1}_a(Z_u)\cap Z_a$ and as $Z_u=Z_v$ we obtain
$Z_{va}=Z_{ua}$.

To finish the proof, we observe that the automaton $\mathcal A$
accepts the word $v$ iff $Z_v\neq \emptyset$. Because $Z_v\neq\emptyset$ iff
$W_v\neq\emptyset$, $\mathcal A$ recognizes precisely those $v\in\Astar$ with
$W_v\neq\emptyset$.
\end{proof}

Under an additional assumption, we can prove the converse of
Theorem~\ref{thmSofic}:  

\begin{theorem}\label{thmSofic2}
Assume that $\We$ is an interval almost cover compatible with the subshift
$\Sigma$. Let the subshift $\Sigma_\We$ be a sofic Möbius number system such
that $\Phi([v]\cap\Sigma_\We)=\oW_v$ for every word $v$.
Then the set $\{F^{-1}_v(W_v):v\in\Astar\}$ is finite.
\end{theorem}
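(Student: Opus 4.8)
The plan is to combine the standard characterization of sofic subshifts by their follower sets with the hypothesis $\Phi([v]\cap\Sigma_\We)=\oW_v$, which is exactly what lets us transfer information from the symbolic to the geometric side. Write $Z_v=F^{-1}_v(W_v)$, as in the proof of Theorem~\ref{thmSofic}; recall that these states satisfy $Z_{va}=F^{-1}_a(Z_v)\cap Z_a$, so everything reduces to showing that only finitely many distinct sets $Z_v$ occur. For $v\in\el(\Sigma_\We)$ let $\mathrm{Fol}(v)=\{w\in\Aomega:vw\in\Sigma_\We\}$ be its follower set. Since $\Sigma_\We$ is sofic, a standard result on sofic shifts (see \cite{Marcus}) guarantees that only finitely many distinct finite-word follower sets $\{u\in\Astar:vu\in\el(\Sigma_\We)\}$ occur; as each of these determines $\mathrm{Fol}(v)$ (a word $w$ lies in $\mathrm{Fol}(v)$ iff every prefix $vw_{[0,k)}$ lies in $\el(\Sigma_\We)$), only finitely many distinct $\mathrm{Fol}(v)$ occur.

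First I would establish the identity $\overline{Z_v}=\Phi(\mathrm{Fol}(v))$. Because $\Sigma_\We$ is shift-invariant we have $\mathrm{Fol}(v)=\sigma^{|v|}([v]\cap\Sigma_\We)\subset\Sigma_\We$, so $\Phi$ is defined on it. Iterating the relation $\Phi(\sigma(w))=F^{-1}_{w_0}(\Phi(w))$ gives $\Phi(w')=F^{-1}_v(\Phi(vw'))$ for every $w'\in\mathrm{Fol}(v)$. As $vw'$ ranges over $[v]\cap\Sigma_\We$, the value $\Phi(vw')$ ranges over $\oW_v$ by the hypothesis, whence
\[
\Phi(\mathrm{Fol}(v))=F^{-1}_v(\oW_v)=\overline{F^{-1}_v(W_v)}=\overline{Z_v},
\]
using that $F^{-1}_v$ is a homeomorphism. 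Therefore equal follower sets force equal closures $\overline{Z_v}$, and finiteness of the follower sets yields finitely many distinct closures $\overline{Z_v}$.

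It remains to recover the open set $Z_v$ from its closure, and I expect this to be the main obstacle: the functional $\Phi$ only ever sees the closed sets $\oW_v$, so the follower-set argument controls $\overline{Z_v}$ but not immediately $Z_v$ itself. Each $Z_v$ is a finite union of open intervals, so $Z_v\subset\Int(\overline{Z_v})$, and the two can differ only at finitely many interior ``gap'' points. The plan is to show $Z_v=\Int(\overline{Z_v})$: an interior point of $\overline{Z_v}$ missing from $Z_v$ would, via the refinement formula $W_v=\bigcap_i F_{v_{[0,i)}}(W_{v_i})$ together with the fact that each $F_{v_{[0,i)}}$ is a homeomorphism, force one of the generators $W_a$ to omit an interior point of an otherwise covered neighbourhood, i.e.\ $W_a\neq\Int(\oW_a)$. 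Thus once each $W_a$ is a union of open intervals with disjoint closures (which we may assume, and which holds for all the covers used in practice), no such gaps can arise in any refined set, so $Z_v=\Int(\overline{Z_v})$ and the finitely many closures produce finitely many $Z_v$. The delicate point is exactly this passage from the closed data available to $\Phi$ to the open sets $Z_v$; covers whose pieces share endpoints are where the argument must be handled with care, and it is precisely here that the hypothesis $\Phi([v]\cap\Sigma_\We)=\oW_v$ earns its keep.
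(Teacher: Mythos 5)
Your first two paragraphs are exactly the paper's proof: Myhill--Nerode gives finitely many finite-word follower sets, these determine the sets ${\mathcal F}^\omega(v)=\{w\in\Aomega:vw\in\Sigma_\We\}$, and the hypothesis $\Phi([v]\cap\Sigma_\We)=\oW_v$ combined with the relation $\Phi(w')=F^{-1}_v(\Phi(vw'))$ identifies $\Phi({\mathcal F}^\omega(v))$ with $F^{-1}_v(\oW_v)$, of which there are consequently only finitely many. Up to that point you have reproduced the paper's argument, and done so slightly more carefully, since you keep track of the closure.

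Your third paragraph is where the two texts part ways, and the issue you isolate there is real, not an artifact of your write-up: the follower-set argument controls only $F^{-1}_v(\oW_v)=\overline{F^{-1}_v(W_v)}$, whereas the statement concerns the open sets $F^{-1}_v(W_v)$, and infinitely many finite unions of open intervals can share one closure. The paper simply elides this point: its final chain of equalities ends with $F^{-1}_v(\Phi([v]\cap\Sigma_\We))=F^{-1}_v(W_v)$, although the hypothesis only yields $F^{-1}_v(\oW_v)$, so read literally the paper establishes finiteness of the family of closures. Your repair is mathematically sound as far as it goes: if every $W_a$ equals $\Int(\oW_a)$ (equivalently, no two component intervals of $W_a$ share an endpoint), this property passes to all refined sets, because a point with a neighbourhood inside $\overline{A\cap B}\subset\overline{A}\cap\overline{B}$ lies in $\Int(\overline{A})\cap\Int(\overline{B})=A\cap B$, and the property is preserved by the homeomorphisms $F_u$; then each $Z_v=\Int(\overline{Z_v})$ is recovered from its closure and you are done. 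The genuine gap is the parenthetical ``which we may assume'': replacing each $W_a$ by $\Int(\oW_a)$ produces a different interval almost cover, with a possibly larger $\Sigma_\We$ and different refined sets, and the hypotheses of the theorem (compatibility, soficity of $\Sigma_\We$, and $\Phi([v]\cap\Sigma_\We)=\oW_v$) are assumed for the original cover, not the modified one; even if they did transfer, the conclusion would concern the modified sets rather than the original $F^{-1}_v(W_v)$. So what you have actually proved is the theorem under the additional standing assumption that the components of each $W_a$ have pairwise disjoint closures. That is an extra hypothesis, not a reduction --- but it leaves your proof no less complete than the paper's own, which needs the same assumption, silently, at exactly the same step.
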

\begin{proof}
To prove this theorem we define a chain of several 
finite sets, each obtained from the previous, with the final set being $\{F^{-1}_v(W_v):v\in\Astar\}$.

Denote by ${\mathcal F}(v)$ the \emph{follower set of $v$ in $\Sigma_\We$}, i.e. the
set of all words $u\in\Astar$ such that $vu\in\el(\Sigma_\We)$. By the Myhill-Nerode
theorem, we have that if $\Sigma_\We$ is sofic, then $\{{\mathcal
F}(v):v\in\Astar\}$ is a finite set. Let $v\in\Astar$ and denote ${\mathcal
F}^\omega(v)=\{w\in\Aomega:\forall k, w_{[0,k)}\in{\mathcal F(v)}\}$.
The set $\{{\mathcal F}^\omega(v):v\in\Astar\}$ is finite as each
${\mathcal F}^\omega(v)$ depends only on ${\mathcal F}(v)$.

A little thought gives us that ${\mathcal
F}^\omega(v)=\{w\in\Aomega:vw\in\Sigma_\We\}$. Finally, denote
$Z_v=\Phi({\mathcal F}^\omega(v))$ and observe again that the set $\{Z_v:v\in\Astar\}$ is
finite. It remains to notice that
\[
Z_v=\Phi(\{w\in\Aomega:vw\in\Sigma_\We\})=F^{-1}_v(\Phi([v]\cap\Sigma_\We))=F^{-1}_v(W_v)
\]
to see that the set $\{F^{-1}_v(W_v):v\in\Astar\}$ must be finite.
\end{proof}

Note that Theorems~\ref{thmSofic} and \ref{thmSofic2} give us that if $\Sigma$
is sofic then the interval shifts considered in Theorem~\ref{thmClosedCover1}
or Corollary~\ref{corClosedCover2} are sofic if and only if
$\{F^{-1}_v(W_v):v\in \Astar\}$ is a finite set.

\section*{Conclusions and open problems} 
\addcontentsline{toc}{section}{Conclusions}
In the whole paper we have explored various topics in the theory of Möbius
number systems. We have obtained tools to prove that a subshift is a Möbius
number system for a given iterative system as well as various existence results
and criteria for sofic number systems. However, we have left quite a few open
problems, practical as well as theoretical, in this area.

The first open question is how ``nice'' the representation of points is for
sequences of the type $\{F_{w_{[0,n)}}\}_{n=1}^\infty$. In general, we can have
a sequence of Möbius transformations $\{F_n\}_{n=1}^\infty$ that represents
$x\in\Te$, yet for
all $z\in\Te$ the sequence $\{F_n(z)\}_{n=1}^\infty$ does not converge to $x$. We think that
such sequences must always exhibit low speed of convergence of
$\{F_n(0)\}_{n=1}^\infty$ to $x$. We further conjecture that in the more
special case of
$w\in \Aomega$ and $\{F_{w_{[0,n)}}\}_{n=1}^\infty$ representing some $x$, the
set of points $z\in\Te$ such that $F_{w_{[0,n)}}(z)$ converges to $x$ is nonempty, perhaps
even of measure one.

Interval shifts together with computing $Q(\We,\Sigma)$ and $Q(\Sigma)$ seem to
be useful when dealing with concrete examples. Theorem~\ref{thmClosedCover1}
and Corollary~\ref{thmClosedCover2} offer practical tools to prove that a given
subshift is a Möbius number system for a given iterative system. We wonder how
the available toolbox for this kind of proof could be further improved. We see
numerous areas open to incremental improvements.

For examples and applications, we would like to have a sufficient and necessary
condition for the existence of a Möbius number system for a given iterative
system. Ideally, this condition should be effectively verifiable (for
reasonable iterative systems, say when real and imaginary parts of coefficients
are rational). While we doubt that a general effective algorithm exists,
improvements in the tools for proving the existence of Möbius iterative systems
would be welcome indeed. 

Regarding iterative systems, we know that $Q(\We,\Sigma)>1$ or $Q(\Sigma)>1$
guarantees the existence of a number system while $Q(\We,\Sigma)<1$ or
$Q(\Sigma)<1$ means that if a number system $\Sigma_\We$ exists at all then it
is going to be badly behaved with respect to $\We$. We conjecture that when
$Q(\Sigma)<1$ there actually does not exist any number system on any interval
shift $\Sigma_\We\subset\Sigma$.

Again, we do not know if there exist general algorithms for computing $Q(\We,\Sigma)$
and $Q(\Sigma)$ even when $\Sigma$ is of finite type or sofic. Perhaps we
could gain some inspiration in the symbolic dynamic tools for computing entropy 
of shifts of finite type. Being able to compute, or at least estimate
$Q(\We,\Sigma)$ would make examining examples easier.

Another, perhaps less practical, but combinatorially interesting problem is
when a given subshift $\Sigma$ can be a Möbius number system. So far, we have
some sufficient and some necessary conditions and a large gap in between.

To manipulate number systems, it would be nice to have a sofic Möbius number
system. Theorems~\ref{thmSofic} and \ref{thmSofic2} offer useful checks to
perform when verifying if a number system is sofic. What is completely missing
is a condition, similar to Theorem~\ref{thmClosedCover1}, for the existence of a sofic number system for a given iterative
set. We are hoping that obtaining such a result is possible but the current
amount of knowledge on sofic number systems is rather small. For example, we
can not even tell whether existence of a Möbius number system implies
existence of a sofic system for the same iterative system or not. 

A large part the complexity of above problems seems to come not from the number
systems themselves but from the fact that we don't properly understand how do
large numbers of MTs compose (or, equivalently, how long sequences of matrices
multiply). This suggests that maybe the way forward lies in studying the limits
of products of matrices. Unfortunately, this area is full of hard questions,
see for example \cite{unsolvedblondel}.

Hard problems notwithstanding , we conclude on a positive note: Although there
are numerous open questions about Möbius number systems, and some properties of
these systems might turn out to be undecidable, current tools do allow us to
deal with systems that are likely to be used elsewhere (for example, the
continued fraction number system).

\section*{Acknowledgements}
\addcontentsline{toc}{section}{Acknowledgements}
The author thanks his supervisor Petr K\r urka for support and guidance.
The research was supported by the Czech Science Foundation research project 
GA\v{C}R 201/09/0854.
This text contains parts from author's previously published paper
\cite{Kazda}.

\section*{Appendix}
\addcontentsline{toc}{section}{Appendix}

The Appendix contains various proofs that we felt should be included in this
paper, yet their length or technical nature would disturb the flow of the rest
of the text. Note that these are all well known results; the proofs here are
just for the sake of completeness and understanding of the topic.

First, we present a series of three lemmas concerning the
classification of disc preserving MTs into elliptic, parabolic and hyperbolic
transformations.  We will use a bit of linear algebra machinery. We will
understand each disc preserving Möbius transformation $F$ both as a map and as the
corresponding normalized matrix
\[
F=\begin{pmatrix}
\alpha&\beta\\
\cc\beta&\cc\alpha\\
\end{pmatrix},\, |\alpha|^2-|\beta|^2=1.
\]

An important role in the following proofs belongs to the eigenvalues of the
matrix $F$. However, these eigenvalues are not
uniquely defined: The Möbius transformation $F$ always has two corresponding
normalized matrices $F,-F$, therefore it also has two different sets of
eigenvalues. We deal with this problem by always fixing one matrix of $F$ for
the whole proof.

\begin{lemma}\label{lemFixedpoints}
Let $F$ be a disc preserving Möbius transformation. Fix a matrix of $F$
such that $\det F=1$. Then the following holds:
\begin{enumerate}
\item $F$ is elliptic iff the eigenvalues of $F$ are not real iff
$F$ has one fixed point inside and one fixed point
outside of $\Te$ (the outside point might be $\infty$), 
\item $F$ is parabolic iff $F$ has the single eigenvalue equal to $1$ or $-1$ iff 
$F$ has a single fixed point and it lies on $\Te$,
\item $F$ is hyperbolic iff $F$ has two different real eigenvalues iff
$F$ has two different fixed points, both lying on $\Te$.
\end{enumerate}
\end{lemma}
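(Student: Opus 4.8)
The plan is to establish both equivalences in each of (1)--(3) by routing the classification through the eigenvalues of the fixed matrix. Write $F=\left(\begin{smallmatrix}\alpha&\beta\\\cc\beta&\cc\alpha\end{smallmatrix}\right)$ with $\det F=1$, so that $\Tr F=\alpha+\cc\alpha=2\Re\alpha$ is real and the characteristic polynomial is $\lambda^2-(\Tr F)\lambda+1$, whose discriminant is exactly $(\Tr F)^2-4$. This at once separates the three cases: a negative discriminant gives a non-real conjugate pair of eigenvalues, each of modulus $1$ because their product equals $\det F=1$; a zero discriminant gives a double eigenvalue $\lambda$ with $\lambda^2=1$, hence $\lambda=\pm1$; and a positive discriminant gives two distinct real eigenvalues. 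This disposes of the trace-versus-eigenvalue equivalence in all three statements.

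Next I would set up the eigenvector--fixed-point dictionary that the paper cites later. A direct substitution shows that if $(v_1,v_2)^{T}$ is an eigenvector of $F$ then $v_1/v_2\in\Ce$ is fixed by $F$, and conversely every fixed point arises this way; since eigenvectors for distinct eigenvalues are linearly independent, they yield distinct points. Hence in the elliptic and hyperbolic cases the two distinct eigenvalues give exactly two distinct fixed points. In the parabolic case I would note that the eigenspace of the double eigenvalue must be one-dimensional, for otherwise $F=\lambda E=\pm E$ and $F$ would be the identity transformation, which the definition of the three types excludes; thus a parabolic $F$ has a single fixed point. This settles the fixed-point counts.

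The remaining, and most delicate, task is to locate the fixed points relative to $\Te$. When $\beta=0$ the map is the rotation $z\mapsto(\alpha/\cc\alpha)z$ with fixed points $0$ and $\infty$, and $(\Tr F)^2=4(\Re\alpha)^2<4$ unless $F=\id$, so this is precisely the elliptic case with one point inside and the other ($\infty$) outside. When $\beta\neq0$ the fixed points solve $\cc\beta z^2+(\cc\alpha-\alpha)z-\beta=0$; the key computation is that the discriminant of this quadratic is $(\cc\alpha-\alpha)^2+4|\beta|^2=4\bigl(|\beta|^2-(\Im\alpha)^2\bigr)$, which, using $|\alpha|^2-|\beta|^2=1$, equals $(\Tr F)^2-4$ once more. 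The product of the roots is $-\beta/\cc\beta$, of modulus $1$, so the two fixed points are either both on $\Te$ or inverse to each other across $\Te$ (one inside, one outside). Writing $z_\pm=(2i\Im\alpha\pm\sqrt{(\Tr F)^2-4})/(2\cc\beta)$ I would evaluate $|z_\pm|^2$ directly: in the hyperbolic case the root is real and the numerator collapses to $4|\beta|^2$, so $|z_\pm|=1$; in the parabolic case the unique root satisfies $|z|^2=(\Im\alpha)^2/|\beta|^2=1$; and in the elliptic case the root is purely imaginary, so $|z_+|\neq|z_-|$, which together with the modulus-one product forces one point strictly inside and one strictly outside. The hard part is exactly this elliptic subcase: the separation $|z_+|\neq|z_-|$ relies on observing that $\Im\alpha\neq0$ (a consequence of $(\Im\alpha)^2>|\beta|^2\ge0$), without which the two roots could share a modulus.
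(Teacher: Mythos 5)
Your proof is correct, but the way you locate the fixed points relative to $\Te$ is genuinely different from the paper's argument. You share the first two stages with the paper: the characteristic polynomial $\lambda^2-(\Tr F)\lambda+1$ giving the trace--eigenvalue trichotomy, the dictionary between eigenvectors $(z,1)^T$ (or $(1,0)^T$ for $\infty$) and fixed points, and the observation that a two-dimensional eigenspace would force $F=\pm E=\id$. Where you diverge is the geometric heart of the lemma. You solve the fixed-point equation $\cc\beta z^2+(\cc\alpha-\alpha)z-\beta=0$ explicitly, verify the identity $(\cc\alpha-\alpha)^2+4|\beta|^2=(\Tr F)^2-4$, and read off the moduli of the roots case by case (with the rotation case $\beta=0$ split off separately). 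The paper instead exploits a structural symmetry of matrices of the form $\left(\begin{smallmatrix}\alpha&\beta\\\cc\beta&\cc\alpha\end{smallmatrix}\right)$: if $(v_1,v_2)^T$ is an eigenvector for $\lambda$, then $(\cc v_2,\cc v_1)^T$ is an eigenvector for $\cc\lambda$. For real $\lambda$ the two vectors lie in the same one-dimensional eigenspace, and their linear dependence is exactly the condition $\det\left(\begin{smallmatrix}z&1\\1&\cc z\end{smallmatrix}\right)=0$, i.e.\ $|z|=1$; for non-real $\lambda$ they are independent, which rules out $|z|=1$, and the same symmetry shows the fixed-point set is closed under the circle inversion $z\mapsto 1/\cc z$, pairing an inside point with an outside one. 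Your computation buys explicit formulas for the fixed points and makes the coincidence of the two discriminants completely transparent (including, implicitly, the inversion relation $z_+\cc z_-=1$, which you get from Vieta but do not need); its cost is the case analysis and the careful handling of the elliptic subcase via $\Im\alpha\neq 0$, which you correctly identified and justified as the delicate point. The paper's route avoids all coordinates of the roots and needs no case split on $\beta$, at the price of being less explicit. Both proofs establish only one implication per class and then invoke the mutual exclusivity and exhaustiveness of the three cases to obtain the stated equivalences, which is legitimate.
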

\begin{proof}
We begin by providing a connection between $(\Tr F)^2$ and the eigenvalues of $F$.
The characteristic polynomial of $F$ is:
\[
(\alpha-\lambda)(\cc\alpha-\lambda)-\beta\cc\beta=|\alpha|^2-|\beta|^2-\Tr
F\cdot\lambda+\lambda^2=1-\Tr F\cdot \lambda+\lambda^2.
\]
We see that for $(\Tr F)^2<4$, $F$ has two distinct complex conjugate
eigenvalues, while if $(\Tr F)^2>4$, then $F$ has two distinct real
eigenvalues. Finally, if $(\Tr F)^2=4$, there is only one eigenvalue 
$\lambda=1$ or $\lambda=-1$. 

Observe that for all $z\in\ce$ such that $F(z)\neq \infty$ we have:
\[
F\cdot 
\begin{pmatrix} 
z \\
1 \\
\end{pmatrix}
=
\begin{pmatrix} 
\alpha z+\beta \\
\cc\beta z+\cc\alpha\\
\end{pmatrix}
=
(\cc\beta z+\cc\alpha)\cdot
\begin{pmatrix} 
F(z) \\
1 \\
\end{pmatrix}.
\]
For $z\in\ce$ we obtain that $z$ is a fixed point of $F$ iff $(z,1)^T$ is an
eigenvector of $F$. Similarly, the point $\infty$ is fixed iff $(1,0)^T$ is an
eigenvector of $F$.

Let $\lambda$ be an eigenvalue of $F$. If $F$ is not the identity, then the
eigenspace of $\lambda$ must have dimension $1$ (otherwise all the points of
$\Ce$ would be fixed points of $F$). Therefore, we have a one to one
correspondence between the eigenvalues and fixed points of $F$.

Let $v=(v_1,v_2)^T$ be an eigenvector corresponding to the eigenvalue $\lambda$. Then:
\begin{eqnarray*}
F\cdot
\begin{pmatrix} 
v_1 \\
v_2 \\
\end{pmatrix}
&=&
\begin{pmatrix} 
\alpha v_1+\beta v_2 \\
\cc\beta v_1+\cc\alpha v_2\\
\end{pmatrix}
=
\begin{pmatrix} 
\lambda
v_1 \\
\lambda
v_2 \\
\end{pmatrix}
=\lambda
\begin{pmatrix} 
v_1 \\
v_2 \\
\end{pmatrix}
\\
F\cdot
\begin{pmatrix} 
\cc v_2 \\
\cc v_1 \\
\end{pmatrix}
&=&
\begin{pmatrix} 
\beta \cc v_1 + \alpha \cc v_2 \\
\cc\alpha \cc v_1+ \cc\beta \cc v_2\\
\end{pmatrix}
=
\begin{pmatrix} 
\cc{\cc\beta v_1 +\cc \alpha v_2} \\
\cc{\alpha  v_1+ \beta  v_2}\\
\end{pmatrix}
=
\begin{pmatrix} 
\cc{\lambda v_2} \\
\cc{\lambda v_1} \\
\end{pmatrix}
=
\cc\lambda
\begin{pmatrix} 
\cc v_2 \\
\cc v_1 \\
\end{pmatrix}.\\
\end{eqnarray*}
If $\lambda$ is real, then the vectors $(v_1,v_2)^T$ and $(\cc v_2,\cc v_1)^T$
must be linearly dependent as they both belong to the same eigenspace. In
particular, if $v_2=0$ we would have $v_1=0$, so we can assume that $v_1=z,
v_2=1$. But then the linear dependence is equivalent with
\[
\det
\begin{pmatrix} 
z&1\\
1&\cc z\\
\end{pmatrix}=0
\] which is equivalent with $|z|=1$ and so $z\in \Te$.

On the
other hand, if $\lambda$ is not real then $\cc\lambda\neq\lambda$ and the
vectors $(v_1,v_2)^T$ and $(\cc v_2,\cc v_1)^T$ must be linearly independent as
they are eigenvectors of different eigenvalues. Assume $z\in\ce$ is a fixed
point of $F$.  Were $|z|=1$ then the determinant argument above would give us
a contradiction with linear independence. Therefore $z\not\in \Te$. But there
is more: If $z$ is a fixed point of $F$ then so is $\frac1{\cc z}$, the image
of $z$ under circle inversion with respect to $\Te$:
\[
F\cdot 
\begin{pmatrix}
\frac 1{\cc z}\\
1
\end{pmatrix}
=
\frac1{\cc z}
F\cdot 
\begin{pmatrix}
 1\\
\cc z\\
\end{pmatrix}
=
\frac1{\cc z}
\cc\lambda
\begin{pmatrix}
 1\\
\cc z
\end{pmatrix}
=
\cc\lambda
\begin{pmatrix}
\frac 1{\cc z}\\
1
\end{pmatrix}.
\]
Similarly, if $\infty$ is a fixed point then so is $0$. Therefore, if $F$ has
an eigenvalue that is not real, then $F$ has one fixed point outside $\Te$ and
one inside $\Te$ (and we can even map one onto another using the circle
inversion with respect to $\Te$). This is precisely the case of elliptic $F$.

On the other hand, if $F$ is hyperbolic, then $F$ has two distinct real
eigenvalues and, therefore, two distinct fixed points on $\Te$.

If $F$ is elliptic then there is a single real eigenvalue of $F$ and so the Möbius
transformation $F$ has a single fixed point on $\Te$.
\end{proof}

\begin{lemma}\label{lemFixedPoints1}
Assume $F$ is a hyperbolic transformation, $x_1$ and $x_2$ its
fixed points. Then $F'(x_1)=\lambda_2/\lambda_1$ and 
$F'(x_2)=\lambda_1/\lambda_2$ where $\lambda_1,\lambda_2$ are the
eigenvalues of $F$ associated to $x_1$ and $x_2$.

Similarly, if $F$ is a parabolic transformation and $x$ its fixed point
then $F'(x)=1$.
\end{lemma}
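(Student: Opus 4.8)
The plan is to combine the explicit formula for the derivative of a normalized Möbius transformation with the eigenvector--fixed point correspondence already established in the proof of Lemma~\ref{lemFixedpoints}. Recall that for $F=\left(\begin{smallmatrix}\alpha&\beta\\\cc\beta&\cc\alpha\end{smallmatrix}\right)$ with $\det F=1$ we have $F'(z)=\frac{1}{(\cc\beta z+\cc\alpha)^2}$ (this is the general formula $F'(z)=\det F/(cz+d)^2$ specialized to the disc preserving case, and it matches the expression for $\dot F$ given after Lemma~\ref{lemUniForm}). The key observation is that the quantity $\cc\beta z+\cc\alpha$ in the denominator is exactly the eigenvalue attached to $z$ when $z$ happens to be a fixed point.

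First I would make this precise. From the identity
\[
F\cdot\begin{pmatrix}z\\1\end{pmatrix}=(\cc\beta z+\cc\alpha)\begin{pmatrix}F(z)\\1\end{pmatrix}
\]
derived in the proof of Lemma~\ref{lemFixedpoints}, setting $z=x$ with $F(x)=x$ gives $F\cdot(x,1)^T=(\cc\beta x+\cc\alpha)(x,1)^T$. Hence $(x,1)^T$ is an eigenvector of $F$ with eigenvalue $\lambda=\cc\beta x+\cc\alpha$, and this is precisely the eigenvalue the statement associates to $x$. Substituting into the derivative formula yields the clean relation $F'(x)=\lambda^{-2}$.

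For the hyperbolic case I would then use that the product of the two eigenvalues equals the determinant, so $\lambda_1\lambda_2=\det F=1$. Therefore $\lambda_2=1/\lambda_1$, and
\[
F'(x_1)=\frac{1}{\lambda_1^2}=\frac{\lambda_2}{\lambda_1},
\]
with the symmetric computation giving $F'(x_2)=\lambda_1/\lambda_2$. Since $\lambda_1,\lambda_2$ are real in the hyperbolic case (Lemma~\ref{lemFixedpoints}), this is a genuine equality of real numbers, not merely of moduli. For the parabolic case, Lemma~\ref{lemFixedpoints} tells us $F$ has the single eigenvalue $\lambda=\pm1$ attached to its unique fixed point $x$, so $F'(x)=\lambda^{-2}=1$ immediately.

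The computation is short, so there is no serious obstacle; the only point requiring care is the bookkeeping that ensures the eigenvalue $\cc\beta x+\cc\alpha$ read off from the denominator is the same $\lambda_i$ the statement refers to, and that the normalization $\det F=1$ is genuinely used, so that $\lambda_2/\lambda_1$ and $\lambda_1^{-2}$ coincide exactly rather than up to a scalar.
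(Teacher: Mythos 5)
Your proof is correct, but it takes a genuinely different route from the paper. The paper conjugates $F$ to a Jordan-type matrix $J$, writes $F = M\circ J\circ M^{-1}$, uses the chain rule to reduce $F'(x)$ to $J'(M^{-1}(x))$, and then must choose the representative $J$ carefully (e.g.\ $\left(\begin{smallmatrix}1&0\\\mp1&1\end{smallmatrix}\right)$ in the parabolic case) precisely so that $M^{-1}(x)\neq\infty$ and the derivative of the model map can be evaluated at $0$. You instead work directly from the formula $F'(z)=\det F/(\cc\beta z+\cc\alpha)^2$ and the identity $F\cdot(x,1)^T=(\cc\beta x+\cc\alpha)(x,1)^T$, which identifies the denominator at a fixed point with the eigenvalue attached to that fixed point, giving $F'(x)=\lambda^{-2}$ in one step; the stated ratios then follow from $\lambda_1\lambda_2=\det F=1$ (and $\lambda=\pm1$ in the parabolic case). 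Your argument is shorter and makes the eigenvalue--multiplier relation transparent, and it sidesteps the point-at-infinity bookkeeping entirely because for disc preserving $F$ the fixed points lie on $\Te$ by Lemma~\ref{lemFixedpoints}, so $(x,1)^T$ is a legitimate eigenvector and $\cc\beta x+\cc\alpha\neq0$ (otherwise $F(x)=\infty\neq x$) --- two small points worth stating explicitly. What the paper's conjugation approach buys in exchange is reuse: the same normal forms $J$ and the decomposition $F=M\circ J\circ M^{-1}$ are exactly what drives the dynamical statement $F^n(z)\to x$ in Lemma~\ref{lemFixedPoints2}, so the machinery is set up once and used twice, whereas your computation, while cleaner for this lemma in isolation, does not by itself yield that convergence result.
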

\begin{proof}
First observe that the ratio of $\lambda_1$ and
$\lambda_2$ does not depend on the choice of the matrix for $F$ so the claim is
sensible.

We will fix a normalized matrix corresponding to $F$.
Let $J$ be the Jordan matrix 
similar to the matrix $F$, i.e. then exists an MT $M$ such
that $F=M\circ J\circ M^{-1}$.

Let $x$ be a fixed point of $F$. Then we have:
\[
F'(x)=(M\circ J\circ M^{-1})'(x)=M'(J\circ M^{-1}(x))\cdot J'(M^{-1}(x))\cdot
(M^{-1})'(x).
\]
Because $F(x)=x$, we must have $J\circ M^{-1}(x)=M^{-1}(x)$, so:
\[
F'(x)=M'(M^{-1}(x))\cdot J'(M^{-1}(x))\cdot (M^{-1})'(x)=J'(M^{-1}(x)),
\]
where we used the formula $M'(M^{-1}(x))\cdot (M^{-1})'(x)=1$.

However, $J'(M^{-1}(x))$ is easy to compute because $M^{-1}(x)$ is the fixed
point of $J$ corresponding to the same eigenvalue as $x$. The only problem is that
we have to ensure $M^{-1}(x)\neq \infty$ to have the derivative well defined.
We deal with this problem by carefully choosing our $J$.

Let us begin with the hyperbolic case. To calculate, $x_1$ choose the Jordan
matrix $J=\left(\begin{smallmatrix}
\lambda_2&0\\
0&\lambda_1\\
\end{smallmatrix}\right)$ (note the switched order of $\lambda_1,\lambda_2$).
Now $J(z)=\frac{\lambda_2}{\lambda_1}z$ and $M^{-1}(x_1)=0$. Thus
$F'(x_1)=J'(0)=\frac{\lambda_2}{\lambda_1}$. Similarly, we calculate
$F'(x_2)=\frac{\lambda_1}{\lambda_2}$ from
the Jordan form $J=\left(\begin{smallmatrix}
\lambda_1&0\\
0&\lambda_2\\
\end{smallmatrix}\right)$.

In the parabolic case, we avoid problems with the point at $\infty$, by taking
a matrix similar to the usual Jordan form but with different interpretation as
a Möbius transformation:
$J=\left(\begin{smallmatrix}
1&0\\
\mp1&1\\
\end{smallmatrix}\right)$ (the sign in the lower left corner depends on the
Jordan matrix for $F$; there are two possibilities). Now
$J(z)=\frac{z}{\mp z+1}$ and $M^{-1}(x)=0$, so $F'(x)=J'(0)=1$ and we
are done.
\end{proof}

\begin{lemma}\label{lemFixedPoints2}
Let $F$ be a parabolic or a hyperbolic transformation, let $x$ be the (stable) fixed
point of $F$. Let $z\in\Ce$ (assume that $z$ is not the unstable fixed point of $F$ in
the hyperbolic case). Then $\lim_{n\to\infty}F^n(z)=x$.
\end{lemma}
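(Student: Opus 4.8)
The plan is to reduce everything to the iteration of a Jordan normal form by conjugation, exactly as in the proof of Lemma~\ref{lemFixedPoints1}, and then to exploit the fact that every M\"obius transformation is a homeomorphism of the complex sphere $\Ce$. Concretely, I would fix a normalized matrix for $F$ and write $F=M\circ J\circ M^{-1}$ for a suitable regular MT $M$ and a Jordan matrix $J$. Since composition of MTs corresponds to matrix multiplication, $F^n=M\circ J^n\circ M^{-1}$, so that $F^n(z)=M\bigl(J^n(M^{-1}(z))\bigr)$. Because $M$ is continuous on $\Ce$, it then suffices to analyse the orbit of $M^{-1}(z)$ under $J$ and push the limit through $M$.

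In the hyperbolic case $F$ has two distinct real eigenvalues $\lambda_1,\lambda_2$ with $\lambda_1\lambda_2=\det F=1$. Taking a diagonal $J$ with these entries, the transformation $J$ acts on $\Ce$ by $w\mapsto\kappa w$, where $\kappa$ is the ratio of the diagonal entries, and its fixed points are $0$ and $\infty$. Ordering the entries so that $|\kappa|<1$ (possible since $\lambda_1\lambda_2=1$ forces $|\lambda_1|\neq|\lambda_2|$), the point $0$ becomes the attracting fixed point of $J$; by the multiplier computation of Lemma~\ref{lemFixedPoints1} the derivative at a fixed point is a conjugacy invariant, so $M(0)$ is precisely the stable fixed point $x$ and $M(\infty)$ the unstable one. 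If $z$ is not the unstable fixed point of $F$ then $M^{-1}(z)\neq\infty$, i.e. $M^{-1}(z)=w$ is finite, and $J^n(w)=\kappa^n w\to 0$; continuity of $M$ gives $F^n(z)=M(\kappa^n w)\to M(0)=x$.

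In the parabolic case $F$ has a single eigenvalue $\pm1$, and I would take $J=\left(\begin{smallmatrix}1&1\\0&1\end{smallmatrix}\right)$ (the sign being irrelevant after normalisation), which acts as the translation $w\mapsto w+1$ with unique fixed point $\infty$. For every finite $w$ we have $J^n(w)=w+n\to\infty$, so $F^n(z)=M(M^{-1}(z)+n)\to M(\infty)=x$ for every $z$ with $M^{-1}(z)\neq\infty$, that is for every $z\neq x$; the case $z=x$ is trivial since $x$ is fixed. This yields convergence for all $z\in\Ce$, matching the statement that in the parabolic case there is no excluded point.

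The only genuinely delicate point is bookkeeping the stability conventions: I must ensure that the attracting fixed point of $J$ corresponds under $M$ to the point the excerpt calls the stable fixed point $x$, and that the hypothesis ``$z$ is not the unstable fixed point'' translates exactly into $M^{-1}(z)\neq\infty$. Both follow from the invariance of the multiplier at a fixed point established in Lemma~\ref{lemFixedPoints1}, together with the sign convention $F'(x_1)<1<F'(x_2)$ recorded in the Remark after Definition~\ref{defMT}. Everything else is routine: the existence of the conjugating $M$ and the elementary limits $\kappa^n w\to0$ and $w+n\to\infty$ on $\Ce$.
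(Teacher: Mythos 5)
Your proof is correct and takes essentially the same route as the paper: conjugate $F$ to its Jordan form $J$ via $F^n=M\circ J^n\circ M^{-1}$, compute the orbit of $J^n$ explicitly (translation in the parabolic case, multiplication in the hyperbolic case), and push the limit through the continuous map $M$. The only cosmetic differences are that the paper works in homogeneous coordinates (showing the first component of $J^n\cdot M\cdot(z,1)^T$ tends to infinity while the second stays bounded) and identifies the limit point through the eigenvector correspondence $M^{-1}(\infty)=x$, whereas you normalize the attracting fixed point of $J$ to $0$ in the hyperbolic case and identify $M(0)=x$ via conjugacy invariance of the multiplier; both bookkeeping devices are sound.
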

\begin{proof}
Let us again fix a matrix of $F$ such that $\det F=1$.
Denote by $J$ the Jordan matrix similar to $F$. 
Then $F=M\cdot J \cdot M^{-1}$ and $F^n=M\cdot J^n\cdot M^{-1}$. We know that
$J^n$ has one of the two possible forms:
\[
\begin{pmatrix}
1&\pm n\\
0&1\\
\end{pmatrix}, \hbox{ or}\;
\begin{pmatrix}
\lambda_1^n&0\\
0&\lambda_2^n\\
\end{pmatrix}
\]
where without loss of generality $|\lambda_1|>1>|\lambda_2|$.
Let $z\in\Ce$ (if $F$ is hyperbolic, let $z$ be different from the unstable
fixed point of $F$)
and consider the vector
\[
F^n\begin{pmatrix}z\\1\end{pmatrix}=M^{-1}\cdot J^n\cdot
M\cdot \begin{pmatrix}z\\1\end{pmatrix}.
\]

It is easy to see that the first component of $J^n\cdot M\cdot \left(\begin{smallmatrix}z\\1\end{smallmatrix}\right)$ tends to infinity, while the
second is bounded. Therefore (understanding $J$ and $M$ as Möbius
transformations), we have $\lim_{n\to\infty} J^n\circ M(z)=\infty$.

However, $M^{-1}(\infty)=x$ as $(x,1)^T$ and $(1,0)^T$ are eigenvectors of $F$
and $J$ belonging to the same eigenvalues. It follows that  
\[
\lim_{n\to\infty} M^{-1}\circ J^n\circ M(z)=M^{-1}(\infty)=x.\qedhere
\]
\end{proof}

We conclude the Appendix with two miscellaneous lemmas: a lemma proving
continuity of a certain linear functional and Fekete's lemma about
superadditive series.

The following lemma is a consequence of the Riesz representation theorem. See
(\cite[page 184]{ash} for details). In its statement, we are going to identify
$C^*(\Te,\er)$ with the space of signed Radon measures on $\Te$.
\begin{lemma}\label{lemContinuity}
Let $E$ be a measurable set on $\Te$. Then the map
$\alpha:\lambda\mapsto \lambda(E)$ from $C^*(\Te,\er)$ to $\er$ is
linear and continuous on $C^*(\Te,\er)$.
\end{lemma}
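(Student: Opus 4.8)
The plan is to verify linearity directly from the definition of the vector space operations on measures, and then to reduce continuity to a single boundedness estimate. The key point is to make explicit which topology is intended: since the Remark preceding Definition~\ref{defRep} places $\alpha$ in $C^{**}(\Te,\er)$, continuity here means continuity with respect to the \emph{norm} topology on the dual space $C^*(\Te,\er)$. Under the Riesz identification of $C^*(\Te,\er)$ with the space of finite signed Radon measures, this dual norm is the total variation norm $\|\lambda\|=|\lambda|(\Te)$, where $|\lambda|=\lambda^+ +\lambda^-$ comes from the Hahn--Jordan decomposition $\lambda=\lambda^+-\lambda^-$. Once this is recognised, the result is the standard bound $|\lambda(E)|\le|\lambda|(\Te)$.

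Linearity is immediate: for signed Radon measures $\lambda_1,\lambda_2$ and a scalar $c\in\er$, the definition of addition and scalar multiplication of measures gives $(\lambda_1+c\lambda_2)(E)=\lambda_1(E)+c\,\lambda_2(E)$, so $\alpha(\lambda_1+c\lambda_2)=\alpha(\lambda_1)+c\,\alpha(\lambda_2)$. For continuity it then suffices to show that the linear functional $\alpha$ is bounded. Using the Jordan decomposition, for any measurable $E\subset\Te$ we estimate
\[
|\alpha(\lambda)|=|\lambda(E)|=|\lambda^+(E)-\lambda^-(E)|\leq \lambda^+(E)+\lambda^-(E)=|\lambda|(E)\leq |\lambda|(\Te)=\|\lambda\|.
\]
Hence $\alpha$ has operator norm at most $1$, and a bounded linear functional on a normed space is continuous, which is exactly the claim.

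I do not expect a genuine obstacle in this lemma; the only step requiring any care is the identification of the relevant norm. Everything else is routine once one invokes the total variation characterisation of the dual norm supplied by the Riesz representation theorem (\cite[page 184]{ash}), which is precisely the reference attached to the statement. Thus the whole argument amounts to the elementary inequality $|\lambda(E)|\le|\lambda|(\Te)$ together with the trivial additivity of $E$-evaluation.
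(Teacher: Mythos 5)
Your proof is correct, and at heart it rests on the same inequality as the paper's: $|\lambda(E)|\le\|\lambda\|$, obtained by splitting $\lambda$ into positive parts. The difference is in what each argument takes as given. You invoke the full isometric identification from the Riesz representation theorem --- that the dual norm on $C^*(\Te,\er)$ equals the total variation norm $|\lambda|(\Te)$ --- and then the Hahn--Jordan decomposition makes the estimate a one-line computation. The paper instead works directly with the functional-analytic definition $\|\lambda\|=\sup\{\lambda(f):f\in C(\Te,\er),\,|f|\le 1\}$ and uses the Riesz construction of the positive parts, namely $\lambda_1(f)=\sup\{\lambda(g):0\le g\le f\}$ and $\lambda_2(f)=\sup\{\lambda(g):-f\le g\le 0\}$, to derive $\|\lambda\|\ge\lambda_1(\Te)\ge\lambda_1(E)\ge\lambda(E)$ and symmetrically $\|\lambda\|\ge-\lambda(E)$. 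So the paper never needs the statement that the dual norm \emph{is} the total variation norm; it only needs the identification of $C^*(\Te,\er)$ with signed measures as sets, and proves the required bound from scratch. Your route buys brevity at the cost of citing a stronger form of the representation theorem; the paper's buys self-containedness at the cost of a slightly more technical decomposition. Both are legitimate, and your identification of the relevant topology (norm continuity on $C^*(\Te,\er)$, which is what membership in $C^{**}(\Te,\er)$ requires in the Remark's application) is exactly right.
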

\begin{proof}
Linearity of $\alpha$ is obvious. To obtain continuity, it is enough to show
that $|\alpha(\lambda)|$ is bounded whenever $|\lambda|$ is bounded. By
definition,
\[
|\lambda|=\sup\{\lambda(f):f\in C(\Te,\er),|f|\leq 1\}.
\]

We first observe that
every $\lambda$ can be written as $\lambda_1-\lambda_2$ where
$\lambda_1,\lambda_2$ are positive measures. Moreover, as shown in \cite{ash},
we can choose $\lambda_1, \lambda_2$ so that 
\begin{eqnarray*}
\lambda_1(f)&=&\sup\{\lambda(g):0\leq g\leq f\}\\
\lambda_2(f)&=&\sup\{\lambda(g):-f\leq g\leq 0\}
\end{eqnarray*}
for all $f\geq 0$.

Then we have
\[
|\lambda|\geq \sup\{\lambda(f):\, 0 \leq f\leq
1\}=\lambda_1(1)=\lambda_1(\Te)\geq \lambda_1(E)\geq \lambda(E)
\]
as well as
\[
|\lambda|\geq \sup\{\lambda(f):\, -1 \leq f\leq0\}
=\lambda_2(1)=\lambda_2(\Te)\geq \lambda_2(E)\geq -\lambda(E).
\]
Therefore, $|\lambda|\geq |\lambda(E)|=|\alpha(\lambda)|$ for every 
$\lambda$, proving the continuity of $\alpha$.
\end{proof}

\begin{lemma}[Fekete's lemma]\label{lemSupadditive}
Let $\{R_n\}_{n=1}^\infty$ be a \emph{superadditive sequence} of real numbers, that is, a
sequence such that for all $m,n\in\en$ we have $R_{m+n}\geq R_m+R_n$. Then the limit
$\lim_{n\to\infty}\frac1n R_n$ exists and is equal to $\sup_{n\in\en} \frac1n
R_n$.
\end{lemma}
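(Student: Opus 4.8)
The plan is to set $L=\sup_{n\in\en}\frac{R_n}{n}$ (allowing $L=+\infty$) and to show that both the limit superior and the limit inferior of the sequence $\{\frac{R_n}{n}\}$ equal $L$. The inequality $\limsup_{n\to\infty}\frac{R_n}{n}\le L$ is immediate, since every term of the sequence is by definition at most its supremum $L$. All the work lies in establishing the reverse bound $\liminf_{n\to\infty}\frac{R_n}{n}\ge L$, and it is here that superadditivity is used.

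For the lower bound I would fix an arbitrary index $m$ and compare $R_n$ for large $n$ with $R_m$. Iterating superadditivity first gives $R_{qm}\ge qR_m$ for every $q\in\en$ by a trivial induction on $q$ (the step being $R_{(q+1)m}=R_{qm+m}\ge R_{qm}+R_m$). Writing any $n\ge m$ via the division algorithm as $n=qm+r$ with $0\le r<m$, a further application of superadditivity yields $R_n\ge qR_m+R_r$ when $1\le r<m$ (and $R_n\ge qR_m$ when $r=0$). Dividing by $n$, I obtain
\[
\frac{R_n}{n}\ge \frac{qm}{n}\cdot\frac{R_m}{m}+\frac{R_r}{n}.
\]

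The key observation is that as $n\to\infty$ with $m$ held fixed, we have $q=\lfloor n/m\rfloor\to\infty$ and $\frac{qm}{n}=\frac{n-r}{n}\to 1$, since the remainder $r$ stays bounded by $m$. Moreover $R_r$ ranges over only the finitely many values $R_1,\dots,R_{m-1}$, hence is bounded, so $\frac{R_r}{n}\to 0$. Passing to the limit inferior therefore gives $\liminf_{n\to\infty}\frac{R_n}{n}\ge\frac{R_m}{m}$, and since $m$ was arbitrary, taking the supremum over $m$ yields $\liminf_{n\to\infty}\frac{R_n}{n}\ge L$. Combined with the easy upper bound this forces $\lim_{n\to\infty}\frac{R_n}{n}=L$.

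The main obstacle — indeed the only subtle point — is controlling the remainder term uniformly, namely ensuring that the finitely many boundary values $R_r$ do not spoil the limit; this is exactly what the boundedness of $\{R_1,\dots,R_{m-1}\}$ provides. The argument also covers the case $L=+\infty$ without change: given any bound $M$, one chooses $m$ with $\frac{R_m}{m}>M$, and the same estimate shows $\liminf_{n\to\infty}\frac{R_n}{n}\ge\frac{R_m}{m}>M$, so the limit is $+\infty$ as required.
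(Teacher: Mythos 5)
Your proof is correct and takes essentially the same route as the paper: both arguments write a large index via the division algorithm as a multiple of a chosen block plus a bounded remainder, apply superadditivity, and let the remainder term wash out in the limit, treating the case $\sup_n \frac{R_n}{n}=+\infty$ by the same estimate. The only cosmetic differences are that you fix an arbitrary $m$, prove $\liminf_{n\to\infty}\frac{R_n}{n}\ge\frac{R_m}{m}$, and take the supremum at the end, whereas the paper fixes an $\epsilon$-near-optimal block $n$ and controls the remainder via the bound $R_m\ge mR_1$ instead of via the boundedness of the finitely many values $R_1,\dots,R_{m-1}$.
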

\begin{proof}
Denote $s=\sup_{n\in\en} \frac1n R_n$. 

Assume first $s<\infty$. Let $\epsilon>0$ and suppose that $n$ is such that $\frac1n R_n>s-\epsilon$. From the
superaditivity condition, we obtain for any $k\in \en$ and any $m<n$ the
inequalities: 
\begin{eqnarray*}
\frac1{kn+m}R_{kn+m}&\geq&\frac1{kn+m}\left( k R_n+mR_1\right)\\
&\geq& \frac{kn(s-\epsilon)+mR_1}{kn+m}\\
&\geq& \left(1-\frac1{k+1}\right)(s-\epsilon)+\frac{m}{kn+m}R_1
\end{eqnarray*}
In particular, there exists $k$ such that whenever $l>kn$, the value
$\frac{1}{l}R_l$ belongs to the interval $[s-2\epsilon,s]$. Therefore, the
sequence $\{\frac1n R_n\}_{n=1}^\infty$ converges to $s$.

If $s=\infty$, replacing $s-\epsilon$ with arbitrarily large $K>0$ and
performing the same argument gives us that there exist $n$ and $k$ such that
$l>kn$ implies $\frac1l R_l\geq K-1$, proving the lemma.
\end{proof}

\addcontentsline{toc}{section}{Bibliography}
\bibliography{citations}

\end{document}